        \title{On proofs of the Farrell-Jones Conjecture}
       \author{Bartels, A.}
       \address{Westf\"alische Wilhelms-Universit\"at M\"unster\\
               Mathematisches Institut\\
               Einsteinstr.~62,
               D-48149 M\"unster, Germany}
        \email{a.bartels@uni-muenster.de}
        \urladdr{http://www.math.uni-muenster.de/u/bartelsa} 
         \date{March 2013}
     \keywords{Farrell-Jones Conjecture, 
            $K$- and $L$-theory of group rings.}
    \subjclass{18F25, 19A31, 19B28, 19G24}
  \DeclareMathAlphabet{\matheurm}{U}{eur}{m}{n}
  \newcommand{\IC}{\mathbb{C}}
  \newcommand{\IN}{\mathbb{N}}
  \newcommand{\IQ}{\mathbb{Q}}
  \newcommand{\IR}{\mathbb{R}}
  \newcommand{\IZ}{\mathbb{Z}}
  \newcommand{\cala}{\mathcal{A}}
  \newcommand{\calb}{\mathcal{B}}
  \newcommand{\calc}{\mathcal{C}}
  \newcommand{\calf}{\mathcal{F}}
  \newcommand{\calh}{\mathcal{H}}
  \newcommand{\calo}{\mathcal{O}}
  \newcommand{\calu}{\mathcal{U}}
  \newcommand{\bfK}{{\mathbf K}}
  \newcommand{\ignore}[1]{}
  \theoremstyle{plain}
  \newtheorem{theorem}{Theorem}[section]
  \newtheorem{lemma}[theorem]{Lemma}
  \newtheorem{proposition}[theorem]{Proposition}
  \newtheorem{conjecture}[theorem]{Conjecture}
  \newtheorem{ABCtheorem}{Theorem}
  \theoremstyle{definition}
  \newtheorem{definition}[theorem]{Definition}
  \newtheorem*{definition*}{Definition}
  \theoremstyle{remark}
  \newtheorem{remark}[theorem]{Remark}  
  \newtheorem{convention}[theorem]{Convention}  
  \newtheorem{example}[theorem]{Example}
  \newtheorem{notation}[theorem]{Notation}
\let\c@equation=\c@theorem\makeatother
  \DeclareMathOperator{\Ab}{Ab}
  \DeclareMathOperator{\All}{All}
  \DeclareMathOperator{\Cyc}{Cyc}
  \DeclareMathOperator{\Fin}{Fin}
  \DeclareMathOperator{\id}{id}
  \DeclareMathOperator{\Idem}{Idem}
  \DeclareMathOperator{\pt}{pt}
  \DeclareMathOperator{\pr}{pr}
  \DeclareMathOperator{\res}{res}
  \DeclareMathOperator{\supp}{supp}
  \DeclareMathOperator{\Sw}{Sw}
  \DeclareMathOperator{\GL}{GL}
  \DeclareMathOperator{\Wh}{Wh}
  \DeclareMathOperator{\VCyc}{VCyc}
  \newcommand{\ab}{{\mathit{ab}}}
  \newcommand{\CAT}{\operatorname{CAT}}
  \newcommand{\ch}{{\mathit{ch}}}
  \newcommand{\dd}{{\partial}}
  \newcommand{\e}{{\varepsilon}}
  \newcommand{\fol}{{\mathit{fol}}}
  \newcommand{\FS}{{\mathit{FS}}}
  \newcommand{\hfd}{{\mathit{hfd}}}
  \newcommand{\ox}{{\otimes}}
  \newcommand{\sing}{{\mathit{sing}}}
  \newcommand{\Vcalf}{{V \hspace{-.5ex}\calf}}
  \newcommand{\x}{{\times}}
\begin{document}

  \maketitle

  \begin{abstract}
    These notes contain an introduction
    to proofs of Farrell-Jones Conjecture for 
    some groups and are based on talks given in Ohio,
    Oxford, Berlin, Shanghai, M\"unster and Oberwolfach in 2011 and 2012. 
  \end{abstract}
  
 % \begin{center}
 %  \Large \bf Preliminary version --- comments welcome!
 % \end{center}

  \section*{Introduction}

  Let $R$ be a ring and $G$ be a group.
  The Farrell-Jones Conjecture~\cite{Farrell-Jones(1993a)} 
  is concerned with the
  $K$- and $L$-theory of the group ring $R[G]$.
  Roughly it says that the $K$- and $L$-theory of $R[G]$ is determined
  by the $K$- and $L$-theory of the rings $R[V]$ where $V$ 
  varies over the family of virtually cyclic subgroups of $G$ 
  and group homology.
  The conjecture is related to a number of other conjectures
  in geometric topology and $K$-theory, most prominently the
  Borel Conjecture. 
  Detailed discussions of applications and the formulation of
  this conjecture (and related conjectures) can be found 
  in~\cite{Bartels-Lueck-Reich(2008appl),Lueck(2009Hangzhou),
               Lueck(2010asph),Lueck(2011ICM),Lueck-Reich(2005)}.
  
  These notes are aimed at the reader who is already convinced 
  that the Farrell-Jones Conjecture is a worthwhile conjecture
  and is interested in recent 
  proofs~\cite{Bartels-Farrell-Lueck(2011cocomlat), 
       Bartels-Lueck(2012annals), Bartels-Lueck-Reich(2008hyper)} 
  of instances of this Conjecture.
  In these notes I  discuss aspects or special cases 
  of these proofs that I think are important and illustrating.
  The discussion is based on talks given over the last two years. 
  It will be much more informal than the actual proofs in the 
  cited papers, but I tried to provide more details than I usually do in talks.
  I took the liberty to express opinion 
  in some remarks; the reader is encouraged to disagree with me. 
  The cited results all build on the seminal work of Farrell and Jones 
  surrounding their conjecture, in particular, their introduction of the
  geodesic flow as a tool 
  in $K$- and $L$-theory~\cite{Farrell-Jones(1986a)}.
  Nevertheless, I will not assume that the reader is already 
  familiar with the methods developed by Farrell and Jones.
  
  A brief summary of these notes is as follows.
  Section~\ref{sec:statement-of-conjecture} contains a 
  brief discussion of the statement of the conjecture.
  The reader is certainly encouraged to 
  consult~\cite{Bartels-Lueck-Reich(2008appl),Lueck(2009Hangzhou),
               Lueck(2010asph),Lueck(2011ICM),Lueck-Reich(2005)} 
  for much more details, motivation and background.
  Section~\ref{sec:controlled-algebra} contains a short
  introduction to geometric modules  that is sufficient for
  these notes.
  Three axiomatic results, labeled 
  Theorems~\ref{thm:transfer-red-strict},
    ~\ref{thm:transfer-red-homotopy} and~\ref{thm:Farrell-Hsiang-groups}, 
  about the Farrell-Jones Conjecture are formulated in 
  Section~\ref{sec:thms-ABC}. 
  Checking for a group $G$ the assumptions of these results 
  is never easy.
  Nevertheless,
  the reader is encouraged to find further applications of them.
  In Section~\ref{sec:on-proof-thm-transfer-strict} an outline of 
  the proof of Theorem~\ref{thm:transfer-red-strict} is given.
  Section~\ref{sec:flow-spaces} describes the role of flows in 
  proofs of the Farrell-Jones Conjecture.
  It also contains a discussion of the flow space for
  $\CAT(0)$-groups.
  Finally, in Section~\ref{sec:Z^nxZ-is-Farrell-Hsiang} 
  an application of Theorem~\ref{thm:Farrell-Hsiang-groups}
  to some groups of the form $\IZ^n \rtimes \IZ$ is discussed.

  \subsection*{Acknowledgement}
  I had the good fortune to learn from and work with great 
  coauthors on the Farrell-Jones Conjecture; everything discussed
  here is taken from these cooperations. 
  I thank Daniel Kasprowski, Sebastian Meinert, 
  Adam Mole, Holger Reich, Mark Ullmann and 
  Christoph Winges for helpful comments on an earlier version of
  these notes.
  The work described here was supported by the 
  Sonderforschungsbereich 878 -- Groups, Geometry \& Actions.

  \section{Statement of the Farrell-Jones Conjecture}
    \label{sec:statement-of-conjecture}

  \subsection*{Classifying spaces for families}

  Let $G$ be a group. 
  A family of subgroups of $G$ is a non-empty 
  collection $\calf$ of subgroups of $G$
  that is closed under conjugation and taking subgroups.
  Examples are the family $\Fin$ of finite subgroups, the family $\Cyc$ of
  cyclic subgroups, the family of virtually cyclic subgroups $\VCyc$,
  the family $\Ab$ of abelian subgroups,
  the family $\{ 1 \}$ consisting of only the trivial subgroup and
  the family $\All$ of all subgroups. 
  If $\calf$ is a family, then the collection $\Vcalf$ of all 
  $V \subseteq G$ which contain a member of $\calf$ as a finite index
  subgroup is also a family.
  All these examples are closed under abstract isomorphism, but this
  is not part of the definition.
  If $G$ acts on a set $X$ then $\{ H \leq G \mid X^H \neq \emptyset \}$
  is a family of subgroups.

  \begin{definition} \label{def:E_calf-G}
    A $G$-$CW$-complex $E$ is called a classifying space for the family 
    $\calf$, if $E^H$ is non-empty and contractible for all
    $H \in \calf$ and empty otherwise.
  \end{definition}

  Such a $G$-$CW$-complex always exists and is unique up to 
  $G$-equivariant homotopy equivalence.
  We often say such a space  $E$ is a model for $E_\calf G$;
  less precisely we simply write $E = E_\calf G$ for such a space.
  
  \begin{example} \label{ex:E_Vcaf-G-as-full-simpl-cx}
    Let $\calf$ be a family of subgroups.
    Consider the $G$-set $S := \coprod_{F \in \calf} G/F$.
    The full simplicial complex $\Delta(S)$ spanned by $S$ (i.e., the 
    simplicial complex that contains a simplex for every non-empty 
    finite subset of $S$) carries a simplicial $G$-action.
    The isotropy groups of vertices of $\Delta(S)$ are all
    members of $\calf$, but for an arbitrary point of $\Delta(S)$ 
    the isotropy group will only contain a member of $\calf$ as a finite
    index subgroup.
    The first barycentric subdivision of $\Delta(S)$ is a $G$-$CW$-complex
    and it is not hard to see that it is a model for $E_{\Vcalf} G$.

    This construction works for any $G$-set $S$ such that
    $\calf = \{  H \leq G \mid S^H \neq \emptyset \}$.
  \end{example}

  More information about classifying spaces for families 
  can be found in~\cite{Lueck(2005s)}.
  
  \subsection*{Statement of the conjecture}
  The original formulation of the Farrell-Jones 
  Conjecture~\cite{Farrell-Jones(1993a)} used homology  
  with coefficients in stratified and twisted $\Omega$-spectra.
  We will use the elegant formulation of the conjecture 
  developed by Davis and L\"uck~\cite{Davis-Lueck(1998)}.
  Given a ring $R$ and a group $G$ Davis-L\"uck construct a 
  homology theory
  for $G$-spaces
  \begin{equation*}
    X \mapsto H^G_*( X ; \bfK_R ) 
  \end{equation*}
  with the property that $H^G_*( G/H ; \bfK_R ) = K_*(R[H])$.

  \begin{definition} \label{def:alpha_calf}
    Let $\calf$ be a  family of subgroups of $G$.
    The projection $E_\calf G \twoheadrightarrow G/G$ to the one-point
    $G$-space $G/G$ induces the $\calf$-assembly map
    \begin{equation*}
      \alpha_\calf \colon H^G_*(E_\calf G; \bfK_R) \to 
        H^G_*(G/G;\bfK_R) = K_*(R[G]).
    \end{equation*}
  \end{definition}
  
  \begin{conjecture}[Farrell-Jones Conjecture]
    \label{conj:FJ-Conj}
    For all groups $G$ and all rings $R$ the assembly map $\alpha_{\VCyc}$
    is an isomorphism.
  \end{conjecture}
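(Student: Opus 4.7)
The plan is to attack Conjecture~\ref{conj:FJ-Conj} through the axiomatic framework that Theorems~\ref{thm:transfer-red-strict},~\ref{thm:transfer-red-homotopy} and~\ref{thm:Farrell-Hsiang-groups} will (once stated) provide. Each asserts that $\alpha_{\VCyc}$ is an isomorphism for a group $G$ equipped with a certain combinatorial/dynamical structure, either a transfer reducibility datum over $\VCyc$ or a Farrell-Hsiang style system of finite quotients. The strategy is twofold: extend the class of groups for which such a structure can be exhibited, and combine this with inheritance properties of the conjecture in order to bootstrap the verification from an ``atomic'' subclass to all groups.

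First, I would assemble the inheritance toolkit for $\alpha_{\VCyc}$: closure under directed colimits of groups and injective directed systems; passage to subgroups and finite index overgroups; group extensions $1 \to N \to G \to Q \to 1$ when $N$ and the preimages under $G \to Q$ of virtually cyclic subgroups of $Q$ both satisfy the conjecture; and fundamental groups of graphs of groups with virtually cyclic edge groups. Most of this package is available in the literature. The aim is that a sufficiently rich inheritance package reduces Conjecture~\ref{conj:FJ-Conj} to a comparatively narrow class of ``indecomposable'' groups, i.e.\ those that cannot be broken apart by any of the above operations.

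Next, for each such indecomposable $G$ I would attempt to produce the dynamical input required by the axiomatic results: a compact $G$-space $X$ together with a flow space $(FS,\Phi)$ carrying a proper isometric $G$-action, such that long $\Phi$-orbits admit equivariant open covers of bounded dimension whose isotropy lies in $\VCyc$. For $\CAT(0)$ and hyperbolic groups one takes the Gromov boundary and the space of generalised geodesics (this is exactly what Section~\ref{sec:flow-spaces} will describe); for Farrell-Hsiang type groups one uses congruence quotients in place of a flow. The conceptual goal is to manufacture one of these two kinds of data from arbitrary group-theoretic input.

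The hard part, and the reason Conjecture~\ref{conj:FJ-Conj} is still open, is precisely this last step. There is no known procedure that produces a flow space and transfer structure out of a bare group presentation, and the inheritance toolkit described above does not reach groups that resist any geometric realisation: infinite torsion groups of bounded exponent, Thompson-like groups, finitely generated simple groups with no non-positively curved geometry, and various ``monster'' constructions. Completing the plan therefore requires a genuinely new source of $G$-equivariant covers of bounded dimension with $\VCyc$ isotropy---perhaps via measured or asymptotic group theory, via an $\ell^2$-theoretic substitute for controlled algebra, or via a strengthened inheritance theorem that forces these pathological groups into the range of the existing flow-space technology. Without such an input, the plan terminates at a large but proper subclass of groups rather than at the universally quantified statement, and I would expect the bulk of the proof-writing effort to be spent trying, and presumably failing in some cases, to produce that input.
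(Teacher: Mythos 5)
This statement is a \emph{conjecture}, not a theorem: the paper offers no proof of it, and none exists anywhere. Your proposal correctly recognises this and is, in effect, an accurate survey of the known partial strategy rather than a proof. Your description of that strategy matches the paper's: the axiomatic Theorems~\ref{thm:transfer-red-strict}, \ref{thm:transfer-red-homotopy} and~\ref{thm:Farrell-Hsiang-groups} establish $\alpha_{\VCyc}$ is an isomorphism for groups carrying a transfer-reducibility or Farrell-Hsiang structure; the inheritance package (the transitivity principle of Proposition~\ref{prop:transitivity}, extensions as in Proposition~\ref{prop:inheritance-for-extensions}, stability under directed colimits) extends the class; and flow spaces (Section~\ref{sec:flow-spaces}) or congruence quotients (Section~\ref{sec:Z^nxZ-is-Farrell-Hsiang}) supply the required dynamical input for hyperbolic, $\CAT(0)$ and virtually poly-cyclic groups.

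The gap is the one you yourself identify in your final paragraph, and it is not a repairable defect in your argument but the reason the conjecture remains open: there is no known procedure for producing, from an arbitrary group, either a transfer space with an almost-equivariant map to a finite-dimensional $(G,\VCyc)$-complex or a Farrell-Hsiang system of finite quotients, and the inheritance properties do not bootstrap from the known cases to all groups. One small caution on your toolkit: the Farrell-Jones Conjecture is \emph{not} known to pass to finite-index overgroups in general (this is a well-known open problem for the version without coefficients; with coefficients one has closure under subgroups and the other operations you list, but overgroups of finite index are subtler), so even the inheritance package is slightly weaker than you state. Your proposal is an honest and essentially correct account of the state of the art, but it is not, and cannot currently be made into, a proof of the universally quantified statement.
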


  \begin{remark}
    Farrell-Jones really only conjectured this for $R = \IZ$.
    Moreover, they wrote (in 1993) that they regard this and 
    related conjectures 
    \emph{only as estimates which best fit the known data at this time}.
    It still fits all known data today.
 
    For arbitrary rings the conjecture was formulated 
    in~\cite{Bartels-Farrell-Jones-Reich(2004)}.
    The proofs discussed in this article all work for arbitrary rings
    and it seems unlikely that the conjecture holds for $R = \IZ$ 
    and all groups, but not for arbitrary rings.
  \end{remark}

  \begin{remark} \label{rem:conj-as-K=0}
    Let $\calf$ be a family of subgroups of $G$.
    If  $R$ is a ring such that
    $K_* R[F] = 0$ for all $F \in \calf$, 
    then $H_*^G(E_\calf G;\bfK_R) = 0$.

    In particular, the Farrell-Jones Conjecture predicts the following:
    if $R$ is a ring such that $K_*(R[V]) = 0$ for all $V \in \VCyc$
    then $K_* (R[G]) = 0$ for all groups $G$.
  \end{remark}
  
  \subsection*{Transitivity principle}
  The family in the Farrell-Jones Conjecture is fixed to be the 
  family of virtually cyclic groups.
  Nevertheless, it is beneficial to keep the family flexible, because 
  of the following transitivity 
  principle~\cite[A.10]{Farrell-Jones(1993a)}. 
  
  \begin{proposition}
    \label{prop:transitivity}
    Let $\calf \subseteq \calh$ be families of subgroups of $G$.
    Write $\calf \cap H$ for the family of subgroups of $H$ that
    belong to $\calf$.
    Assume that
    \begin{enumerate}
    \item \label{prop:transitivity:calg} 
      $\alpha_\calh \colon H^G_* ( E_\calh G; \bfK_R) \to K_*(R[G])$
      is an isomorphism,
    \item \label{prop:transitivity:calf} 
      $\alpha_{\calf \cap H} \colon H^H_* ( E_{\calf \cap H} H; \bfK_R) 
                   \to K_*(R[H])$ is an isomorphism for all $H \in \calh$.
    \end{enumerate}
    Then $\alpha_\calf \colon H^G_* ( E_\calf G; \bfK_R) \to K_*(R[G])$
    is an isomorphism.
  \end{proposition}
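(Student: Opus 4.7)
The plan is to factor $\alpha_\calf$ through $\alpha_\calh$ and use hypothesis (a) to reduce to a statement that can be checked orbit-by-orbit against hypothesis (b). Because $\calf \subseteq \calh$, every isotropy group of $E_\calf G$ lies in $\calh$, so the universal property of $E_\calh G$ produces a $G$-map $\phi \colon E_\calf G \to E_\calh G$, unique up to $G$-homotopy. Functoriality of $H^G_*(-;\bfK_R)$ then gives a commutative triangle with $\alpha_\calf = \alpha_\calh \circ \phi_*$, and since $\alpha_\calh$ is an isomorphism by (a), it suffices to show that $\phi_*$ is an isomorphism.

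To analyze $\phi_*$, I would consider the natural transformation of $G$-equivariant homology theories
\[
\tau_X \colon H^G_*(X \x E_\calf G; \bfK_R) \longrightarrow H^G_*(X; \bfK_R)
\]
induced by the projection $E_\calf G \to G/G$. Two standard features of the Davis-L\"uck setup do most of the work: both sides are genuine $G$-equivariant homology theories in the variable $X$, and there is an induction isomorphism $H^G_*(G \x_H Y; \bfK_R) \cong H^H_*(Y; \bfK_R)$ for any subgroup $H \leq G$ and any $H$-$CW$-complex $Y$. Together these imply, via a standard cellular induction using Mayer-Vietoris and filtered colimits, that $\tau_X$ is an isomorphism for every $G$-$CW$-complex $X$ whose isotropy groups lie in $\calh$, provided $\tau_{G/H}$ is an isomorphism for every $H \in \calh$.

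For such an $H$, combining the homeomorphism $G/H \x E_\calf G \cong G \x_H E_\calf G$ with the induction isomorphism identifies $\tau_{G/H}$ with the map
\[
H^H_*(E_\calf G; \bfK_R) \longrightarrow H^H_*(H/H; \bfK_R) = K_*(R[H]),
\]
where $E_\calf G$ is regarded as an $H$-space by restriction. For $K \leq H$ the fixed set $(E_\calf G)^K$ is contractible precisely when $K \in \calf$ (equivalently $K \in \calf \cap H$) and is empty otherwise, so this restricted $E_\calf G$ is a model for $E_{\calf \cap H} H$; hence $\tau_{G/H}$ is exactly $\alpha_{\calf \cap H}$, an isomorphism by (b). Now apply the reduction to $X = E_\calh G$, whose isotropy lies in $\calh$: we obtain that $\tau_{E_\calh G}$ is an isomorphism. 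Finally, the product $E_\calh G \x E_\calf G$ is itself a model for $E_\calf G$, because its $K$-fixed set $(E_\calh G)^K \x (E_\calf G)^K$ is contractible for $K \in \calf$ and empty otherwise (using $\calf \subseteq \calh$). Under this identification $\tau_{E_\calh G}$ becomes $\phi_*$, which is therefore an isomorphism.

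The main obstacle, to the extent that there is one, is not conceptual but bookkeeping: one must keep track of which incarnation of $E_\calf G$ is in play at each step, and one must invoke two structural properties of $H^G_*(-;\bfK_R)$ — that it is a $G$-equivariant homology theory satisfying the induction isomorphism along subgroup inclusions. Both are built into the Davis-L\"uck construction, so once they are available the argument is essentially formal.
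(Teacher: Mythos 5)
Your proof is correct. The paper does not supply its own proof of Proposition~\ref{prop:transitivity}, instead citing~\cite[A.10]{Farrell-Jones(1993a)}, and your argument is essentially the standard one from that source and from L\"uck--Reich: factor $\alpha_\calf = \alpha_\calh \circ \phi_*$, show $\phi_*$ is an isomorphism by reducing (via the homology-theory axioms and the induction isomorphism) to the check on orbits $G/H$ with $H \in \calh$, and identify $\tau_{G/H}$ with $\alpha_{\calf \cap H}$ by observing that $\res^G_H E_\calf G$ is a model for $E_{\calf \cap H}H$; the final step that $E_\calh G \times E_\calf G$ is a model for $E_\calf G$ and that under this identification the projection represents $\phi$ is exactly the device used in the literature.
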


  \begin{remark} \label{rem:transitivity-for-K=0}
    The following illustrates the transitivity principle.

    Assume that $R$ is a ring such that $K_*(R[F]) = 0$ for 
    all $F \in \calf$.
    Assume moreover that the assumptions of 
    Proposition~\ref{prop:transitivity} are satisfied.
    Combining Remark~\ref{rem:conj-as-K=0} 
    with~\ref{prop:transitivity:calf}
    we conclude $K_*(R[H]) = 0$ for all $H \in \calh$.
    Then combining Remark~\ref{rem:conj-as-K=0} 
    with~\ref{prop:transitivity:calg} it follows that
    $K_*(R[G]) = 0$.
  \end{remark}

  \begin{remark} \label{rem:induction-GL_nZ}
    The transitivity principle can be used to prove the Farrell-Jones
    Conjecture for certain classes by induction.
    For example the proof
    of the Farrell-Jones Conjecture for
    $\GL_n(\IZ)$ uses an induction on 
    $n$~\cite{Bartels-Lueck-Reich-Rueping(2012)}.
    Of course the hard part is still to prove in the induction step 
    that $\alpha_{\calf_{n-1}}$ is an isomorphism for $\GL_n(\IZ)$ 
    where the family $\calf_{n-1}$  contains only groups that
    can be build from $\GL_{n-1}(\IZ)$ and poly-cyclic groups.
    The induction step uses 
    Theorem~\ref{thm:transfer-red-homotopy} from 
    Section~\ref{sec:thms-ABC}.
    See also Remark~\ref{rem:cocompact-is-important}.
  \end{remark}

  \subsection*{More general coefficients}
  Farrell and Jones also introduced a generalization of their
  conjecture now called the fibered Farrell-Jones Conjecture.
  This version of the conjecture is often not harder to prove
  than the original conjecture.
  Its advantage is that it has better inheritance properties.
  An alternative to the fibered conjecture is to allow more 
  general coefficients where the group can act on the ring.
  As $K$-theory only depends on the category of finitely generated 
  projective modules and not on the ring itself, it is natural to
  also replace the ring by an additive category.
  We briefly recall this generalization 
  from~\cite{Bartels-Reich(2007coeff)}.

  Let $\cala$ be an additive category with a $G$-action.
  There is a construction of an additive category $\cala[G]$ 
  that generalizes the twisted group ring for 
  actions of $G$ on a ring $R$. 
  (In the notation of~\cite[Def.~2.1]{Bartels-Reich(2007coeff)}
  this category is denoted as $\cala *_G G/G$; $\cala[G]$ is a more
  descriptive name for it.)
  There is also a homology theory $H^G_*(-;\bfK_\cala)$ for $G$-spaces
  such that $H^G_*(G/H;\bfK_\cala) = K_*(\cala[H])$.
  Therefore there are  assembly maps
  \begin{equation*} %\label{eq.cala-assembly}
    \alpha_\calf \colon H^G_*(E_\calf G; \bfK_\cala) \to 
        H^G_*(G/G;\bfK_\cala) = K_*(\cala[G]).
  \end{equation*}

  \begin{conjecture}[Farrell-Jones Conjecture with coefficients]
    \label{Conj:FJC-coeff}
    For all groups $G$ and all additive categories $\cala$ 
    with $G$-action  the assembly map $\alpha_{\VCyc}$
    is an isomorphism.
  \end{conjecture}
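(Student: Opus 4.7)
The statement is a conjecture, not a theorem, and no uniform proof for all groups $G$ and all additive categories $\cala$ with $G$-action is known; the best one can hope to sketch is the general strategy, initiated by Farrell and Jones and refined in the papers referenced in the introduction, for verifying the conjecture for particular classes of groups. The plan is to attack $\alpha_{\VCyc}$ one class of groups at a time, combining the transitivity principle, controlled algebra, and dynamical input from a flow space.

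First, for a given $G$ and $\cala$, I would use the transitivity principle (Proposition~\ref{prop:transitivity}) to enlarge $\VCyc$ to some family $\calh$ that is better adapted to the geometry of $G$: for hyperbolic groups or $\CAT(0)$-groups one can essentially stay with $\VCyc$, whereas for groups like $\GL_n(\IZ)$ (cf.\ Remark~\ref{rem:induction-GL_nZ}) one takes $\calh$ large enough to set up an induction on $n$. One then needs a finite-dimensional $G$-$CW$-model for $E_\calh G$ on which $G$ acts with controllable geometry, typically cocompactly and with a compactification or with a $G$-equivariant flow.

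Next I would translate the assembly map into controlled algebra as outlined in Section~\ref{sec:controlled-algebra}: elements of $K_*(\cala[G])$ are represented by geometric modules and morphisms over $G$, and $\alpha_\calh$ being an isomorphism is equivalent to the statement that every such representative can be made arbitrarily small, up to chain homotopy, with respect to a suitable open $\calh$-cover of $E_\calh G$. This reduces the problem to verifying one of the axiomatic criteria of Section~\ref{sec:thms-ABC}, i.e.\ producing for every $\e > 0$ a controlled transfer from $G$ to a contractible finite-dimensional space together with an open cover of a flow space of bounded covering dimension whose sets are long and thin along the flow.

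The main obstacle, and the place where each of the cited papers does its characteristic work, is the construction and analysis of the flow space itself (Section~\ref{sec:flow-spaces}). For hyperbolic groups the geodesic flow on the Rips complex can be used; for $\CAT(0)$-groups the naive geodesic flow fails to be continuous at branch points, so a suitable substitute has to be built; for lattices such as $\GL_n(\IZ)$ one needs a flow constructed from the geometry of the associated symmetric space. Once a flow space with virtually cyclic periodic isotropy and bounded dimension is in hand, checking the axioms of Section~\ref{sec:thms-ABC} becomes a matter of combining covering-dimension estimates with the transfer technology, and the algebraic side of the argument is comparatively routine. My proposal is therefore frankly a meta-strategy: approach the conjecture case by case, and expect the heart of each proof to be dynamical rather than algebraic.
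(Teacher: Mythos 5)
You correctly recognize that Conjecture~\ref{Conj:FJC-coeff} is an open conjecture with no known general proof, and your meta-strategy sketch (transitivity principle, controlled algebra, transfer reducibility, flow spaces) accurately mirrors the approach the paper takes for the special classes of groups it treats. Nothing to fix here; the paper likewise only states the conjecture and then develops machinery for verifying it case by case.
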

  
  An advantage of this version of the conjecture is the following
  inheritance property.

  \begin{proposition}
    \label{prop:inheritance-for-extensions}
    Let $N \rightarrowtail G \twoheadrightarrow Q$ be an extension
    of groups.
    Assume that $Q$ and all preimages of virtually cyclic
    subgroups under $G \twoheadrightarrow Q$ 
    satisfies the Farrell-Jones Conjecture with 
    coefficients~\ref{Conj:FJC-coeff}.
    Then $G$ satisfies   the Farrell-Jones Conjecture with 
    coefficients~\ref{Conj:FJC-coeff}.
  \end{proposition}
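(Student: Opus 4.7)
Let $\pi\colon G \twoheadrightarrow Q$ denote the projection. The plan is to apply the transitivity principle (Proposition~\ref{prop:transitivity}, in its evident generalization from rings to additive categories with $G$-action) to the inclusion of families
\begin{equation*}
 \VCyc \;\subseteq\; \calh := \{\, H \leq G \mid \pi(H) \in \VCyc \,\}
\end{equation*}
in $G$. Note that $\calh$ is indeed a family and that $H\in\calh$ if and only if $H$ is contained in some preimage $\pi^{-1}(V)$ with $V \in \VCyc(Q)$. Fix an additive category $\cala$ with $G$-action; we must verify the two assumptions of the transitivity principle for the coefficients $\cala$.

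For the first assumption I would identify the assembly map $\alpha_\calh^{G,\cala}$ with the assembly map $\alpha_{\VCyc}^{Q,\cala'}$ for $Q$ with suitable coefficients $\cala'$. On the geometric side, any model for $E_{\VCyc}Q$ pulls back along $\pi$ to a $G$-$CW$-complex whose isotropy at a point of the fiber over $qQ$ is exactly $\pi^{-1}(\mathrm{stab}_Q(qQ)) \cap G$; hence the pullback $\pi^*E_{\VCyc}Q$ is a model for $E_\calh G$. On the algebraic side, the Davis--L\"uck framework produces a $Q$-equivariant additive category $\cala'$ (a suitable ``induction along $\pi$'' of $\cala$) with the property $\cala'[Q] \cong \cala[G]$ and, more generally, a natural identification of equivariant homology theories
\begin{equation*}
 H^G_*(\pi^* X;\bfK_\cala) \;\cong\; H^Q_*(X;\bfK_{\cala'})
\end{equation*}
for $Q$-$CW$-complexes $X$. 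Applied to $X=E_{\VCyc}Q$ and to $X=Q/Q$, this identifies $\alpha_\calh^{G,\cala}$ with $\alpha_{\VCyc}^{Q,\cala'}$, which is an isomorphism by the assumed Farrell-Jones Conjecture with coefficients for $Q$.

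For the second assumption, let $H\in\calh$; then $H$ is a subgroup of $\tilde V := \pi^{-1}(\pi(H))$, which is the preimage of a virtually cyclic subgroup of $Q$ and hence satisfies Conjecture~\ref{Conj:FJC-coeff} by hypothesis. I would then invoke the known inheritance of the Farrell-Jones Conjecture with coefficients under passage to subgroups (applied with the restricted coefficients $\cala|_{\tilde V}$) to deduce that $H$ itself satisfies Conjecture~\ref{Conj:FJC-coeff} with coefficients $\cala|_H$. Since $\VCyc \cap H = \VCyc(H)$, this gives exactly that $\alpha_{\VCyc\cap H}^{H,\cala|_H}$ is an isomorphism, as required.

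The transitivity principle then yields that $\alpha_{\VCyc}^{G,\cala}$ is an isomorphism, proving the proposition. The main obstacle is the first step: constructing the induced coefficient category $\cala'$ with $Q$-action and verifying the identification of equivariant homology theories $H^G_*(\pi^*\,\cdot\,;\bfK_\cala) \cong H^Q_*(\,\cdot\,;\bfK_{\cala'})$. This identification is the technical heart of the argument and is the reason why the coefficient version of the conjecture has the subgroup/extension inheritance properties that the classical version (with fixed ring $R$) does not.
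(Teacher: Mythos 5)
Your argument is correct. The paper in fact states Proposition~\ref{prop:inheritance-for-extensions} without proof, so there is no in-paper proof to compare against; but the route you take is precisely the standard one used in the cited literature: apply the transitivity principle (in its version for additive categories with $G$-action) to the inclusion $\VCyc \subseteq \calh := \{H \leq G \mid \pi(H)\in\VCyc(Q)\}$, verify the $\calh$-assembly isomorphism for $G$ by identifying it, via the induction structure of the Davis--L\"uck equivariant homology theory and the observation that $\pi^*E_{\VCyc}Q$ models $E_\calh G$, with the $\VCyc$-assembly map for $Q$ with induced coefficients, and handle the second hypothesis using the inheritance of the coefficient version of the conjecture under passage to subgroups. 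The two small points you flag as technical (construction of $\cala'$ with $\cala'[Q]\cong\cala[G]$ and the natural identification $H^G_*(\pi^*X;\bfK_\cala)\cong H^Q_*(X;\bfK_{\cala'})$, and the subgroup inheritance) are indeed the genuine content, and both are established in the references behind~\cite{Bartels-Reich(2007coeff)} and~\cite{Bartels-Lueck-Reich(2008appl)}. The only cosmetic remark is that the expression $\pi^{-1}(\mathrm{stab}_Q(qQ))\cap G$ has a redundant $\cap\,G$, and you should note explicitly that $\VCyc(G)\cap H=\VCyc(H)$ because being virtually cyclic is an intrinsic property of the abstract group; otherwise the argument is complete.
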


  \begin{remark}
    Proposition~\ref{prop:inheritance-for-extensions} can be used to
    prove the Farrell-Jones Conjecture with coefficients for
    virtually nilpotent groups using the conjecture for 
    virtually abelian groups, 
    compare~\cite[Thm.~3.2]{Bartels-Lueck-Reich(2008appl)}. 
    
    It can also be used to reduce the conjecture for 
    virtually poly-cyclic groups to irreducible  special affine 
    groups~\cite[Sec.~4]{Bartels-Farrell-Lueck(2011cocomlat)}.
    The latter class consists of certain groups $G$ for which there
    is an exact sequence $\Delta \to G \to D$,
    where $D$ is infinite cyclic or the infinite dihedral group
    and $\Delta$ is a crystallographic group. 
  \end{remark}
  
  \begin{remark}
    For additive categories with $G$-action the consequence from
    Remark~\ref{rem:conj-as-K=0} becomes an equivalent formulation of
    the conjecture:
    A group $G$ satisfies the Farrell-Jones Conjecture with 
    coefficients~\ref{Conj:FJC-coeff} if and only if 
    for additive categories $\calb$ with $G$-action we have
    \begin{equation*}
      K_* (\calb[V]) = 0 \; \text{for all} \; V \in \VCyc
      \implies K_* (\calb[G] ) = 0.
    \end{equation*}
    (This follows from~\cite[Prop.~3.8]{Bartels-Lueck-Reich(2008hyper)} 
    because the obstruction category $\calo^G(E_\calf G;\cala)$
    is equivalent to $\calb[G]$ for some $\calb$ with
    $K_*(\calb[F]) = 0$ for all $F \in \calf$.)
    
    In particular, surjectivity implies bijectivity for
    the Farrell-Jones Conjecture with coefficients. 
  \end{remark}

  \begin{remark}
    The Farrell-Jones Conjecture~\ref{conj:FJ-Conj} should be viewed as 
    a conjecture about finitely generated groups. 
    If it holds for all finitely
    generated subgroups of a group $G$, then it holds for $G$.
    The reason for this is that the conjecture is stable under
    directed unions of groups~\cite[Thm.~7.1]{Farrell-Linnell(2003a)}.

    With coefficients the situation is even better.   
    This version of the conjecture is stable under directed colimits
    of groups~\cite[Cor.~0.8]{Bartels-Lueck(2009coeff)}.
    Consequently the Farrell-Jones Conjecture with coefficients 
    holds for all groups if and only if it holds for all
    \emph{finitely presented} groups,
    compare~\cite[Cor.~4.7]{Bartels-Echterhoff-Lueck(2008colim)}.
    It is therefore a conjecture about finitely presented groups.
  \end{remark}

  Despite the usefulness of this more general version 
  of the conjecture I will mostly ignore it in this paper to
  keep the notation a little simpler.

  \subsection*{$L$-theory}
  There is a version of the Farrell-Jones Conjecture for
  $L$-theory.
  For some applications this is very important.
  For example the Borel Conjecture asserting the rigidity    
  of closed aspherical topological manifolds follows in
  dimensions $\geq 5$ via surgery theory from the
  Farrell-Jones Conjecture in $K$- and $L$-theory.
  The $L$-theory version of the conjecture
  is very similar to the $K$-theory version.
  Everything said so far about the $K$-theory version
  also holds for the $L$-theory version.

  For some time proofs of the $L$-theoretic Farrell-Jones 
  conjecture have been considerably harder than their $K$-theoretic
  analoga.
  Geometric transfer arguments used in $L$-theory are 
  considerably more involved than their counterparts in $K$-theory.
  A change that came with considering arbitrary rings as coefficients
  in~\cite{Bartels-Farrell-Jones-Reich(2004)},
  is that transfers became more algebraic.
  It turned out~\cite{Bartels-Lueck(2012annals)} that this more
  algebraic point of view allowed for much easier $L$-theory transfers.
  (In essence, because the world of chain complexes with 
  Poincar\'e duality is  much more flexible than the world of manifolds.)
  This is elaborated  at the end of 
  Section~\ref{sec:on-proof-thm-transfer-strict}.

  I think that it is fair to say that, as far as proofs are concerned,
  there is as at the moment no significant difference between the 
  $K$-theoretic and the $L$-theoretic Farrell-Jones Conjecture.  
  For this reason $L$-theory is not discussed in much detail 
  in these notes. 

  \section{Controlled topology}
    \label{sec:controlled-algebra}

  \subsection*{The thin $h$-cobordism theorem}
  
  An \emph{$h$-cobordism} $W$ is a compact manifold whose boundary
  is a disjoint union $\dd W = \dd_0 W \amalg \dd_1 W$ 
  of closed manifolds such that the inclusions
  $\dd_0 W \to W$ and $\dd_1 W \to W$ are homotopy 
  equivalences.
  If $M = \dd_0 W$, then we say $W$ is an $h$-cobordism 
  over $M$.
  If $W$ is homeomorphic to $M \x [0,1]$, then $W$ is
  called trivial.

  \begin{definition}
    Let $M$ be a closed manifold with a metric $d$.
    Let $\e \geq 0$.  
    
    An $h$-cobordism $W$ over $M$ is said to be $\e$-controlled
    over $M$ if there exists a retraction $p \colon W \to M$ for the
    inclusion $M \to W$ and a homotopy $H \colon \id_W \to p$ such that
    for all $x \in W$ the track 
    \begin{equation*}
       \{ p (H(t,x)) \mid t \in [0,1] \} \subseteq M
    \end{equation*}
    has diameter at most $\e$.
  \end{definition}
  
  \begin{remark}
    Clearly, the trivial $h$-cobordism is $0$-controlled.
    Thus it is natural to think of being $\e$-controlled for small $\e$
    as being close to the trivial $h$-cobordism.
  \end{remark}
  
  The following theorem is due to Quinn~\cite[Thm.~2.7]{Quinn(1979a)}.
  See~\cite{Chapman(1979), Chapman-Ferry(1979), Ferry(1977)} for
  closely related results by Chapman and Ferry. 

  \begin{theorem}[Thin $h$-cobordism theorem]
    \label{thm:thin-h-cob}
    Assume $\dim M \geq 5$. 
    Fix a metric $d$ on $M$ (generating the topology of $M$).
    
    Then there is $\e > 0$ such that all $\e$-controlled 
    $h$-cobordisms over $M$ are trivial.
  \end{theorem}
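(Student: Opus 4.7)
The plan is to reduce the statement to the classical $s$-cobordism theorem via a controlled Whitehead torsion argument. Given an $\e$-controlled $h$-cobordism $W$ over $M$ with retraction $p \colon W \to M$ and homotopy $H$ of track diameter $\leq \e$, I would first choose a handle decomposition of $W$ relative to $\dd_0 W$. The associated cellular $\IZ[\pi_1 M]$-chain complex is contractible because the inclusion $\dd_0 W \to W$ is a homotopy equivalence, and the retraction $p$ equips each handle/cell with a location in $M$. The $\e$-control on $H$ forces both the attaching maps and a chosen chain contraction to be $\e$-controlled in the sense of the geometric modules over $M$ from Section~\ref{sec:controlled-algebra}. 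This produces a controlled refinement $\tau_\e(W)$ of the classical Whitehead torsion $\tau(W) \in \Wh(\pi_1 M)$, living in a controlled Whitehead group $\Wh(M;\e)$ whose forgetful map to $\Wh(\pi_1 M)$ sends $\tau_\e(W)$ to $\tau(W)$.

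The technical heart of the argument, and the step I expect to be the main obstacle, is a squeezing (vanishing) result for controlled Whitehead groups: there exists $\e_0 = \e_0(M,d) > 0$ such that the forgetful map $\Wh(M;\e_0) \to \Wh(\pi_1 M)$ is the zero map. Concretely, one has to show that every $\e_0$-controlled chain-level self-equivalence can be trivialized through a finite sequence of elementary moves (expansions, collapses and stabilizations) each of which takes place within small metric balls of $M$. Geometrically these correspond to handle cancellations carried out inside such balls; pushing them through in a controlled way depends on the Whitney trick, and this is precisely where the hypothesis $\dim M \geq 5$ enters. An equivalent algebraic route would be to run an Eilenberg swindle along an open cover of $M$ by small balls to absorb any controlled torsion; this is closer in spirit to the controlled algebra formalism used later in these notes.

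Granting squeezing, the proof concludes quickly: for $\e \leq \e_0$ the classical Whitehead torsion $\tau(W)$ is the image of $\tau_\e(W)$ under the zero map, so $\tau(W) = 0 \in \Wh(\pi_1 M)$. Since $\dim W = \dim M + 1 \geq 6$, the ordinary $s$-cobordism theorem then produces a homeomorphism $W \cong M \x [0,1]$ rel $\dd_0 W$, witnessing triviality of $W$.
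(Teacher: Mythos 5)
The paper does not prove Theorem~\ref{thm:thin-h-cob} itself but cites Quinn; however, the remark following Theorem~\ref{thm:alg-thin-h-cob-thm} sketches exactly the reduction you propose, and your outline is essentially the same in spirit. Both routes realize the Whitehead torsion $\tau(W)$ by a suitably controlled chain equivalence over $M$ (you via a handle decomposition of $W$ rel $\dd_0 W$, the paper via singular chains of $\widetilde W$ and $\widetilde M$ -- either works), then invoke a squeezing statement to kill $\tau(W)$, and finally apply the ordinary $s$-cobordism theorem. Your ``squeezing'' $\Wh(M;\e_0) \to \Wh(\pi_1 M) = 0$ is precisely the content of Theorem~\ref{thm:alg-thin-h-cob-thm} applied with the trivial family $\calf = \{1\}$: there the theorem puts $[f_W]$ into the image of $\alpha_{\{1\}} \colon H_1^G(EG;\bfK_\IZ) \to K_1(\IZ[G])$, and since $\Wh(G)$ is precisely the cokernel of this map in degree $1$, the torsion dies. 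So you have correctly identified the key lemma, and your Eilenberg-swindle remark points toward the right mechanism (Pedersen-Weibel cones / forget-control descriptions of assembly) behind it.

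The one place your account is misleading is where you claim the Whitney trick, and hence $\dim M \geq 5$, enters at the squeezing step. It does not: the controlled-algebra squeezing is a purely algebraic statement (it is part of the proof that assembly is a forget-control map) and holds with no dimension hypothesis whatsoever. If you instead carry out the squeezing \emph{geometrically} by cancelling handles inside small balls, you would indeed need the Whitney trick, but then you would have directly exhibited a product structure on $W$ and would not need to pass through the $s$-cobordism theorem afterwards -- your write-up conflates the two routes. In the reduction you actually run, the hypothesis $\dim M \geq 5$ is used exactly once, at the very end, to invoke the $s$-cobordism theorem with $\dim W \geq 6$; this is also where the paper's remark places it. You should delete the Whitney-trick sentence, or commit fully to the direct geometric argument and drop the passage through $\Wh$ and the $s$-cobordism theorem.
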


  \begin{remark}
    Farrell-Jones used the thin $h$-cobordism 
    Theorem~\ref{thm:thin-h-cob} and generalizations thereof to 
    study $K_*(\IZ[G])$, $* \leq 1$. 
    For example in~\cite{Farrell-Jones(1986a)} they used the 
    geodesic flow of a negatively curved manifold $M$ to show that
    any element in $\Wh(\pi_1 M)$ could be realized by an 
    $h$-cobordism that in turn had to be trivial by an 
    application of (a generalization of) the thin $h$-cobordism
    Theorem. 
    Thus $\Wh(\pi_1 M) = 0$.
    In later papers they replaced the thin $h$-cobordism theorem  
    by controlled surgery theory and controlled pseudoisotopy theory.

    The later proofs of the Farrell-Jones Conjecture that we discuss
    here do not depend on the thin $h$-cobordism Theorem,
    controlled surgery theory or controlled pseudoisotopy theory,
    but on a more algebraic control theory that we discuss in the
    next subsection. 
  \end{remark}

  \subsection*{An algebraic analog of the thin $h$-cobordism theorem.}

  Geometric groups (later also called geometric modules) were 
  introduced by 
  Connell-Hollingsworth~
      \cite{Connell-Hollingsworth(1969geometric-groups)}.
  The theory was developed much further by, among others, Quinn
  and Pedersen and is sometimes referred to as controlled algebra.
  A very pleasant introduction to this theory is given
  in~\cite{Pedersen(2000contr-alg-K-survey)}.     

  Let $R$ be a ring and $G$ be a group.

  \begin{definition}
    Let $X$ be a free $G$-space and $p \colon X \to Z$ be a 
    $G$-map to a metric space with an isometric $G$-action.
    \begin{enumerate}
    \item A \emph{geometric $R[G]$-module over $X$} is a collection
      $(M_x)_{x \in X}$ of finitely generated free $R$-modules
      such that the following two conditions are satisfied.
      \begin{itemize}
      \item $M_x = M_{gx}$ for all $x \in X$, $g \in G$.
      \item $\{ x \in X \mid M_x \neq 0 \} = G \cdot S_0$
         for some finite subset $S_0$ of $X$.
      \end{itemize}
    \item Let $M$ and $N$ be geometric $R[G]$-modules over $X$. 
       Let $f \colon \bigoplus_{x \in X} M_x \to \bigoplus_{x \in X} N_x$
       be an $R[G]$-linear map (for the obvious $R[G]$-module structures).
       Write $f_{x'',x'}$ for the composition
       \begin{equation*}
         M_{x'} \rightarrowtail \bigoplus_{x \in X} M_x \xrightarrow{f}
           \bigoplus_{x \in X} N_x \twoheadrightarrow N_{x''}.
       \end{equation*}
       The \emph{support} of $f$ is defined as
       $\supp f := \{ (x'',x') \mid f_{x'',x'} \neq 0 \} \subseteq X \x X$.
       Let $\e \geq 0$. 
       Then $f$ is said to be \emph{$\e$-controlled over $Z$} if 
       \begin{equation*}
          d_Z(p(x''),p(x')) \leq \e \quad \text{for all} 
            \quad (x'',x') \in \supp f.
       \end{equation*}
    \item Let $M$ be a geometric $R[G]$-module over $X$. 
       Let $f \colon \bigoplus_{x \in X} M_x \to \bigoplus_{x \in X} M_x$ 
       be an $R[G]$-automorphism.
       Then $f$ is said to be an \emph{$\e$-automorphism over $Z$}
       if both $f$ and $f^{-1}$ are $\e$-controlled over $Z$. 
    \end{enumerate}
  \end{definition}

  \begin{remark}
     Geometric $R[G]$-modules over $X$ are
     finitely generated free $R[G]$-modules with an additional structure,
     namely an $G$-equivariant decomposition into $R$-modules indexed
     by points in $X$.
     This additional structure is not used to change the notion of
     morphisms which are still $R[G]$-linear maps. 
     But this structure provides an
     additional point of view for $R[G]$-linear maps:
     the set of morphisms between two geometric $R[G]$-modules 
     now carries a filtration by control.
       
     A good (and very simple) analog is the following.
     Consider finitely generated free $R$-modules.
     An additional structure one might be interested in 
     are bases for such modules.
     This additional information allows us to view
     $R$-linear maps between them as matrices.  

     Controlled algebra is really not much more than working with
     (infinite) matrices whose index set is a (metric) space.
     Nevertheless this theory is very useful and flexible. 
  \end{remark}
 
  It is a central theme in controlled topology that sufficiently
  controlled obstructions (for example Whitehead torsion) are trivial.
  Another related theme is that assembly maps can be constructed 
  as \emph{forget-control} maps.
  In this paper we will use a variation of this theme for $K_1$
  of group rings over arbitrary rings.
  Before we can state it we briefly fix some conventions
  for simplicial complexes.

  \begin{convention}
    \label{conv:calf-complex}
    Let $\calf$ be a family of subgroups of $G$.
    By a simplicial $(G,\calf)$-complex
    we shall mean a simplicial complex $E$  with a simplicial $G$-action
    whose isotropy groups $G_x = \{ g \in G \mid g \cdot x = x \}$
    belong to $\calf$ for all $x \in E$.
  \end{convention}

  \begin{convention}
    \label{conv:l1-metric}
    We will always use the $l^1$-metric on simplicial complexes.
    Let $Z^{(0)}$ be the vertex set of the simplicial complex $Z$.
    Then every element $z \in Z$ can be uniquely written as
    $z = \sum_{v \in Z^{(0)}} z_v \cdot v$ where $z_v \in [0,1]$,
    all but finitely many $z_v$ are zero and $\sum_{v \in Z^{(0)}} z_v = 1$.
    The $l^1$-metric on $Z$ is given by
    \begin{equation*}
      d^1_Z ( z,z') = \sum_{v \in V} | z_v - {z'}_v |. 
    \end{equation*}
  \end{convention}

  \begin{remark}
    If $E$ is a simplicial complex with a simplicial $G$-action such
    that the isotropy groups $G_v$ belong to $\calf$ for all vertices 
    $v \in E^{(0)}$ of $E$, then $E$ is a simplicial 
    $(G, \Vcalf)$-complex, where 
    $\Vcalf$ consists of all subgroups $H$ of $G$ that admit a subgroup of
    finite index that belongs to $\calf$.
  \end{remark}

  \begin{theorem}[Algebraic thin $h$-cobordism theorem]
    \label{thm:alg-thin-h-cob-thm}
    Given a natural number $N$, there is $\e_N > 0$ such that
    the following holds:
    Let
    \begin{enumerate}
    \item $Z$ be a simplicial $(G,\calf)$-complex of 
      dimension at most $N$,
    \item $p \colon X \to Z$ be a $G$-map, where $X$ is a free
      $G$-space,
    \item $M$ be a geometric $R[G]$-module over $X$,
    \item $f \colon M \to M$ be an $\e_N$-automorphism over $Z$
      (with respect to the $l^1$-metric on $Z$).
    \end{enumerate}
    Then the $K_1$-class $[f]$ of $f$ belongs to the image
    of the assembly map
    \begin{equation*}
      \alpha_\calf \colon H^G_1(E_\calf G; \bfK_R) \to K_1 (R[G]).
    \end{equation*}
  \end{theorem}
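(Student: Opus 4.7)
The plan is to argue by induction on the dimension $N$, using the obstruction category reformulation of the image of $\alpha_\calf$. Recall that the $K$-theory of the obstruction category $\calo^G(E_\calf G; R)$ sits in a cofibration sequence together with the assembly map $\alpha_\calf$, so it suffices to show that the image of $[f]$ in $K_1(\calo^G(E_\calf G; R))$ vanishes once $\e_N$ is chosen small enough. Since $Z$ is a simplicial $(G,\calf)$-complex, there is a $G$-map $Z \to E_\calf G$ (unique up to $G$-homotopy by the universal property), along which the control of $f$ can be transported from $Z$ to the ambient classifying space.

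For the base case $N = 0$, the complex $Z$ is a disjoint union of $G$-orbits $G/H_\alpha$ with $H_\alpha \in \calf$, and the $l^1$-distance between any two distinct points of $Z$ equals $2$. Thus any $\e_0 < 2$ forces $\supp f$ to lie over the diagonal of $Z \x Z$, so $f$ splits as $\bigoplus_\alpha f_\alpha$ with each $f_\alpha$ an $R[H_\alpha]$-automorphism. Then $[f] = \sum_\alpha i_{\alpha,*}[f_\alpha]$ with each $[f_\alpha] \in K_1(R[H_\alpha])$ manifestly in the image of $\alpha_\calf$ via the edge homomorphism $\bigoplus_\alpha K_1(R[H_\alpha]) \to H^G_1(E_\calf G; \bfK_R)$ of the equivariant Atiyah--Hirzebruch filtration.

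For the inductive step, I would choose a $G$-invariant open cover $\{U, V\}$ of $Z$: $U$ a union of small open stars around the barycenters of $N$-simplices, and $V$ a $G$-invariant thickening of the $(N-1)$-skeleton $Z^{(N-1)}$ equipped with a $G$-equivariant deformation retraction $V \to Z^{(N-1)}$ whose tracks have controlled $l^1$-diameter. Using a $G$-equivariant partition of unity subordinate to $\{U, V\}$, I would perform a controlled Mayer--Vietoris factorization of $f$: modulo conjugation by further small-control automorphisms, $f$ becomes a product $f_U \cdot f_V$ of automorphisms supported over $U$ and $V$, respectively. The piece $f_U$ lives over a disjoint union of contractible cones around barycenters whose $G$-isotropy groups lie in $\calf$, so its class in $K_1$ of the obstruction category is killed by an Eilenberg swindle toward these barycenters. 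The piece $f_V$ can be pushed forward along the retraction $V \to Z^{(N-1)}$ to an automorphism over a complex of dimension $N-1$; with $\e_N$ small enough, the pushed automorphism is $\e_{N-1}$-controlled, and the inductive hypothesis applies.

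The main obstacle will be the quantitative bookkeeping on the control constants. Each step -- the partition-of-unity splitting, the retraction $V \to Z^{(N-1)}$, and the swindle -- inflates control by a factor depending on $N$ and the local combinatorial geometry of a single $N$-simplex in the $l^1$-metric (e.g., the $l^1$-distance from a barycenter to the boundary of its simplex), so $\e_N$ must be prescribed inductively to absorb all these factors and still deliver a pushforward that is $\e_{N-1}$-controlled. A secondary but nontrivial issue is making the cover, the partition of unity, and the retraction strictly $G$-equivariant; this is arranged by passing to the barycentric subdivision of $Z$ and exploiting the fact that the $G$-action is simplicial with isotropy in $\calf$, so one can build $G$-equivariant auxiliary data by averaging over simplex stabilizers.
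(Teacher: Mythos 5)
The paper does not give a proof of this theorem: it defers to \cite[Thm.~5.3]{Bartels-Lueck(2012annals)}, a corresponding (and stronger) statement for the obstruction category, and simply observes that the present theorem follows from it. So there is no in-paper argument to compare against line by line. That said, your overall strategy --- induction on the dimension of $Z$, peeling off an open piece $U$ concentrated near the barycenters of top simplices and pushing what remains to the $(N-1)$-skeleton, with control constants prescribed backwards --- is genuinely in the spirit of the proofs of \cite[Thm.~5.3]{Bartels-Lueck(2012annals)} and its precursors \cite[Sec.~7]{Bartels-Lueck-Reich(2008hyper)} and \cite{Bartels(2003a)}. Your base case is correct (up to the small precision that each $f_\alpha$ is an $R[G]$-automorphism induced up from an $R[H_\alpha]$-automorphism, and it is the latter that exhibits membership in the image of the assembly map), and the two-sided quantitative bookkeeping you flag is a real but manageable part of the argument.

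The genuine gap is the sentence claiming that, modulo conjugation by small-control automorphisms, $f$ ``becomes a product $f_U \cdot f_V$ of automorphisms supported over $U$ and $V$''. For a general $\varepsilon$-controlled automorphism there is no such factorization: a controlled automorphism over a Mayer--Vietoris cover does not split into automorphisms supported over the pieces (the contribution over $U \cap V$ cannot be attributed to one factor without destroying either the support condition, invertibility, or the control). What the actual proofs do instead is work one step up in abstraction: they build a Karoubi filtration on the obstruction category by support near the $(N-1)$-skeleton, obtaining an exact sequence of $K$-groups $K_1(\calc_V) \to K_1(\calc) \to K_1(\calc / \calc_V) \to K_0(\calc_V) \to \cdots$. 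One shows the image of $[f]$ in the quotient $K_1(\calc / \calc_V)$ vanishes via an Eilenberg swindle toward the barycenters (this is where your $f_U$-idea is realized, but only at the level of $K$-theory classes of germs, not as an honest automorphism factor), lifts $[f]$ to $K_1(\calc_V)$, and then applies induction. Replacing the purported factorization with this exact-sequence argument is the missing idea; without it the inductive step does not go through. A secondary issue: the opening ``transport control from $Z$ to $E_\calf G$'' step should be dropped --- a $G$-map $Z \to E_\calf G$ need not interact well with the $l^1$-metric, and the control condition and the induction are best carried out directly over $Z$ (or over a standard simplicial model as in the remark following the theorem, which is a metric comparison, not a transport along an arbitrary classifying map).
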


  \begin{remark}
      I called Theorem~\ref{thm:alg-thin-h-cob-thm} the
      algebraic thin $h$-cobordism theorem here, because it can be used 
      to prove the thin $h$-cobordism theorem.
      Very roughly, this works as follows. 
      Let $W$ be an $\e$-thin $h$-cobordism over $M$.
      Let $G = \pi_1 M = \pi_1 W$.
      The Whitehead torsion of $W$ can be constructed 
      using the  singular chain complexes of the
      universal covers $\widetilde W$ and $\widetilde M$.    
      This realizes the Whitehead torsion $\tau_W \in \Wh(G)$ of $W$ by an
      $\widetilde \e$-automorphism $f_W$ over $\widetilde M$, i.e.
      $[f_W]$ maps to $\tau_W$ under $K_1(\IZ[G]) \to \Wh(G)$.
      Moreover, $\widetilde \e$ can be explicitly bounded in terms of $\e$,
      such that $\widetilde \e \to 0$ as $\e \to 0$.
      Because $\widetilde M$ is a free $G = \pi_1 M$-space
      it follows from Theorem~\ref{thm:alg-thin-h-cob-thm} that 
      $[f_W]$ belongs to the image of the assembly map
      $\alpha \colon H_1^G(EG,\bfK_\IZ) \to K_1(\IZ[G])$.
      But $\Wh(G)$ is the cokernel of $\alpha$ and therefore
      $\tau_W = 0$.
      This reduces the thin $h$-cobordism theorem to the
      $s$-cobordism theorem.
 
      I believe that -- at least in spirit -- this outline is very close 
      to Quinn's proof in~\cite{Quinn(1979a)}.   
  \end{remark}

  \begin{remark} \label{rem:obstruction}
     If $f \colon M \to M'$ is $\e$-controlled over $Z$ and
     and $f' \colon M' \to M''$ is $\e'$-controlled over $Z$,
     then their composition $f' \circ f$ is $\e+\e'$-controlled.
     In particular, there is no category whose objects are
     geometric modules and whose morphisms are $\e$-controlled
     for fixed (small) $\e$.
     However, there are very elegant substitutes for this 
     ill-defined category.
     These are built by considering a variant of the theory
     over an open cone over $Z$ and taking a quotient category.
     In this quotient category every morphisms has for every 
     $\e > 0$ an $\e$-controlled representative.
     Pedersen-Weibel~\cite{Pedersen-Weibel(1989)} used this to 
     construct homology of a space $E$
     with coefficients in the $K$-theory spectrum 
     as the $K$-theory of an additive category.
     Similar constructions can be used to describe the assembly
     maps as forget-control 
     maps~\cite{Bartels-Farrell-Jones-Reich(2004), 
                              Carlsson-Pedersen(1995a)}. 
     This also leads to a category 
     (called the obstruction category 
     in~\cite{Bartels-Lueck-Reich(2008hyper)}),
     whose $K$-theory describes the fiber
     of the assembly map. 
     A minor drawback of these constructions is that they usually
     involve a dimension shift.

     A very simple  version of such a construction is discussed
     at the end of this section. 
     See in particular Theorem~\ref{thm:thin-h-cob-higher}.
  \end{remark}

  \begin{remark} 
      It is not hard to deduce Theorem~\ref{thm:alg-thin-h-cob-thm}
      from~\cite[Thm.~5.3]{Bartels-Lueck(2012annals)}.
      The latter result is a corresponding result for the
      obstruction category mentioned in Remark~\ref{rem:obstruction}.
      In fact this result about the obstruction 
      is stronger and can be used to prove 
      that the assembly map is an isomorphism and not just surjective, 
      see~\cite[Thm.~5.2]{Bartels-Lueck(2012annals)}. 
      I have elected to state the weaker 
      Theorem~\ref{thm:alg-thin-h-cob-thm} because it is much easier
      to state, but still grasps the heart of the matter.
      On the other hand, I think it is not at all easier to 
      prove Theorem~\ref{thm:alg-thin-h-cob-thm} than to
      prove the corresponding statement for the obstruction category.
      (The result in~\cite{Bartels-Lueck(2012annals)} deals with
      chain complexes, but this is not an essential difference.) 
  \end{remark}

  \begin{remark}   
      Results like Theorem~\ref{thm:alg-thin-h-cob-thm}  
      are very  useful to prove the Farrell-Jones
      Conjecture.
      But it is not clear to me, that it really provides the best 
      possible description of the image of the assembly map.  
      For $g \in G$ we know that $[g]$ lies in the image of
      the assembly map. 
      But its most natural representative 
      (namely the isomorphism  of $R[G]$
      given by right multiplication by $g$) is not $\e$-controlled for 
      small $\e$.  

      It may be beneficial  to find other, maybe more algebraic
      and less geometric, characterizations of the image of 
      the assembly map.
      But I do not know how to approach this.
  \end{remark}

  \begin{remark} 
      The use of the $l^1$-metric in Theorem~\ref{thm:alg-thin-h-cob-thm}
      is of no particular importance. 
      In order for $\e_N$ to only depend on $N$ and not on $Z$, one has to
      commit to some canonical metric. 
  \end{remark}

  \begin{remark}
    If $\calf$ is closed under finite index supergroups, i.e., if 
    $\calf = \Vcalf$ then there is no loss of generality in assuming that
    $Z$ is the $N$-skeleton of the model for
    $E_\calf G$ discussed in Example~\ref{ex:E_Vcaf-G-as-full-simpl-cx}.
    This holds because there is always a $G$-map
    $Z^{(0)} \to S := \coprod_{F \in \calf} G / F$
    and this map extends to a simplicial map
    $Z \to \Delta(S)^{(N)}$.
    Barycentric subdivision only changes the metric
    on the $N$-skeleton in a controlled (depending on $N$)
    way.
  \end{remark}

  \begin{remark} 
    There is also a converse to Theorem~\ref{thm:alg-thin-h-cob-thm}.
    If $a \in K_1(R[G])$ lies in the image of the
    assembly map $\alpha_\calf$ then there is some $N$ such that 
    it can for any $\e > 0$ be realized by an $\e$-automorphism 
    over an $N$-dimensional simplicial complex $Z$ with a simplicial
    $G$-action all whose isotropy groups belong to $\calf$.
    The simplicial complex can be taken to be the $N$-skeleton
    of a simplicial complex model for $E_\calf G$.

    This is a consequence of the description of the assembly map
    as a forget-control map as for example 
    in~\cite[Cor.~6.3]{Bartels-Farrell-Jones-Reich(2004)}.
  \end{remark}

  \begin{remark}
    It is not hard to extend the theory of geometric $R[G]$-modules 
    from rings to additive categories.
    In this case one considers collections $(A_x)_{x \in X}$ where
    each $A_x$ is an object from $\cala$.
    In fact~\cite[Thm.~5.3]{Bartels-Lueck(2012annals)}, which implies
    Theorem~\ref{thm:alg-thin-h-cob-thm}, is formulated 
    using additive categories as coefficients.
  \end{remark}

  \begin{remark}
    Results for $K_1$ often imply results for $K_i$, $i \leq 0$,
    using suspension rings.
    For a ring $R$, there is a suspension ring $\Sigma R$ with the
    property that $K_i (R) =  K_{i+1} (\Sigma R)$~\cite{Wagoner(1972)}.
    This construction can be arranged to be compatible with group rings: 
    $\Sigma (R[G]) = (\Sigma R)[G]$.
    A consequence of this is that for a fixed group $G$ 
    the surjectivity of 
    $\alpha_\calf \colon H^G_1(E_\calf G; \bfK_R) \to K_1 (R[G])$
    for all rings $R$ implies the surjectivity of
    $\alpha_\calf$ for all $i \leq 1$, 
    compare~\cite[Cor.~7.3]{Bartels-Farrell-Jones-Reich(2004)}.
   
    Because of this trick there is no need for a version of
    Theorem~\ref{thm:alg-thin-h-cob-thm} for $K_i$, $i \leq 0$.
  \end{remark}

  \subsection*{Higher $K$-theory}
  We end this section by a brief discussion of a version of
  Theorem~\ref{thm:alg-thin-h-cob-thm} for higher $K$-theory.
  Because there is no good concrete description of elements in
  higher $K$-theory it will use slightly more abstract language.

  Let $p_n \colon X_n \to Z_n$ be a sequence of $G$-maps where
  each $X_n$ is a free $G$-space and each $Z_n$ is a simplicial
  $(G,\calf)$-complex of dimension $N$.
  Define a category $\calc$ as follows.
  Objects of $\calc$ are sequences $(M_n)_{n \in \IN}$ where
  for each $n$, $M_n$ is a geometric $R[G]$-module over $X_n$.
  A morphism $(M_n)_{n \in \IN} \to (N_n)_{n \in \IN}$ in $\calc$
  is given by a sequence $(f_n)_{n \in \IN}$ of $R[G]$-linear maps
  $f_n \colon \bigoplus_{x \in X_n} (M_n)_x \to 
                \bigoplus_{x \in X_n} (N_n)_x$
  satisfying the following condition: there is $\alpha > 0$ such that
  for each $n$, $f_n$ is $\frac{\alpha}{n}$-controlled over $Z_n$. 
  For each $k \in \IN$,
  \begin{equation*}
    (M_n)_{n \in \IN} \mapsto \bigoplus_{x \in X_k} (M_k)_x
  \end{equation*}
  defines a functor $\pi_k$ from
  $\calc$ to the category of finitely generated free $R[G]$-modules.
  The following is a variation of~\cite[Cor.~4.3]{Bartels(2003a)}.
  It can be proven using~\cite[Thm.~7.2]{Bartels-Lueck-Reich(2008hyper)}.

  \begin{theorem} \label{thm:thin-h-cob-higher}
    Let $a \in K_* ( R[G] )$. 
    Suppose that there is $A \in K_*(\calc)$ such that 
    for all $k$ 
    \begin{equation*}
       (\pi_k)_* (A) = a.
    \end{equation*}
    Then $a$ belongs to the image of 
    $\alpha_\calf \colon H_*^G(E_\calf G; \bfK_R) \to K_*(R[G])$.
  \end{theorem}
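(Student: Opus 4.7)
The plan is to interpret $\calc$ as a germ-at-infinity controlled category and reduce the claim to the squeezing theorem for higher $K$-theory from \cite{Bartels(2003a), Bartels-Lueck-Reich(2008hyper)}, thereby lifting the algebraic thin $h$-cobordism Theorem~\ref{thm:alg-thin-h-cob-thm} to all $K_*$.

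First I would unpack the defining feature of $\calc$: every morphism $(f_n)_{n \in \IN}$ comes with a uniform constant $\alpha > 0$ so that $f_n$ is $\alpha/n$-controlled over $Z_n$, which means that as $n \to \infty$ the morphisms become arbitrarily well controlled. For each $\e > 0$ let $\cald^{\e}_n$ denote the category of geometric $R[G]$-modules over $X_n$ with $\e$-controlled morphisms over $Z_n$. Then $\calc$ maps, for each fixed $\alpha$, into $\prod_{n} \cald^{\alpha/n}_n$ (and is essentially the colimit of these as $\alpha$ varies). Passing to $K$-theory, a class $A \in K_*(\calc)$ therefore produces, for every $n$, a lift $A_n \in K_*(\cald^{\alpha/n}_n)$ of the class $a = (\pi_n)_*(A) \in K_*(R[G])$.

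Second I would invoke the higher $K$-theory squeezing theorem of \cite[Cor.~4.3]{Bartels(2003a)}, reinforced by \cite[Thm.~7.2]{Bartels-Lueck-Reich(2008hyper)}: a class in $K_*(R[G])$ which admits, for arbitrarily small $\e > 0$, a lift to the $K$-theory of $\e$-controlled geometric modules over some $X \to Z$ with $Z$ a simplicial $(G,\calf)$-complex of dimension at most $N$, must already lie in $\im \alpha_\calf$. Feeding this theorem the lifts $A_n$ for $n$ so large that $\alpha/n < \e$, the desired conclusion $a \in \im \alpha_\calf$ drops out.

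The main obstacle is the passage from $K_1$ to higher $K$-theory. At the level of $K_1$ the argument is essentially a direct application of Theorem~\ref{thm:alg-thin-h-cob-thm}: a representing automorphism $(f_n)$ of $A$ has uniform control bound $\alpha$, so for $n > \alpha / \e_N$ the component $f_n$ is an $\e_N$-automorphism representing $a$, and Theorem~\ref{thm:alg-thin-h-cob-thm} places $[f_n] = a$ in the image of $\alpha_\calf$. For higher $K_*$ there are no such explicit generators, so the squeezing has to be performed at the spectrum level, and one must genuinely use that $A$ is a single class in $K_*(\calc)$ and not merely a compatible system of classes; this spectrum-level squeezing is precisely the content of \cite[Cor.~4.3]{Bartels(2003a)} and \cite[Thm.~7.2]{Bartels-Lueck-Reich(2008hyper)}, so the remaining work consists in packaging the category $\calc$ into the form required by those theorems.
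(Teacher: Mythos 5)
The paper gives no proof of this theorem; it merely points to \cite[Cor.~4.3]{Bartels(2003a)} and \cite[Thm.~7.2]{Bartels-Lueck-Reich(2008hyper)}, which are exactly the references you identify, so in spirit your plan matches the paper's intent. Your $K_1$-argument in the final paragraph is correct and clean: a representing automorphism $(f_n)$ has a uniform bound $\alpha$, so for $n > \alpha/\e_N$ the component $f_n$ is an $\e_N$-automorphism, $[f_n] = (\pi_n)_*(A) = a$, and Theorem~\ref{thm:alg-thin-h-cob-thm} applies directly.

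There is, however, a genuine flaw in the middle step. You write, ``let $\cald^{\e}_n$ denote the category of geometric $R[G]$-modules over $X_n$ with $\e$-controlled morphisms over $Z_n$,'' and then pass to $K_*(\cald^{\alpha/n}_n)$. No such category exists: composing an $\e$-controlled morphism with an $\e'$-controlled one produces only an $(\e+\e')$-controlled morphism, so $\e$-controlled morphisms are not closed under composition. This is exactly the point made in Remark~\ref{rem:obstruction} of the paper. Consequently the ``lift $A_n \in K_*(\cald^{\alpha/n}_n)$'' is undefined, and your version of the squeezing theorem --- phrased as a statement about classes admitting lifts to ``$K$-theory of $\e$-controlled geometric modules'' --- is likewise not a statement one can make. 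The device that replaces $\cald^\e_n$ in the cited sources is the germ/cone construction: one works over an open cone on $Z$ and passes to a quotient category in which every morphism has, for each $\e>0$, an $\e$-controlled representative; the category $\calc$ in the theorem is already of this germ type (the $\alpha/n$ bound is precisely what makes composition work while forcing control to $0$ at infinity). The remaining work you correctly flag --- ``packaging $\calc$ into the form required by those theorems'' --- is therefore the actual content of the proof, and it cannot go through the intermediate $\cald^\e_n$; it must be an identification of $\calc$ with (a full subcategory of) the obstruction/germ categories appearing in \cite[Thm.~7.2]{Bartels-Lueck-Reich(2008hyper)}, after which $(\pi_k)_*$ is interpreted as the germ-to-object comparison and the desired containment in $\im\alpha_\calf$ falls out of that theorem.
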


  \section{Conditions that imply the Farrell-Jones Conjecture}
    \label{sec:thms-ABC}

  In~\cite{Bartels-Lueck(2012annals), Bartels-Lueck-Reich(2008hyper)}
  the Farrell-Jones Conjecture is proven for hyperbolic and 
  $\CAT(0)$-groups.
  Both papers take a somewhat axiomatic point of view.
  They both contain careful (and somewhat lengthy) 
  descriptions of conditions on groups that 
  imply the Farrell-Jones conjecture.
  The conditions in the two papers are closely related to each 
  other. 
  A group satisfying them is said to be~\emph{transfer reducible} 
  over a given family of subgroups in~\cite{Bartels-Lueck(2012annals)}.
  Further variants of these conditions are introduced 
  in~\cite{Bartels-Lueck-Reich-Rueping(2012),Wegner(2012CAT0)}.
  The existence of all these different versions of these conditions
  seem to me to suggest that we have not found the ideal formulation
  of them yet.
  The notion of transfer reducible groups 
  (and all its variations) can be viewed as an axiomatization 
  of the work of Farrell-Jones using the geodesic flow that began 
  with~\cite{Farrell-Jones(1986a)}. 
  Somewhat different conditions 
  -- related to work of Farrell-Hsiang~\cite{Farrell-Hsiang(1978b)} -- 
  are discussed in~\cite{Bartels-Lueck(2011method)}.

  \subsection*{Transfer reducible groups -- strict version}

  Let $R$ be a ring and $G$ be a group.

  \begin{definition}
    \label{def:transfer-space}
    An \emph{$N$-transfer space} $X$ is a compact 
    contractible metric space
    such that the following holds.

    For any $\delta > 0$ there exists a simplicial complex $K$ of 
    dimension 
    at most $N$ and continuous maps and homotopies $i \colon X \to K$, 
    $p \colon K \to X$,
    and $H \colon p \circ i \to \id_X$ such that for any $x \in X$ 
    the diameter of $\{ H(t,x) \mid t \in [0,1] \}$ is at most $\delta$. 
  \end{definition}

  \begin{example}
    Let $T$ be a locally finite simplicial tree.
    The compactification $\overline{T}$ of $T$ 
    by equivalence classes of geodesic rays is a $1$-transfer space.
  \end{example}

  \begin{ABCtheorem}
    \label{thm:transfer-red-strict}
    Suppose that $G$  is finitely generated by $S$.
    Let $\calf$ be a family of subgroups of $G$.
    Assume that there is $N \in \IN$ such that for any $\e > 0$ there are
    \begin{enumerate}
    \item \label{thm:transfer-red-strict:space}
      an $N$-transfer space $X$ equipped with
      a $G$-action, 
    \item \label{thm:transfer-red-strict:complex}
      a simplicial $(G,\calf)$-complex $E$ of dimension at most $N$, 
    \item \label{thm:transfer-red-strict:map}
      a map $f \colon X \to E$ that is $G$-equivariant up to $\e$:
      $d^1(f(s \cdot x), s \cdot f(x)) \leq \e$ for all 
      $s \in S$, $x \in X$.  
    \end{enumerate}
    Then $\alpha_{\calf} \colon H^G_*(E_\calf G;\bfK_R) \to K_*(RG)$ is an
    isomorphism. 
  \end{ABCtheorem}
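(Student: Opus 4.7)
The plan is to establish surjectivity of $\alpha_\calf$ via the algebraic thin $h$-cobordism theorem (Theorem~\ref{thm:alg-thin-h-cob-thm}) for $K_1$, and via its higher-$K$-theory refinement Theorem~\ref{thm:thin-h-cob-higher} in higher degrees; degrees $* \leq 0$ are handled by the suspension ring trick from Section~\ref{sec:controlled-algebra}. Injectivity is then obtained by running the whole argument with additive category coefficients and invoking the equivalence of surjectivity and bijectivity for the Farrell-Jones Conjecture with coefficients, as recorded in the remark following Proposition~\ref{prop:inheritance-for-extensions}.

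The central ingredient is a transfer functor. Fix a generating set $S$ and a sequence $\e_n \searrow 0$. Applying the hypothesis with $\e = \e_n$ supplies, for each $n$, an $N$-transfer space $X_n$ with $G$-action, a simplicial $(G,\calf)$-complex $E_n$ of dimension at most $N$, and an $\e_n$-equivariant map $f_n \colon X_n \to E_n$. The $N$-transfer space property provides a dimension-$\leq N$ simplicial complex $K_n$ together with maps $i_n \colon X_n \to K_n$, $p_n \colon K_n \to X_n$, and a homotopy $H_n \colon p_n \circ i_n \simeq \id_{X_n}$ of track length at most $\delta_n \searrow 0$. From these data I would assemble a free $G$-space $Y_n$ built out of $G$, $K_n$ and $X_n$, equipped with a $G$-equivariant map $Y_n \to E_n$ made from $f_n$ and $p_n$. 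Feeding the sequence $(Y_n \to E_n)_n$ into Theorem~\ref{thm:thin-h-cob-higher} defines the category $\calc$. The transfer functor $T$ sends a finitely generated free $R[G]$-module $M$ to the sequence $n \mapsto M \otimes_R C_*(K_n)$, viewed as a chain complex of geometric $R[G]$-modules over $Y_n$, and sends an endomorphism $\phi$ of $M$ to $\phi \otimes \id_{C_*(K_n)}$. Because $K_n$ is homotopy equivalent to the contractible space $X_n$, the chain complex $C_*(K_n)$ is chain-homotopy equivalent to $R$ concentrated in degree $0$; consequently, on $K$-theory the composition of $T$ with each projection $\pi_k$ is the identity, so every $a \in K_*(R[G])$ has a preimage $A \in K_*(\calc)$ with $(\pi_k)_*(A) = a$ for all $k$. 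Theorem~\ref{thm:thin-h-cob-higher} then yields $a \in \im(\alpha_\calf)$.

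The main obstacle is the control estimate verifying that $T$ really lands in $\calc$, i.e.\ that the transferred morphisms are $\alpha/n$-controlled over $E_n$ for some constant $\alpha$ independent of $n$. Any $R[G]$-linear endomorphism of a finitely generated free $R[G]$-module involves only finitely many group elements as matrix coefficients, each of bounded $S$-word length. The displacement in $E_n$ produced by such an element on the transferred module is estimated by combining the approximate equivariance of $f_n$ (with error $\e_n$ per generator, iterated along the word) with the $\delta_n$-short tracks of $H_n$ (which absorb the discrepancy between computations on $K_n$ and $X_n$). Choosing $\e_n, \delta_n$ small enough relative to $1/n$ and to the word lengths at stake then yields the required $\alpha/n$-control. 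Making this precise requires careful pairing of the geometric homotopy $H_n$ with the chain complex $C_*(K_n)$, so that the group action on the transferred module is realized at the chain level by maps of small support in $E_n$, and a careful matching of scale parameters (word length, $\e_n$, $\delta_n$, and the effect of barycentric subdivision on the $l^1$-metric on $E_n$). This is where the full strength of the $N$-transfer space hypothesis — particularly the contractibility of $X_n$ and the existence of the short homotopies $H_n$ — is put to use, and I expect this matching of scales to be by some margin the most delicate part of the argument.
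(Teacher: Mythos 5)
Your overall strategy matches the paper's: reduce surjectivity of $\alpha_\calf$ to the algebraic thin $h$-cobordism theorem and Theorem~\ref{thm:thin-h-cob-higher}, build a controlled transfer from the $N$-transfer space $X$, and get injectivity via additive-category coefficients where surjectivity implies bijectivity. However, the transfer you write down, $\phi \mapsto \phi \otimes \id_{C_*(K_n)}$, does \emph{not} produce controlled morphisms over $E_n$, and this is not a cosmetic omission but the crux of the whole theorem. Write an $R[G]$-linear endomorphism as $\phi(h \otimes v) = \sum_{g \in T} hg^{-1} \otimes \phi_g(v)$ for $h \in G$ and a finite $T \subseteq G$. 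The support of $\phi \otimes \id$ over $Y_n = G \times K_n$ consists of pairs $((hg^{-1},k),(h,k))$ with the $K_n$-coordinate unchanged; pushed to $E_n$ via $(h,x) \mapsto h f_n(x)$, these land at $hg^{-1}f_n(p_n(k))$ and $hf_n(p_n(k))$, whose $d^1$-distance equals $d^1(g^{-1}f_n(p_n(k)),f_n(p_n(k)))$ and has no reason to be small. In particular hypotheses~\ref{thm:transfer-red-strict:space} and~\ref{thm:transfer-red-strict:map} are never actually used. The paper's transfer instead pairs each $g \in T$ with a chain map $\varphi_g \colon C_* \to C_*$ approximately realizing the $g$-action on $X_n$ (built from the $G$-action on $X_n$ and the domination $p_n, i_n, H_n$), and forms $\Psi = \sum_{g \in T} g \otimes \varphi_g \otimes \phi_g$. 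Then $\supp \varphi_g$ contains only pairs $(x',x)$ with $d(x',gx)$ small, so that $g^{-1}f_n(x') \approx g^{-1}f_n(gx) \approx f_n(x)$ by the approximate equivariance of $f_n$; this is where the contraction in the hypotheses is cashed in. Your later paragraph about ``iterating the approximate equivariance of $f_n$ along the word'' and letting $H_n$ ``absorb the discrepancy between $K_n$ and $X_n$'' is describing exactly the control estimate for $\Psi$, and is simply inconsistent with the formula $\phi \otimes \id$ you actually wrote.

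There is a second gap for higher $K$-theory: even the corrected $\Psi$ is not functorial in $\phi$, because $G$ acts only up to homotopy on $C_*(K_n)$ (the $\varphi_g$ satisfy $\varphi_g \circ \varphi_h \simeq \varphi_{gh}$, not equality), so $\phi \mapsto \Psi$ does not define the element $A \in K_*(\calc)$ that Theorem~\ref{thm:thin-h-cob-higher} requires. The paper repairs this by transferring to the singular chain complex $C_*^\sing(X_n)$, on which $G$ does act strictly, filtered by the subcomplexes spanned by small simplices, landing in a Waldhausen category $\widetilde{\ch}_{\hfd}\calc$ whose $K$-theory agrees with that of $\calc$. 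Relatedly, your assertion that $C_*(K_n)$ is chain-homotopy equivalent to $R$ in degree~$0$ is false in general: $K_n$ merely $\delta_n$-dominates $X_n$, and $X_n$ dominates $K_n$ only if one strengthens the $N$-transfer structure to a $\delta$-homotopy equivalence (which the paper does for expository convenience, flagging that otherwise one must pass to the idempotent completion).
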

    
  \begin{remark} \label{rem:hyperbolic-are-transfer-strict}
    It follows from~\cite{Bartels-Lueck-Reich(2008cover)} that
    Theorem~\ref{thm:transfer-red-strict} (with $\calf$ the family of
    virtually cyclic subgroups $\VCyc$) applies to hyperbolic groups.
  \end{remark}

  \begin{example}
    Let $G$ be a group and $K$ be a finite contractible simplicial complex
    with a simplicial $G$-action. 
    Then for the family $\calf := \calf_K$ the assembly map   
    $\alpha_{\calf} \colon H^G_*(E_\calf G;\bfK_R) \to K_*(RG)$ is an
    isomorphism. 
    This follows from Theorem~\ref{thm:transfer-red-strict}
    by setting $N := \dim K$ and 
    $X := K$, $E := K$, $f := \id_K$ (for all $\e > 0$).
    Since $K$ is finite, the group of simplicial automorphisms of $K$ is also
    finite. 
    It follow that for all $x \in K$ the isotropy group $G_x$ has finite index
    in $G$.

    The assumptions of Theorem~\ref{thm:transfer-red-strict} should be viewed 
    as a weakening of this example.
    The properties of $K$ are reflected in requirements on $X$ or on $E$
    and the existence of the map $f$ yields a strong relationship between
    $X$ and $E$. 
  \end{example}

  \begin{remark} \label{rem:willet-yu}
     Rufus Willet and Guoliang Yu pointed out that the assumption
     of Theorem~\ref{thm:transfer-red-strict} implies that the group $G$
     has finite asymptotic dimension, provided
     there is a uniform bound on the asymptotic dimension of 
     groups in $\calf$.
     The latter assumptions is of course satisfied for the family of
     virtually cyclic groups $\VCyc$. 
  \end{remark}

  \begin{remark}
    Martin Bridson pointed out that the assumptions of 
    Theorem~\ref{thm:transfer-red-strict} are formally
    very similar to the concept of  amenability for actions 
    on  compact spaces.
    The main difference is that in the latter context $E$ is replaced
    by the (infinite dimensional) space of probability measures on $G$.
  \end{remark}

  \begin{remark}  \label{rem:is-a-reformulation}
      Theorem~\ref{thm:transfer-red-strict} is a minor reformulation
      of~\cite[Thm.~1.1]{Bartels-Lueck-Reich(2008hyper)}.
      In this reference instead of the existence of $f$ the existence of
      certain covers $\calu$ of $G \x X$ are postulated.
      But the first step in the proof is to use a partition of unity to
      construct a $G$-map from $G \x X$ to the nerve $|\calu|$ of $\calu$.
      Under the assumptions formulated in 
      Theorem~\ref{thm:transfer-red-strict}
      this map is simply $(g,x) \mapsto g \cdot f( g^{-1} x)$.
      
      Avoiding the open covers makes the theorem easier to state, but
      there is no real mathematical difference.
  \end{remark}

  \begin{remark}  \label{rem:surj-on-finite-skeleton}
      The proof of Theorem~\ref{thm:transfer-red-strict} 
      in~\cite{Bartels-Lueck-Reich(2008hyper)} really shows a little bit 
      more:
      there is $M$ (depending on $N$) such that the restriction of
      $\alpha_\calf$ to
      $H^G_*(E_\calf G^{(M)};\bfK_R)$ is surjective.
      For arbitrary groups and rings with non-trivial $K$-theory in 
      infinitely   many negative degrees there will be no such $M$.
      It is reasonable to expect that groups satisfying the assumptions of
      Theorem~\ref{thm:transfer-red-strict} will also admit a finite 
      dimensional model for the space $E_\calf G$. 
  \end{remark}

  \begin{remark} \label{rem:isotropy-of-X-belongs-to-calf}
      Let $E$ be a simplicial complex of dimension $N$.  
      Let $g$ be a simplicial automorphism of $E$.
      Let $x = \sum_{v \in E^{(0)}} x_v \cdot v$ 
      be a point of $E$.
      Let $\supp x := \{ v \in E^{(0)} \mid  x_v \neq 0 \}$.
      It is a disjoint union of the sets
      \begin{eqnarray*}
        P_x & := & 
           \{ v \in \supp x \mid 
              \forall n \in \IN \; : \; g^n \in \supp x \}, \\
        D_x & := &
           \{ v \in \supp x \mid 
              \exists n \in \IN \; : \; g^n \not\in \supp x \}. 
      \end{eqnarray*}
      Observe that for $v \in D_x$, we have $d^1(x,gx) \geq x_v$.
      Assume now that $d^1(x,gx) < \frac{1}{N+1}$.
      As $\sum_v x_v = 1$ there is a vertex $v$ with 
      $v \geq \frac{1}{N+1}$.
      Such a vertex $v$ belongs then to $P_x$ and it
      follows that $\{ g^n v \mid n \in \IN \}$ is finite
      and spans a simplex of $E$ whose barycenter is fixed by $g$. 
    
      Assume now that $f \colon X \to E$ is as 
      in assumption~\ref{thm:transfer-red-strict:map} 
      of Theorem~\ref{thm:transfer-red-strict}.
      If $G_x$ is the isotropy group for $x \in X$
      (and if $G_x$ is finitely generated by $S_x$ say) then
      if $\e$ is sufficiently small it follows
      that $d^1(f(x),g f(x)) < \frac{1}{N+1}$.
      The previous observation implies then $G_x \in \calf$.

      On the other hand one can apply the Lefschetz fixed point theorem
      to the simplicial dominations to $X$ and finds for fixed $g \in G$ 
      and  each $\e > 0$ a point $x_\e \in X$ such that 
      $d(g x_\e, x_\e) \leq \e$.
      The compactness of $X$ implies that there is a fixed point 
      in $X$ for  each element of $G$.
      Altogether, it follows that $\calf$ will necessarily contain the 
      family of cyclic subgroups.   
  \end{remark}

  \begin{remark} \label{rem:hyperelemantary-fixed-points}
    Frank Quinn has shown that one can replace the family of 
    virtually cyclic groups in the Farrell-Jones Conjecture by
    the family of (possibly infinite) hyper-elementary 
    groups~\cite{Quinn(2012virtab)}.
     
    It is an interesting question whether one can 
    (maybe using Smith theory) 
    build on the  argument from 
    Remark~\ref{rem:isotropy-of-X-belongs-to-calf}
    to conclude that in order for the assumptions of 
    Theorem~\ref{thm:transfer-red-strict} to be satisfied 
    it is necessary for $\calf$ to contain this family of
    (possibly infinite) hyper-elementary groups.
  \end{remark}

  \begin{remark} 
    One can ask for which $N$-transfer spaces $X$ with a $G$-action
    it is possible to find for all $\e > 0$ a map $f \colon X \to E$
    as in assumptions~\ref{thm:transfer-red-strict:complex} 
    and~\ref{thm:transfer-red-strict:map}.

    Remark~\ref{rem:isotropy-of-X-belongs-to-calf} shows that a 
    necessary condition is
    $G_x \in \calf$ for all $x \in X$,
    but it is not clear to me that this condition is not
    sufficient.

    In light of the observation of Willet and Yu from 
    Remark~\ref{rem:willet-yu} a related 
    question is whether there is a group $G$ of 
    infinite asymptotic dimension 
    for which there is an $N$-transfer space with a $G$-action
    such that the asymptotic dimension of $G_x$, $x \in X$
    is uniformly bounded.
  \end{remark}

  \begin{remark}
    The reader is encouraged to try to check that finitely generated 
    free groups satisfy the assumptions of 
    Theorem~\ref{thm:transfer-red-strict} with respect to the family 
    of (virtually) cyclic subgroups.
    In this case one can use the compactification $\bar T$ of the
    usual tree by equivalence classes of geodesic rays 
    as the transfer space.
    I am keen to see a proof of this that is easier than
    the one coming out of~\cite{Bartels-Lueck-Reich(2008cover)} and 
    avoids flow spaces.
    Maybe a clever application of Zorn's Lemma could be useful here.

    I am not completely sure whether or not it is possible to write down 
    the maps $f \colon \bar T \to E$ in 
    assumption~\ref{thm:transfer-red-strict:map} explicitly for 
    finitely generated  free groups. 
  \end{remark}

  \subsection*{Transfer reducible groups -- homotopy version}

  Let $R$ be a ring.

  \begin{definition}
    Let $G = \langle \, S \mid R \, \rangle$ be a finitely presented group.
    A homotopy action of $G$ on a space $X$ is given by
    \begin{itemize}
    \item for all $s \in S \cup S^{-1}$ maps $\varphi_s \colon X \to X$, 
    \item for all $r = s_1 \cdot s_2 \cdots  s_l \in R$ homotopies
       $H_r \colon \varphi_{s_1} \circ  \varphi_{s_2} \circ 
             \dots \circ \varphi_{s_l} \to \id_X$
    \end{itemize}
  \end{definition}

  \begin{ABCtheorem}
    \label{thm:transfer-red-homotopy}
    Suppose that $G  = \langle \, S \mid R \, \rangle$  is
    a finitely presented group.
    Let $\calf$ be a family of subgroups of $G$.
    Assume that there is $N \in \IN$ such that for any $\e > 0$ there are
    \begin{enumerate}
    \item \label{thm:transfer-red-homotopy:space}
      an $N$-transfer space $X$ equipped with
      a homotopy $G$-action $(\varphi,H)$, 
    \item \label{thm:transfer-red-homotopy:complex}
      a simplicial $(G,\calf)$-complex $E$ of dimension at most $N$, 
    \item \label{thm:transfer-red-strict:map}
      a map $f \colon X \to E$ that is $G$-equivariant up to $\e$:
      for all $x \in X$, $s \in S \cup S^{-1}$, $r \in R$
      \begin{itemize}
      \item 
        $d^1(f( \varphi_s (x) ), s \cdot f(x)) \leq \e$,
      \item 
        $\{ H_r(t,x) \mid t \in [0,1] \}$ has diameter at most $\e$.
      \end{itemize} 
    \end{enumerate}
    Then $\alpha_{\calf} \colon H^G_i(E_\calf G;\bfK_R) \to K_i(RG)$ is an
    isomorphism for $i \leq 0$ and surjective for $i = 1$.
  \end{ABCtheorem}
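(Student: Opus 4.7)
The strategy is to follow the same four-step pattern as for Theorem~\ref{thm:transfer-red-strict}---pick a $K$-theory class, transfer it to a controlled representative over $X$, push the result forward to $E$ via $f$, and invoke the algebraic thin $h$-cobordism theorem---while replacing strict equivariance of the transferred data by equivariance up to (coherent) chain homotopy. The conclusion is weaker because a homotopy action carries less coherence than a strict action: only a single $K_1$-class in a controlled chain complex category can be extracted, not a class in higher $K$-theory. The plan is therefore to prove surjectivity of $\alpha_\calf$ at $K_1$ for arbitrary coefficient rings (and, by the same argument, for arbitrary additive coefficient categories with $G$-action); by the suspension trick from the end of Section~\ref{sec:controlled-algebra} this gives surjectivity in all degrees $i \leq 1$, and by the long exact sequence of the (homotopy) fibre of $\alpha_\calf$ together with Remark~\ref{rem:conj-as-K=0} applied to the obstruction category one upgrades this to the isomorphism for $i \leq 0$.

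Fix $\e > 0$, let $X$, $(\varphi, H)$, $E$, $f$ be as provided, and fix a class $a \in K_1(R[G])$ represented by an automorphism $\phi$ of $R[G]^n$. Using the $N$-transfer space property I would choose $\delta > 0$ small and pick simplicial approximation maps $\iota \colon X \to K$, $p \colon K \to X$ and a homotopy $H' \colon p \circ \iota \simeq \id_X$ whose tracks have diameter at most $\delta$, where $K$ is a simplicial complex of dimension at most $N$. The cellular chain complex $C_*(K)$ is a bounded complex of finitely generated free $R$-modules; pushing it forward along $p$ to $X$, tensoring with $R[G]^n$, and applying $\phi$ in the second factor produces a chain complex of geometric $R[G]$-modules over $X$ together with a chain automorphism $\tilde\phi$ whose class recovers $a$ after forgetting the geometric structure.

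The homotopy $G$-action operates on this transferred complex only up to chain homotopy: each $\varphi_s$ induces a chain map $\Phi_s$ between the complex and its $s$-translate, and each relator homotopy $H_r$ for $r = s_1 \cdots s_l$ yields a chain homotopy between $\Phi_{s_1} \circ \cdots \circ \Phi_{s_l}$ and the identity. Together with $H'$ this provides exactly enough data to organize $\tilde\phi$ into a well-defined element in $K_1$ of an appropriate controlled chain complex category over $X$. Pushing this element forward along $f$ converts geometric control into $l^1$-control over $E$: the $\e$-equivariance of $f$ on generators and the $\e$-small diameter of the tracks of $f \circ H_r$ on relators, combined with the $\delta$-control coming from $H'$, yield a chain automorphism over $E$ whose control is bounded by a constant (depending on $N$, $S$, and $R$) times $\e + \delta$. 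Choosing $\e$ and $\delta$ sufficiently small relative to the constant $\e_N$ from Theorem~\ref{thm:alg-thin-h-cob-thm}, in its chain complex reformulation from~\cite[Thm.~5.3]{Bartels-Lueck(2012annals)}, one concludes that $a$ lies in the image of $\alpha_\calf$.

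The main obstacle is the coherence issue. A strict action produces genuine chain automorphisms and these can be fed into Theorem~\ref{thm:thin-h-cob-higher} to yield classes in all higher $K$-groups. A homotopy action, however, provides only the first layer of coherence: the relator homotopies $H_r$ are prescribed, but no higher homotopies relating different compositions of the $H_r$'s are specified. Turning the list $(\Phi_s, H_r)$ into a single honest $K_1$-class in a controlled chain complex category therefore requires delicate bookkeeping of the homotopy data---keeping track, for each relator, of how the transferred $H_r$ interacts with $\tilde\phi$ and with the homotopy $H'$---and this is the technical heart of the proof. It suffices for $K_1$ but resists extension to higher $K$-theory, which is exactly why the surjectivity claim stops at $i = 1$ and the isomorphism claim only survives the extra degree-drop of the fibre-sequence argument down to $i \leq 0$.
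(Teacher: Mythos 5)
Your proposal matches the paper's own account of Theorem~\ref{thm:transfer-red-homotopy}: the paper asserts that its proof runs parallel to the six-step outline for Theorem~\ref{thm:transfer-red-strict} in Section~\ref{sec:on-proof-thm-transfer-strict}, with Step~2 (building the controlled chain complex and bookkeeping the homotopy-action data) as the only substantively more complicated point, and your degree bookkeeping via the suspension trick plus the obstruction-category argument is exactly how one passes from $K_1$-surjectivity for all coefficient categories to the full statement. One conceptual point you should tighten: you attribute the failure of higher $K$-theory to the transferred complex $C_*$ receiving only a homotopy action, but even under the strict hypotheses of Theorem~\ref{thm:transfer-red-strict} the finite complex $C_*(K)$ carries only a homotopy $G$-action, since it lives over a simplicial domination $K$ of $X$ rather than over $X$ itself. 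The genuine obstruction to producing a class in $K_*(\calc)$ as required by Theorem~\ref{thm:thin-h-cob-higher} is functoriality of the transfer, which forces one to pass to the singular chain complex $C_*^{\sing}(X)$; that complex carries a strict $G$-action exactly when $G$ acts strictly on $X$, whereas under a homotopy action one needs the full tower of higher coherences---this is precisely what Wegner's strong homotopy actions supply, and is why his variant recovers the isomorphism in all degrees. A minor imprecision: the transferred $\tilde\phi$ is a chain homotopy self-equivalence rather than a chain automorphism, and the $K_1$-class is extracted via the self-torsion formalism of the paper's ``Digression on torsion,'' as in Step~4 of the outline.
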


  \begin{remark}
    It follows from~\cite{Bartels-Lueck(2012CAT(0)flow)} that 
    Theorem~\ref{thm:transfer-red-homotopy} applies to $\CAT(0)$-groups
    (where $\calf$ is the family of virtually cyclic groups).
    We will sketch the proof of this fact in Section~\ref{sec:flow-spaces}.

    Wegner introduced the notion of a strong homotopy action
    and proved a version of Theorem~\ref{thm:transfer-red-homotopy}
    where the conclusion is that $\alpha_\calf$ is an isomorphism in all
    degrees~\cite{Wegner(2012CAT0)}.
    This result also applies to $\CAT(0)$-groups.
  \end{remark}

  \begin{remark}
    Theorem~\ref{thm:transfer-red-homotopy} is a reformulation 
    of~\cite[Thm.~1.1]{Bartels-Lueck(2012annals)}
    (just as in Remark~\ref{rem:is-a-reformulation}).

    The assumptions of Theorem~\ref{thm:transfer-red-strict}
    feel much cleaner than the assumptions of 
    Theorem~\ref{thm:transfer-red-homotopy}.
    It would be very interesting if one could show, maybe using some 
    kind of limit that promotes a (strong) homotopy action to an actual 
    action, such  that the latter (or Wegner's variation of them) 
    do imply the former.
    
    In light of Remark~\ref{rem:willet-yu} this would imply in particular 
    that $\CAT(0)$-groups have finite asymptotic dimension and is therefore
    probably a difficult (or impossible) task. 
  \end{remark}

  \begin{remark}
    I do not know whether semi-direct products of the form 
    $\IZ^n \rtimes \IZ$ satisfy the assumptions of 
    Theorem~\ref{thm:transfer-red-homotopy}, for example if $\calf$ is 
    the family of abelian groups.
    On the other hand the Farrell-Jones Conjecture is known to hold 
    for such groups and more general for virtually poly-cyclic 
    groups~\cite{Bartels-Farrell-Lueck(2011cocomlat)}.
  \end{remark}

  \begin{remark}
    Remark~\ref{rem:surj-on-finite-skeleton} also applies to 
    Theorem~\ref{thm:transfer-red-homotopy}.
  \end{remark}

  \begin{remark}
    There is an  $L$-theory version of 
    Theorem~\ref{thm:transfer-red-homotopy}, 
    see~\cite[Thm.~1.1(ii)]{Bartels-Lueck(2012annals)}.
    There, the conclusion is that the assembly map $\alpha_{\calf_2}$
    is an isomorphism in $L$-theory where $\calf_2$ is the family of
    subgroups that contain a member of $\calf$ as a subgroup of index at
    most $2$.
    Of course $\VCyc = \VCyc_2$.
    There is no restriction on the degree $i$ in this 
    $L$-theoretic version  and so this also
    provides an $L$-theory version of 
    Theorem~\ref{thm:transfer-red-strict}.
  \end{remark}

  \subsection*{Farrell-Hsiang groups}

  \begin{definition}
    \label{def:hyperelemantary}
    A finite group $H$ is said to be \emph{hyper-elementary}
    if there exists a short exact sequence
    \begin{equation*}
       C \rightarrowtail H \twoheadrightarrow P 
    \end{equation*}
    where $C$ is a cyclic group and the order of $P$ is a prime power. 
  \end{definition}

  Quinn generalized this definition to infinite groups by allowing the 
  cyclic group to be infinite~\cite{Quinn(2012virtab)}.

  Hyper-elementary groups play a special role in $K$-theory
  because of the following result of Swan~\cite{Swan(1960a)}.
  For a group $G$ we denote by $\Sw(G)$ the Swan group of $G$.
  It can be defined as $K_0$ of the exact category of $\IZ[G]$-modules
  that are finitely generated free as $\IZ$-modules.
  This group encodes information about transfer maps in 
  algebraic $K$-theory.  

  \begin{theorem}[Swan]
    \label{thm:swan}
    For a finite group $F$ the induction maps
    combine to a surjective map
    \begin{equation*}
      \bigoplus_{H \in \calh(F)} \Sw(H) \twoheadrightarrow \Sw(F),
    \end{equation*}
   where $\calh(F)$ denotes the family of 
   hyper-elementary subgroups of $F$. 
  \end{theorem}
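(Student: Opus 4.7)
The plan is to follow Swan's original induction strategy, which is an integral analogue of the Artin–Brauer induction theorems for characters. The argument breaks into three stages: a Frobenius reciprocity principle that makes the image an ideal, a reduction to showing the class of the trivial representation lies in that ideal, and an integral induction step carried out prime by prime.

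First I would equip each $\Sw(F)$ with its tensor-product ring structure, obtained by tensoring $\IZ$-finite $\IZ[F]$-modules over $\IZ$ with the diagonal $F$-action; the unit is $[\IZ]$ with trivial action. Restriction is a ring homomorphism, and the projection formula yields the Frobenius reciprocity identity $\ind_H^F(x \cdot \res^F_H(y)) = \ind_H^F(x) \cdot y$ for every $H \leq F$. Consequently, the image $I$ of the claimed map is an ideal in $\Sw(F)$, so surjectivity reduces to the single statement $[\IZ] \in I$.

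For this reduction I would proceed one prime at a time. Artin's rational induction theorem, applied to the constant character $1$ of $F$, produces an integer $n$ and an identity in $\Sw(F) \otimes \IQ$ expressing $[\IZ]$ as a $\IQ$-linear combination of inductions from cyclic subgroups, so in particular $[\IZ] \in I \otimes \IZ[1/n]$. To promote this to an integral identity one must argue at each prime $p$ dividing $n$. Here hyper-elementary subgroups enter essentially: for each bad $p$ I would construct a $\IZ[H_p]$-lattice over a $p$-hyper-elementary subgroup $H_p = C_p \rtimes P_p$ (with $|C_p|$ coprime to $p$ and $P_p$ a $p$-group) whose induced class $\ind_{H_p}^F(e_p)$ agrees $p$-locally with a unit multiple of $[\IZ]$. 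The cyclic factor $C_p$ coprime to $p$ supplies exactly the flexibility needed to realise the $p$-local unit inside the integral Grothendieck group. A Chinese-remainder-style assembly then combines these local contributions into an integral expression $[\IZ] = \sum_i \ind_{H_i}^F(x_i)$ with every $H_i \in \calh(F)$, establishing the theorem.

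The main obstacle is the integral construction at each bad prime: one must exhibit an honest $\IZ[H_p]$-lattice, not merely a virtual character, whose induction is $p$-locally the unit. Brauer's classical induction argument only delivers virtual characters over $\IC$ (using elementary subgroups $C \times P$), and the descent to $\IZ$-coefficients is precisely what forces the enlargement from elementary to hyper-elementary subgroups $C \rtimes P$. Once these $p$-local lattices are in place, the remaining clearing of denominators and assembly of the pieces follow formally from the ideal structure of $I$ established at the outset.
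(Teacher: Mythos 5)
The paper offers no proof here; the theorem is quoted directly from Swan's 1960 paper, so there is no internal argument to compare against. Your plan does correctly identify the Brauer--Swan strategy: tensoring over $\IZ$ with the diagonal action makes $\Sw(F)$ a commutative ring with unit $[\IZ]$, the projection formula $\ind_H^F(x \cdot \res^F_H y) = \ind_H^F(x) \cdot y$ makes the image $I$ of the combined induction map an ideal, and surjectivity therefore reduces to the single containment $[\IZ] \in I$, which can be checked prime by prime once Artin's theorem gives $|F|\cdot[\IZ] \in I$. Your remark that hyper-elementary (rather than elementary) subgroups are exactly what makes the argument work with $\IZ$-coefficients is also the right intuition.

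The genuine gap is that the $p$-local step, which carries the entire content of the theorem, is labelled ``the main obstacle'' and then not carried out. What must be produced, for each prime $p$ dividing $|F|$, is a $\IZ$-linear combination of classes induced from $p$-hyper-elementary subgroups of $F$ that localizes at $p$ to a unit multiple of $[\IZ]$. Your description of a single subgroup $H_p$ and a single lattice $e_p$ is too optimistic: already in Artin's theorem one must sum over conjugacy classes of cyclic subgroups, and in the Brauer--Swan argument the analogous $p$-local element is a sum over $p$-hyper-elementary subgroups indexed by the $p$-regular conjugacy classes of $F$. There is also a smaller elision: Artin's rational induction theorem lives in $R_{\IQ}(F)$, and extracting from it the integral relation $|F|\cdot[\IZ] \in I$ in $\Sw(F)$ requires lifting the relevant virtual rational characters to $\IZ[H]$-lattices, which is a further (if routine) step. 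As written, the proposal is an accurate road map of Swan's proof, but its crucial construction is missing.
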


  Let $R$ be a ring and $G$ be a group.

  \begin{ABCtheorem}
    \label{thm:Farrell-Hsiang-groups}
    Suppose that $G$  is finitely generated by $S$.
    Assume that there is $N \in \IN$ such that for any $\e > 0$
    there are
    \begin{enumerate}
    \item a group homomorphism $\pi \colon G \to F$ where
       $F$ is finite,
    \item \label{thm:Farrell-Hsiang-groups:complex}
      a simplicial $(G,\calf)$-complex $E$ of dimension at most $N$ 
    \item \label{thm:Farrell-Hsiang-groups:map}
      a map $f \colon \coprod_{H \in \calh(F)} G / \pi^{-1}(H) \to E$
      that is $G$-equivariant up to $\e$:
      $d^1(f(s x), s \cdot f(x)) \leq \e$ for all 
      $s \in S$, $x \in \coprod_{H \in \calh(F)} G/\pi^{-1}(H)$.  
    \end{enumerate}
    Then $\alpha_{\calf} \colon H^G_*(E_\calf G;\bfK_R) \to K_*(RG)$ is an
    isomorphism. 
  \end{ABCtheorem}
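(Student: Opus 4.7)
The plan is to prove surjectivity of $\alpha_{\calf}$ on $K_1$ for arbitrary rings $R$. Isomorphism in the remaining degrees then follows using the suspension ring trick for $K_i$, $i \leq 0$, and by rerunning the argument in the sequence-based setting of Theorem~\ref{thm:thin-h-cob-higher} for higher $i$; the upgrade from surjectivity to bijectivity proceeds via the obstruction category refinement alluded to in Remark~\ref{rem:obstruction}. Fix $a = [\phi] \in K_1(R[G])$ represented by an $R[G]$-automorphism $\phi$ of $R[G]^n$, and let $S_\phi \subseteq G$ be the finite set such that the matrix of $\phi$ in the standard $R[G]$-basis connects positions $g, g' \in G$ only when $g^{-1}g' \in S_\phi$. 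The goal is to realize $a$ by an $\e_N$-controlled automorphism of a geometric $R[G]$-module over a free $G$-space equipped with a $G$-map to $E$, at which point Theorem~\ref{thm:alg-thin-h-cob-thm} places $a$ in the image of $\alpha_\calf$.

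The central algebraic input is Swan's Theorem~\ref{thm:swan} applied to $F$: there exist hyper-elementary subgroups $H_1, \dots, H_k \leq F$, integers $n_i$, and $\IZ[H_i]$-modules $M_i$ which are free and finitely generated over $\IZ$, satisfying $[\IZ_{\mathrm{triv}}] = \sum_i n_i [\ind_{H_i}^F M_i]$ in $\Sw(F)$. The rule $N \mapsto (-) \otimes_\IZ N$ with diagonal $G$-action on the tensor product defines an action of $\Sw(G)$ on $K_1(R[G])$ under which $[\IZ_{\mathrm{triv}}]$ acts as the identity. Pulling back Swan's identity along $\pi$ and applying Frobenius reciprocity yields
\[
  a = \sum_{i=1}^{k} n_i \, \ind_{G_i}^G \bigl( \res_{G_i}^G(a) \otimes_\IZ M_i \bigr) \quad \text{in } K_1(R[G]),
\]
where $G_i := \pi^{-1}(H_i)$. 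It therefore suffices to realize each summand $\Phi_i := \ind_{G_i}^G(\res_{G_i}^G(\phi) \otimes \id_{M_i})$ as a sufficiently controlled $R[G]$-automorphism of a geometric module.

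For each $i$, the underlying $R[G]$-module of $\Phi_i$ is free of some rank $r_i$, so it is realized as a geometric $R[G]$-module over $X_i := G$ (with the free left-regular $G$-action) by placing $R^{r_i}$ at each $g \in G$; the structural $G$-map $p_i \colon G \to E$ is the unique $G$-equivariant extension of $e \mapsto f([G_i])$, so that $p_i(g) = g \cdot f([G_i])$. The key observation is that the matrix of $\Phi_i$ in this standard $R[G]$-basis has entries in $R[G_i] \subseteq R[G]$, supported in a finite subset $S_i \subseteq G_i$ that depends only on $S_\phi$, a choice of coset representatives for $G/G_i$, and the $\IZ$-rank of $M_i$; in particular only positions $g, g' \in G$ with $g^{-1}g' \in S_i$ are connected. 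Since each $s \in S_i \subseteq G_i$ fixes the coset $[G_i]$, iterating the $\e$-equivariance of $f$ along a word in $S$ of length at most $L := \max_{i, s \in S_i} \mathrm{length}_S(s)$ representing $s$ yields
\[
  d^1_E(p_i(g), p_i(g')) \; = \; d^1_E\bigl(f([G_i]), \, s \cdot f([G_i])\bigr) \; \leq \; L \, \e, \quad s = g^{-1}g' \in S_i.
\]
Choosing $\e$ small enough that $L \e < \e_N$, Theorem~\ref{thm:alg-thin-h-cob-thm} places each $[\Phi_i]$, and hence $a$, in the image of $\alpha_\calf$.

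The main obstacle is the precise combinatorial control argument: one must verify that the matrix entries of $\Phi_i$ in the standard $R[G]$-basis are genuinely supported in $G_i$ with propagation bounded in terms of $S_\phi$ and the chosen coset representatives, and that the length $L$ together with the other constants $r_i$, $n_i$, $[G:G_i]$ can be bounded uniformly across the finitely many hyper-elementary summands, so that a single sufficiently small $\e$ works simultaneously for all of them. A secondary technical point is the passage to all degrees of $K_*$, which requires the suspension ring trick for $K_i$, $i \leq 0$, and the sequence-refinement Theorem~\ref{thm:thin-h-cob-higher} for higher $i$; the natural diagonal sequence of geometric modules assembled from the above construction supplies the required object in the category $\calc$.
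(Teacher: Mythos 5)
Your overall strategy is the right one, and you have correctly identified the two key inputs: Swan's theorem pulled back along $\pi$ combined with Frobenius reciprocity to write $a = \sum_i n_i\,\ind_{G_i}^G(\res_{G_i}^G(a)\otimes_\IZ M_i)$, and Theorem~\ref{thm:alg-thin-h-cob-thm} to finish once each summand is realized by a suitably controlled automorphism. But there is a genuine gap in the step where you place the geometric module and estimate control, and this gap is not a bookkeeping nuisance but the heart of why the discrete set $\coprod_i G/G_i$ appears in the hypotheses of Theorem~\ref{thm:Farrell-Hsiang-groups}.

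You realize $\Phi_i$ as a geometric $R[G]$-module over $X_i := G$ alone (one copy of $R^{r_i}$ at each $g$) with structural $G$-map $p_i(g) = g\cdot f([G_i])$, and you observe that the matrix of $\Phi_i$ in the standard $R[G]$-basis has entries in $R[G_i]$ supported on a finite set $S_i\subseteq G_i$. That observation is correct, but look at what $S_i$ actually consists of: if $\gamma_1,\dots,\gamma_m$ are coset representatives of $G/G_i$ and $g\in S_\phi$, then the matrix entries of $\Phi_i$ contain elements of the form $\gamma_k^{-1} g \gamma_j$. Your control estimate is $d^1(p_i(g),p_i(g'))\le L\,\e$ with $L:=\max_{s\in S_i}\mathrm{length}_S(s)$, and then you want to choose $\e$ so small that $L\e<\e_N$. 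But $L$ is not a fixed constant: the finite group $F$, hence $G_i$ and the coset representatives $\gamma_j$, are chosen \emph{after} $\e$ (the hypothesis gives them ``for any $\e>0$''), and as $\e\to 0$ the index $[G:G_i]$ will typically grow without bound, forcing the word lengths of the $\gamma_j$ (and hence of the $\gamma_k^{-1}g\gamma_j\in S_i$) to grow as well. So you cannot fix $L$ first and then shrink $\e$; the argument is circular and does not close.

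The fix is to place the transferred module over $G\times(\coprod_i G/G_i)$ rather than over $G$ alone, exactly parallel to Step 3 of the outline for Theorem~\ref{thm:transfer-red-strict}, with the second factor playing the role of the transfer space $X$. Concretely, $\ind_{G_i}^G\bigl(\res_{G_i}^G(R[G]^n)\otimes_\IZ M_i\bigr)$ is naturally a geometric $R[G]$-module over the free $G$-set $G\times_{G_i}G\cong G\times G/G_i$, with a copy of $R^n\otimes M_i$ at each point, and the induced automorphism $\Phi_i$ then only connects positions $(k,xG_i)$ and $(l,yG_i)$ with $xG_i=yG_i$ and $k^{-1}l\in S_\phi$ (equivalently, writing the transfer in the form $h\otimes u\otimes v\mapsto \sum_{g}hg^{-1}\otimes gu\otimes\psi_g(v)$ with $u\in\ind_{G_i}^GM_i$, it connects $(h,yG_i)$ to $(hg^{-1},gyG_i)$ for $g\in S_\phi$). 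Using the $G$-map $F(h,x)=h\cdot f(x)$ on $G\times\coprod_i G/G_i$, the relevant control estimate becomes $d^1\bigl(f(gx),\,g\cdot f(x)\bigr)\le L'\e$ with $L':=\max_{g\in S_\phi}\mathrm{length}_S(g)$ — and this $L'$ depends only on the fixed element $\phi$ and the fixed generating set $S$, not on $\e$, $F$, or the coset representatives. Now choosing $\e<\e_N/L'$ is legitimate and yields the $\e_N$-control needed for Theorem~\ref{thm:alg-thin-h-cob-thm}. The second factor $G/G_i$ is precisely where the data of the coset representatives is absorbed into the \emph{space} rather than into the \emph{propagation of the matrix}, and this is what makes the estimate uniform. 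Once you replace your indexing space and $G$-map accordingly, the rest of your argument (the Swan decomposition, the summation over $i$, the passage to negative and higher $K$-theory) goes through as you describe.
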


  \begin{remark}
    Theorem~\ref{thm:Farrell-Hsiang-groups} is proven 
    in~\cite{Bartels-Lueck(2011method)} building on work of
    Farrell-Hsiang~\cite{Farrell-Hsiang(1978b)}.
    The main difference to Theorems~\ref{thm:transfer-red-strict} 
    and~\ref{thm:transfer-red-homotopy} is that the transfer
    space $X$ is replaced by the discrete space 
    $\coprod_{H \in \calh(F)} G/\pi^{-1}(H)$.
    It is Swan's theorem~\ref{thm:swan} that replaces the contractibility
    of $X$.

    I have no conceptual understanding of
    Swan's theorem.
    For this reason  Theorem~\ref{thm:Farrell-Hsiang-groups} is to me 
    not as conceptually satisfying as 
    Theorem~\ref{thm:transfer-red-strict}.
    Moreover, I expect that a version of 
    Theorem~\ref{thm:Farrell-Hsiang-groups} for Waldhausen's $A$-theory
    will need a larger family than the family
    of hyper-elementary subgroups.
  \end{remark}

  \begin{remark}
    Groups satisfying the assumption of 
    Theorem~\ref{thm:Farrell-Hsiang-groups} are called
    \emph{Farrell-Hsiang groups with respect to $\calf$}
    in~\cite{Bartels-Lueck(2011method)}. 
  \end{remark}

  \begin{remark} \label{rem:poly-cyclic}
    Theorem~\ref{thm:Farrell-Hsiang-groups} can be used to
    prove the Farrell-Jones Conjecture for virtually  poly-cyclic 
    groups~\cite[Sec.~3 and 4]{Bartels-Farrell-Lueck(2011cocomlat)}. 
    We will discuss some semi-direct products of the form $\IZ^n \rtimes \IZ$
    in Section~\ref{sec:Z^nxZ-is-Farrell-Hsiang}.
  \end{remark}

  \begin{remark}
    Remark~\ref{rem:surj-on-finite-skeleton} also applies to 
    Theorem~\ref{thm:Farrell-Hsiang-groups}.
  \end{remark}  

  \begin{remark}
    Theorem~\ref{thm:Farrell-Hsiang-groups} holds without change
    in $L$-theory as well~\cite{Bartels-Lueck(2011method)}.
  \end{remark}

  \begin{remark}
    It would be good to find a natural common weakening of the
    assumptions in 
    Theorems~\ref{thm:transfer-red-strict},\ref{thm:transfer-red-homotopy}
    and~\ref{thm:Farrell-Hsiang-groups}
    that still implies the Farrell-Jones Conjecture.
    Ideally such a formulation should have similar
    inheritance properties as the Farrell-Jones Conjecture,
    see Propositions~\ref{prop:transitivity} 
    and~\ref{prop:inheritance-for-extensions}.
  \end{remark}

  \subsection*{Injectivity}
    It is interesting to note that injectivity of the assembly map 
    $\alpha_{\{1\}}$ or $\alpha_{\Fin}$ is known for seemingly much bigger 
    classes of groups, than the class of groups known to satisfy
    the Farrell-Jones Conjecture.
    Rational injectivity of the $L$-theoretic assembly map $\alpha_{\{1\}}$
    is of particular interest, as it implies Novikov's conjecture on
    the homotopy invariance of higher signatures.
    Yu~\cite{Yu(2000)} proved the Novikov conjecture for all
    groups admitting a uniform embedding into a Hilbert-space.
    This class of groups contains all groups of 
    finite asymptotic dimension.
    Integral injectivity of the assembly map $\alpha_{\{1\}}$ for 
    $K$- and $L$-theory is known
    for all groups that admit a finite $CW$-complex as a model for 
    $BG$ and
    are of finite decomposition complexity~\cite{
     Guentner-Tessera-Yu(2012geom-complx-top-rigid),
          Ramras-Tessear-Yu(2011)}.
    The latter property is a generalization of finite asymptotic 
    dimension.
    Rational injectivity  of the $K$-theoretic assembly map 
    $\alpha_{\{1\}}$ for the ring $\IZ$ is proven by
    B\"okstedt-Hsiang-Madsen~\cite{Boekstedt-Hsiang-Madsen(1993)}
    for all groups $G$ satisfying the  following homological 
    finiteness condition: for all $n$ the rational group-homology 
    $H_*(G;\IQ)$ is finite dimensional.

  \section{On the Proof of Theorem~\ref{thm:transfer-red-strict}}
    \label{sec:on-proof-thm-transfer-strict}

  Using the results from controlled topology discussed in 
  Section~\ref{sec:controlled-algebra} we will outline a
  proof of the surjectivity of
  \begin{equation*}
    \alpha_\calf \colon H_1^G(E_\calf G;\bfK_R) \to K_1(R[G])
  \end{equation*}
  under the assumptions of Theorem~\ref{thm:transfer-red-strict}.

  \subsection*{Step 1: preparations}
  Let $G$ be a finitely generated group and 
  $\calf$ be a family of subgroups of $G$.
  Let $N \in \IN$ be the number appearing in 
  Theorem~\ref{thm:transfer-red-strict}.
  Let $a \in K_1( R[G] )$.
  Then $a = [ \psi ]$ where $\psi \colon R[G]^n \to R[G]^n$
  is an $R[G]$-right linear automorphism.
  We write $R[G]^n = \IZ[G] \ox_\IZ R^n$.  
  There is a finite subset $T \subseteq G$ and there are
  $R$-linear maps $\psi_{g} \colon  R^{n} \to R^{n}$,
  ${\psi^{-1}}_{g} \colon R^n \to R^n$, $g \in T$  such that
  \begin{equation*}
    \psi (h \ox v)  =  \sum_{g \in T}  hg^{-1} \ox \psi_{g}(v)  
   \quad \text{and} \quad
    \psi^{-1} (h \ox v)  =  \sum_{g \in T} hg^{-1} \ox  {\psi^{-1}}_{g}(v). 
  \end{equation*}
  Because of Theorem~\ref{thm:alg-thin-h-cob-thm} it suffices
  to find
  \begin{itemize}
  \item a $G$-space $Y$,
  \item a $(G,\calf)$-complex $E$ of dimension at most $N$, 
  \item a $G$-map $Y \to E$,
  \item a geometric $R[G]$-module $M$ over $Y$,
  \item an $\e_N$-automorphism over $E$, $\varphi \colon M \to M$,
  \end{itemize}
  such that $[\varphi] = a \in K_1(R[G])$. 
  Here $\e_N$ is the number depending on $N$, whose existence is
  asserted in Theorem~\ref{thm:alg-thin-h-cob-thm}.

  Let $L$ be a (large) number.
  We will later specify $L$; it will only depend on $N$.
  From the assumption of Theorem~\ref{thm:transfer-red-strict}
  we easily deduce that there are
  \begin{enumerate}
    \item \label{space}
      an $N$-transfer space $X$ equipped with
      a $G$-action, 
    \item \label{complex}
      a simplicial $(G,\calf)$-complex $E$ of dimension at most $N$, 
    \item \label{map}
      a map $f \colon X \to E$ such that
      $d^1(f(g \cdot x), g \cdot f(x)) \leq \e_N / 2$ for all 
      $x \in X$ and all $g \in G$ that can be written as
      $g = g_1 \dots g_L$ with $g_1,\dots,g_L \in T$.  
  \end{enumerate}
  By compactness of $X$ there is $\delta_0 > 0$ such that
  $d^1(f(x),f(x')) \leq \e_N / 2$ for all
  $x, x' \in X$ with $d(x,x') \leq L \delta_0$.  
  We will use $Y := G \x X$ with the $G$-action defined by
  $g \cdot (h,x) := (gh,x)$.
  We will also use the $G$-map $G \x X \to E$, $(g,x) \mapsto g f(x)$.
  The action of $G$ on $X$ will be used later.

  \subsection*{Step 2: a chain complex over $X$.}
  To simplify the discussion let us assume that for $X$ 
  the maps $i$ and $p$ appearing in Definition~\ref{def:transfer-space}
  can be arranged to be $\delta$-homotopy equivalences.
  This means that in addition to $H$ there is also a 
  homotopy $H' \colon i \circ p \to \id_K$ such that for any $y \in K$ 
  the diameter of $\{ H'(t,y) \mid t \in [0,1] \}$ 
  with respect to the $l^1$-metric on $K$ is at most $\delta$. 

  Let $C_*$ be the simplicial chain complex of the $l$-fold
  simplicial subdivision of $K$.
  Using $p \colon K \to X$ we can view $C_*$ as a 
  chain complex of geometric $\IZ$-modules over $X$.
  If we choose $l$ sufficiently large, then we can arrange that
  the boundary maps $\dd^{C_*}$ of $C_*$ are 
  $\delta_0$-controlled over $X$.
  Moreover, using the action of $G$ on $X$ and a 
  $\delta$-homotopy equivalence between $K$ and $X$ 
  (for $0 < \delta \ll \delta_0$) 
  and enlarging $l$ we can produce
  chain maps $\varphi_g \colon C_* \to C_*$, $g \in G$, chain homotopies 
  $H_{g,h} \colon \varphi_{g} \circ \varphi_{h} \to \varphi_{gh}$
  satisfying the following control conditions 
  \begin{itemize}
  \item if $g \in T$ and $(x',x) \in \supp \varphi_g$
    then $d(x', g x) \leq \delta_0$
    (recall that we view $C_*$ as  a chain complex over $X$),
  \item if $g,h \in T$ and $(x',x) \in \supp H_{g,h}$
    then $d(x', gh x) \leq \delta_0$.
  \end{itemize}
  
  \begin{remark}
    If we drop the additional assumption on $X$ (i.e., if we
    no longer assume the existence of the homotopy $H'$), then
    it is only possible to construct the chain complex $C_*$
    in the idempotent completion of geometric $\IZ$-modules
    over $X$.
    This is a technical but -- I think -- not very important point.
  \end{remark}
  
  \begin{remark}
    A construction very similar to this step 2 is carried out in great 
    detail in~\cite[Sec.~8]{Bartels-Lueck(2012annals)}. 
  \end{remark}

  \subsection*{Step 3: transfer to a chain homotopy equivalence}
  Recall our automorphism $\psi$ of $R[G]^n = \IZ[G] \ox_\IZ R^n$.
  We will now replace the $R$-module $R^n$ by the $R$-chain complex
  $C_* \ox_\IZ R^n$ to obtain the chain complex 
  $D_* := \IZ[G] \ox_\IZ C_* \ox_\IZ R^n$.
  As $C_*$ is a chain complex of geometric $\IZ$-modules over $X$,
  $D_*$ is naturally a geometric $R[G]$-module over $G \x X$.
  Here $(D_*)_{h,x} = \{ h \ox w \ox v \mid v \in R^n, w \in (C_*)_x \}$
  for $h \in G$, $x \in X$.
  Recall that we use the left action defined by 
  $g \cdot (h,x) = (gh, x)$  on $G \x X$. 
  We can now use the data from Step~2 to transfer $\psi$ to a 
  chain homotopy equivalence 
  $\Psi = \sum_{g \in T} g \ox \varphi_g \ox \psi_g \colon D_* \to D_*$. 
  Similarly, there is a chain homotopy inverse $\Psi'$ for $\Psi$
  and there are chain homotopies 
  $\calh \colon \Psi \circ \Psi' \to \id_{D_*}$ and
  $\calh' \colon  \Psi'  \circ \Psi  \to \id_{D_*}$.
  In more explicit formulas these are defined by
  \begin{eqnarray*}
    \Psi ( h \ox w \ox v ) & =  &
       \sum_{g \in T} hg^{-1}  \ox \varphi_{g} (w) \ox \psi_{g}(v), \\ 
    \Psi' ( h \ox w \ox v ) & = & 
       \sum_{g \in T} hg^{-1}  \ox \varphi_{g} (w) \ox {\psi^{-1}}_{g}(v), \\
    \calh ( h \ox w \ox v ) & = &
       \sum_{g,g' \in T} h (gg')^{-1}  
               \ox H_{g,g'} (w) \ox \psi_{g} \circ {\psi^{-1}}_{g'}(v), \\
    \calh' ( h \ox w \ox v ) & = &
       \sum_{g,g' \in T} h (gg')^{-1}  
               \ox H_{g,g'} (w) \ox {\psi^{-1}}_{g} \circ \psi_{g'}(v), \\
  \end{eqnarray*}
  for $h \in G$, $w \in C_*$, $v \in R^n$.
   
  \subsection*{Digression on torsion}
  Let $S$ be a ring.
  If $\Phi$ is a self-homotopy equivalence of a bounded chain complex $D_*$
  of finitely generated free $S$-modules then its \emph{self-torsion}
  $\tau(\Phi) \in K_1(S)$ is the $K$-theory class of an 
  automorphism ${\tilde \tau} (\Phi)$ of $\bigoplus_{n \in \IZ} D_n$.
  There is an explicit formula for $\tilde \tau (\Phi)$ that involves
  the boundary map of $D_*$, $\Phi$, a chain homotopy inverse 
  $\Phi'$ for $\Phi$ and chain homotopies 
  $\Phi \circ  \Phi' \to \id_{D_*}$,
  $\Phi' \circ \Phi \to \id_{D_*}$.
  The ingredients for such a formula can be found for example 
  in~\cite[Sec.~12.1]{Bartels-Farrell-Jones-Reich(2004)}. 
  A key property is that given a commutative diagram
  \begin{equation*}
   \xymatrix{ D_* \ar[rr]^{\Phi_1} \ar[d]^{q} & & 
              D_* \ar[d]^{q} \\
              D'_* \ar[rr]^{\Phi_2} & &
              D'_* 
            }
  \end{equation*}
  where $\Phi_1$, $\Phi_2$ and $q$ are chain homotopy equivalences
  one has $\tau(\Phi_1) = \tau(\Phi_2) \in K_1(S)$. 
  
  \begin{remark}
    It is possible to formulate Theorem~\ref{thm:alg-thin-h-cob-thm}
    directly for self-chain homotopy equivalences of chain complexes
    of geometric modules of bounded dimension.
    Then the discussion of torsion can be avoided here.
    This is the point of view taken 
    in~\cite[Thm.~5.3]{Bartels-Lueck(2012annals)}. 
  \end{remark}

  \subsection*{Step 4:  $\tau( \Psi ) = a$.}
  Because $X$ is contractible, the augmentation map $C_* \to \IZ$ 
  induces a homotopy  equivalence 
  \begin{equation*}
    q \colon D_* = \IZ[G] \ox_\IZ C_* \ox_\IZ R^n \to 
         \IZ[G] \ox_\IZ \IZ \ox_\IZ R^n = \IZ[G] \ox_\IZ R^n.
  \end{equation*}
  Moreover, $q \circ \Psi = \psi \circ q$.
  It follows that 
  \begin{equation*}
    a = [ \psi ] = \tau ( \psi ) = \tau (\Psi)
      = [ \tilde \tau ( \Psi ) ]
  \end{equation*}
  
  \subsection*{Step 5: control of $\tilde \tau (  \Psi ) $.}
  In order to understand the support of $\tilde \tau (\Psi ) $
  we first need to understand the support of its building blocks.
  If $((h',x'),(h,x)) \in (G \x X)^2$ belongs to the support of
  $\dd^{D_*}$, then $h' = h$ and $d(x',x) \leq \delta_0$.
  If $((h',x'),(h,x))$ belongs to the support of  $\Psi$
  or of its homotopy inverse $\Psi'$, then there is $g \in T$ such that
  $h' = hg^{-1}$ and $d(x',gx) \leq \delta_0$.
  If $((h',x'),(h,x))$ belongs to the support of the 
  chain homotopy $\calh$ or $\calh'$ 
  then there are $g, g' \in T$ such that
  $h' = h (gg')^{-1}$ and $d(x',gg'x) \leq \delta_0$.
  From the explicit formula for $\tilde \tau ( \Psi )$
  one can then read off that there is a number $K$, depending only
  on the dimension of $D_*$ (which is in our case bounded by $N$),
  such that the support of $\tilde \tau ( \Psi )$
  satisfies the following condition:
  if $((h',x'),(h,x)) \in \supp \tilde \tau ( \Psi  )$
  then there are $g_1,\dots,g_K \in T$ such that
  \begin{equation*}
     h' = h(g_1 \dots g_K)^{-1}  \quad \quad \text{and} \quad 
     d(x',g_1 \dots g_K x) \leq K\delta_0.
  \end{equation*}
  Note that we specified $K$ in this step; 
  note also that $K$ does only depend on $N$.
  
  \begin{remark}
    The actual value of $K$ is of course not important.
    It is not very large; for example $K := 10 N$ works -- I think.
  \end{remark}

  \subsection*{Step 6: applying $f$.}
  Using the map $f \colon X \to E$ we define the $G$-map 
  $F \colon G \x X \to E$ by $F(h,x) := h f(x)$.
  Combining the estimates from the end of step~2 and 
  the analysis of $\supp (\tilde \tau ( \Psi ))$
  it is not hard to see that $\tilde \tau ( \Psi )$
  is an $\e_N$-automorphism over $E$ (with respect to $F$).
  
  This finishes the discussion of the surjectivity of 
  $ \alpha_\calf \colon H_1^G(E_\calf G;\bfK_R) \to K_1(R[G])$
  under the assumptions of Theorem~\ref{thm:transfer-red-strict}.
  Surjectivity of this map under the assumptions of
  Theorem~\ref{thm:transfer-red-homotopy} follows from a very similar
  argument; mostly step~2 is slightly more complicated.
  For Theorem~\ref{thm:Farrell-Hsiang-groups} the transfer
  can no longer be constructed using a chain complex associated to a 
  space; instead Swan's Theorem~\ref{thm:swan} is used to construct
  a transfer. 
  Otherwise the proof is again very similar.

  \subsection*{$L$-theory transfer}
  The proof of the $L$-theory version of 
  Theorems~\ref{thm:transfer-red-strict}  
  and~\ref{thm:transfer-red-homotopy}
  follows the same outline.
  Now elements in $L$-theory are given by quadratic forms.
  The analog of chain homotopy self-equivalences in $L$-theory
  are ultra-quadratic Poincar\'e complexes~\cite{Ranicki(1992)}.
  These are chain complex versions of quadratic forms.
  The main difference is that to construct a transfer it
  is no longer sufficient to have just the chain complex $C$,
  in addition we need a symmetric structure on this
  chain complex.
  Moreover, this symmetric structure needs to be controlled
  (just as the boundary map $\dd$ is controlled).
  While there may be no such symmetric structure on
  $C$, there is a symmetric structure on the product of $C$ with its dual
  $D := C \ox C^{-*}$.
  This symmetric structure is given (up to signs) by
  $\langle a \ox \alpha, b \ox \beta \rangle = \alpha(b)\beta(a)$
  and turns out to be suitably controlled.
  This is the only significant change from the proof in $K$-theory
  to the proof in $L$-theory.
  
  \subsection*{Transfer for higher $K$-theory}
  We end this section with a very informal discussion of 
  one aspect of the proof of Theorem~\ref{thm:transfer-red-strict} 
  for higher $K$-theory.
  Again, we focus on surjectivity.
  In this case we use Theorem~\ref{thm:thin-h-cob-higher}
  in place of Theorem~\ref{thm:alg-thin-h-cob-thm}.
  Thus we need to produce an element in $K_*(\calc)$.
  Recall that objects of $\calc$ are sequences $(M_\nu)_{\nu \in \IN}$ 
  of geometric $R[G]$-modules and that morphisms are sequences of 
  $R[G]$-linear maps that become more controlled with $\nu \to \infty$.
  The general idea is to apply the transfer from Step~3 to each $\nu$
  to produce a functor from $R[G]$-modules to $\calc$.
  The problem is, however, 
  that the construction from Step~3 is not functorial.
  The reason for this in turn is that the group $G$ only acts up to
  homotopy on the chain complex $C_*$.
  The remedy for this failure is to use the singular chain complex 
  of $C_*^\sing(X)$ in place of $C_*$.
  It is no longer finite, but it is homotopy finite, 
  which is finite enough.
  For the control consideration from Step~5 it was important, that
  the boundary map of $C_*$ is $\delta_0$-controlled.
  This is no longer true for $C^\sing_*(X)$.
  One might be tempted to use the subcomplex $C^{\sing,\delta_0}(X)$
  spanned by singular simplices in $X$ of diameter $\leq \delta_0$.
  However,  the action of $G$ on $X$ is not 
  isometric and therefore there is no $G$-action 
  on $C^{\sing,\delta_0}(X)$.
  Finally, the solution is to use $C_*^\sing(X)$ together with its
  filtration by the subcomplexes $(C_*^{\sing,\delta}(X))_{\delta > 0}$.
  Using this idea it is possible to construct a transfer functor
  from the category of $R[G]$-modules to a category
  $\widetilde{\ch}_{\hfd} \calc$.
  The latter is a formal enlargement of the Waldhausen category
  ${\ch}_{\hfd} \calc$ of homotopy finitely 
  dominated chain complexes over the category 
  $\calc$~\cite[Appendix]{Bartels-Reich(2005jams)}.
  Both the higher $K$-theory of $\ch_{\hfd} \calc$
  and of $\widetilde{\ch}_{\hfd} \calc$ coincide with
  the higher $K$-theory of $\calc$.
  Similar constructions are used 
  in~\cite{Bartels-Lueck-Reich(2008hyper), Wegner(2012CAT0)}.

  \section{Flow spaces}
    \label{sec:flow-spaces}
  
  \begin{convention}
    A \emph{$\CAT(0)$-group} is
    a group that admits a cocompact, proper 
    and isometric action on a finite dimensional $\CAT(0)$-space.    
  \end{convention}

  The goal of this section is to outline the proof 
  of  the fact~\cite{Bartels-Lueck(2012CAT(0)flow)} that 
  $\CAT(0)$-groups satisfy the assumptions of 
  Theorem~\ref{thm:transfer-red-homotopy}.
  Note that $\CAT(0)$-groups are finitely 
  presentable~\cite[Thm.~III.$\Gamma$.1.1(1), p.439]
                                 {Bridson-Haefliger(1999)}.

  \begin{proposition} \label{prop:CAT(0)-satisfy-B}
    Let $G$ be a $\CAT(0)$-group.
    Exhibit $G$ as a finitely presented group $\langle S \mid R \rangle$. 
    Then there is $N \in \IN$ such that for any $\e > 0$ there are
    \begin{enumerate}
    \item \label{thm:transfer-red-homotopy:space}
      an $N$-transfer space $X$ equipped with
      a homotopy $G$-action $(\varphi,H)$, 
    \item \label{thm:transfer-red-homotopy:complex}
      a simplicial $(G,\VCyc)$-complex $E$ of dimension at most $N${,} 
    \item \label{thm:transfer-red-strict:map}
      a map $f \colon X \to E$ that is $G$-equivariant up to $\e$:
      for all $x \in X$, $s \in S \cup S^{-1}$, $r \in R$
      \begin{itemize}
      \item 
        $d^1(f( \varphi_s (x) ), s \cdot f(x)) \leq \e$,
      \item 
        $\{ H_r(t,x) \mid t \in [0,1] \}$ has diameter at most $\e$.
      \end{itemize} 
    \end{enumerate}
  \end{proposition}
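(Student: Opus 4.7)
The plan is to use the cocompact isometric action of $G$ on a finite-dimensional $\CAT(0)$-space $Z$ together with the dynamics of the flow space $\FS(Z)$ of generalized geodesics in $Z$.

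For the transfer space, fix a base point $z_0 \in Z$ and set $X := B_\rho(z_0) \subseteq Z$ for a large radius $\rho$. Because $Z$ is proper, $X$ is compact; because $Z$ is $\CAT(0)$ and $X$ convex, $X$ is contractible via straight-line contraction to $z_0$; and $X$ is an $N$-transfer space with $N = \dim Z$, since approximating $X$ by a fine triangulation of a slightly larger ball and retracting via the $1$-Lipschitz $\CAT(0)$-projection $p_X \colon Z \to X$ supplies, for any $\delta > 0$, the data $i, p, H$ required by Definition~\ref{def:transfer-space}. Define the homotopy $G$-action on $X$ by $\varphi_s(x) := p_X(s \cdot x)$ for $s \in S \cup S^{-1}$. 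For a relator $r = s_1 \cdots s_l \in R$ the underlying product acts as the identity on $Z$, so the iterated composite $\varphi_{s_1} \circ \cdots \circ \varphi_{s_l}$ agrees with $\id_X$ on those $x \in X$ whose intermediate translates $s_{k+1} \cdots s_l x$ all remain inside $X$. The homotopy $H_r$ is built by $\CAT(0)$-linear interpolation in $Z$ followed by $p_X$; its tracks are then bounded by a constant depending only on the presentation and the displacements of the generators.

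For the simplicial $(G,\VCyc)$-complex $E$ and the almost-equivariant map $f$, I invoke the main technical result of~\cite{Bartels-Lueck(2012CAT(0)flow)}: for every $\alpha > 0$ there is a $G$-equivariant open cover $\calu$ of $\FS(Z)$ whose order is bounded by a constant depending only on $\dim Z$, whose isotropy groups belong to $\VCyc$, and with the long-thin property that every point of $\FS(Z)$ admits a neighborhood containing a flow-line segment of length $\alpha$ inside some $U \in \calu$. Take $E$ to be the nerve $|\calu|$ with the $l^1$ metric, and define $\iota \colon X \to \FS(Z)$ by sending $x$ to a generalized geodesic canonically associated with the pair $(z_0, x)$, then flowed forward for a large amount of time. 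Setting $f := \nu \circ \iota$, where $\nu \colon \FS(Z) \to |\calu|$ is induced by a $G$-equivariant partition of unity subordinate to $\calu$, the differences between $\iota(\varphi_s(x))$ and $s \cdot \iota(x)$, and between $\iota(H_r(t,x))$ and $\iota(x)$ for $t \in [0,1]$, are uniformly bounded in $\FS(Z)$ by quantities depending only on the presentation. The long-thin property of $\calu$ then converts this bounded displacement into $\e$-closeness in $E$, provided $\alpha$ is chosen large enough and the cover is fine enough transverse to the flow.

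The hard part is the construction of the long thin cover of $\FS(Z)$ with order bounded independently of $\alpha$, which is the main content of~\cite{Bartels-Lueck(2012CAT(0)flow)}. One performs an inductive dimension-reduction argument on cross-sections transverse to the flow, carefully separating the periodic and non-periodic parts of $\FS(Z)$. The virtually cyclic isotropy of elements of $\calu$ ultimately comes from the fact that the stabilizer of a periodic generalized geodesic under a proper cocompact $\CAT(0)$-action is virtually cyclic. Additional technical difficulties stem from the fact that geodesics in a general $\CAT(0)$-space need not be locally unique and the flow on $\FS(Z)$ is only continuous, not smooth, so all estimates have to be carried out coarsely and directly on $\FS(Z)$ rather than with Riemannian tools.
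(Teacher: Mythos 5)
Your proposal follows essentially the same route as the paper: use a large closed ball $B_R(x_0)$ in the $\CAT(0)$-space $Z$ as the transfer space with homotopy action $\varphi_s(x) := \rho_R(s\cdot x)$ via closest-point projection, map into the flow space $\FS(Z)$ by assigning to $x$ the generalized geodesic from $x_0$ to $x$ and then flowing for a large time $T$, and finally map the flow space to the nerve $E = |\calu|$ of a long thin cover with $\VCyc$ isotropy. The paper factors this through an explicit intermediate statement (Proposition~\ref{prop:(alpha-e)-B-for-CAT(0)}, the ``$(\alpha,\e)$-version'' in the flow space) before applying the cover, and it introduces a modified metric $d_\Lambda$ on $\FS(Z)$ that blows up the transversal directions so that the partition-of-unity map $F \colon \FS(Z) \to |\calu|$ contracts foliated displacements $(\alpha,\delta)$ to $\e$ in $E$; you summarize this contraction step as ``the long-thin property then converts bounded displacement into $\e$-closeness,'' which is correct but elides that the metric blow-up is the mechanism making the estimate work. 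Also, the general existence of long thin covers is from~\cite{Bartels-Lueck-Reich(2008cover)}; the contribution of~\cite{Bartels-Lueck(2012CAT(0)flow)} is extending them over the periodic part of $\FS(Z)$ at the cost of allowing virtually cyclic isotropy, and constructing the $\CAT(0)$ flow space itself. These are minor points of emphasis; the architecture of your argument matches the paper's.
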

  
  \subsection*{An $(\alpha,\e)$-version of the assumptions
         of Theorem~\ref{thm:transfer-red-homotopy} }

  Let $G$ be a group.

  \begin{definition} \label{def:N-flow-space}
    An $N$-flow space $\FS$ for $G$ is a metric space
    with a continuous flow $\phi \colon \FS \x \IR \to \FS$ and
    an isometric proper action of $G$ such that
    \begin{enumerate}
    \item the flow is $G$-equivariant: $\phi_t(gx) = g \phi_t(x)$
       for all $x \in X$, $t \in \IR$ and $g \in G$;
    \item $\FS \setminus 
            \{ x \mid \phi_t(x) = x \; \text{for all} \; t \in \IR \}$
       is locally connected and has covering dimension at most $N$. 
    \end{enumerate}
  \end{definition}

  \begin{notation}
    Let $\alpha, \e \geq 0$. For $x,y \in \FS$ we write
    \begin{equation*}
      d_\FS^\fol(x,y) \leq (\alpha,\e)
    \end{equation*}
    if there is $t \in [-\alpha,\alpha]$ such that 
    $d ( \phi_t (x), y) \leq \e$. 
  \end{notation}
  
  Of course, $\e$ will usually be a small number while $\alpha$
  will often be much larger.

  \begin{proposition} \label{prop:(alpha-e)-B-for-CAT(0)}
    Let $G$ be a $\CAT(0)$-group.
    Exhibit $G$ as a finitely presented group $\langle S \mid R \rangle$. 
    Then there exists $N \in \IN$ and a cocompact $N$-flow space
    for $G$ and $\alpha > 0$ such that for all $\e > 0$
    there are
    \begin{enumerate}
    \item \label{prop:(alpha-e)-B-==>-B:space}
      an $N$-transfer space $X$ equipped with
      a homotopy $G$-action $(\varphi,H)$, 
    \item \label{prop:(alpha-e)-B-==>-B:map}
      a map $f \colon X \to \FS$ that is $G$-equivariant up to 
      $(\alpha,\e)$:
      for all $x \in \FS$, $s \in S \cup S^{-1}$, $r \in R$, $t \in [0,1]$
      \begin{itemize}
      \item 
        $d_\FS^\fol(f( \varphi_s (x) ), s \cdot f(x)) \leq (\alpha,\e)$,
      \item  $d_\FS^\fol ( f(H_r(t,x)), f(x)) \leq (\alpha,\e)$.
      \end{itemize} 
    \end{enumerate}
  \end{proposition}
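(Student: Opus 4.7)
The plan is to take every geometric ingredient directly from the $\CAT(0)$-space itself. Let $Z$ be a finite-dimensional $\CAT(0)$-space on which $G$ acts cocompactly, properly, and isometrically, guaranteed by hypothesis. I would set $\FS := \FS(Z)$ to be the space of generalized geodesics $c \colon \IR \to Z$ (i.e., isometric embeddings of a closed, possibly degenerate, interval extended to $\IR$ by constants), equipped with the translation flow $\phi_t c(\cdot) := c(\cdot+t)$ and a metric like $d_\FS(c,c') = \int_\IR d_Z(c(s),c'(s)) e^{-|s|}\, ds$ that makes $\phi$ continuous. The $G$-action on $Z$ induces an isometric $G$-action on $\FS$ commuting with $\phi$; properness and cocompactness of the $G$-action on $Z$ promote to properness and cocompactness on $\FS$ by a standard argument. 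Granting the dimension estimate that the complement of the flow fixed set $\{c \mid \phi_t c = c\ \forall t\}$ has covering dimension bounded by some $N = N(\dim Z)$, $\FS$ is a cocompact $N$-flow space for $G$.

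Next I would construct the transfer space and the homotopy action. Fix a basepoint $z_0 \in Z$ and a radius $\beta > 0$ to be determined by $\e$, and set $X := \overline{B}(z_0,\beta) \subseteq Z$. Since the $G$-action is proper and cocompact, $Z$ is a proper metric space and $X$ is compact; convexity in a $\CAT(0)$-space yields a geodesic contraction to $z_0$, and fine simplicial approximations of dimension $\leq \dim Z$ realize $X$ as an $N$-transfer space (after possibly enlarging $N$). Let $\pi_X \colon Z \to X$ be the $\CAT(0)$ nearest-point projection, a $1$-Lipschitz retraction onto the closed convex set $X$, and define $\varphi_s(x) := \pi_X(sx)$ for $s \in S \cup S^{-1}$. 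For each relation $r = s_1 \cdots s_l \in R$ one has $r \cdot x = x$ in $Z$, so the composite $\varphi_{s_1} \circ \cdots \circ \varphi_{s_l}(x)$ and $x$ both lie in $X$; interpolating between them by straight-line homotopies in $Z$, post-composed with $\pi_X$, gives the required homotopies $H_r$, with the tracks of $H_r$ uniformly short in $Z$ for $x$ well inside the ball.

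The map $f \colon X \to \FS$ would send $x$ to the generalized geodesic $c_x$ that parametrizes the $\CAT(0)$-geodesic from $z_0$ to $x$ on $[0,d(z_0,x)]$ and is constant outside this interval. Set $\alpha := \max_{s \in S} d(z_0, sz_0) < \infty$. For $s \in S \cup S^{-1}$ and $x \in X$, the geodesics $s \cdot f(x) = s \cdot c_x$ (from $sz_0$ to $sx$) and $f(\varphi_s x) = c_{\pi_X(sx)}$ (from $z_0$ to $\pi_X(sx)$) have starting points at distance at most $\alpha$ and ending points at distance at most $d(sx,\pi_X(sx))$, which tends to $0$ as $\beta \to \infty$ for any fixed $x$ well inside $X$. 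The $\CAT(0)$ convexity of the distance function along pairs of geodesics, combined with a flow shift of size at most $\alpha$ to align the $z_0$-ends of the two parametrizations, then yields $d_\FS^\fol(f(\varphi_s x), s \cdot f(x)) \leq (\alpha, \e)$. The bound $d_\FS^\fol(f(H_r(t,x)), f(x)) \leq (\alpha, \e)$ follows analogously from the explicit construction of $H_r$ and the fact that short straight-line displacements in $Z$ pull back under $f$ to short trajectories in $\FS$ (up to a uniformly bounded flow shift).

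The main obstacle is the dimension estimate in the first paragraph: one must show that $\FS(Z) \setminus \FS(Z)^\phi$ has covering dimension bounded by some $N$ depending only on $\dim Z$. This is where the finite-dimensionality hypothesis on $Z$ is used essentially; once granted, the remaining verifications are largely soft consequences of $\CAT(0)$ convexity and of the construction of the flow space metric. A secondary (but far easier) point to verify carefully is that the projections $\pi_X$ really do make $\varphi$ a homotopy action whose tracks and the tracks of $H_r$ push forward via $f$ with the claimed $(\alpha,\e)$ control; both reduce to a compactness argument on the ball combined with the explicit Lipschitz bounds coming from $\CAT(0)$ comparison.
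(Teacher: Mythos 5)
Your construction matches the paper's almost exactly — same flow space $\FS(Z)$ with the weighted-integral metric, same transfer space $B_R(x_0)$ with the homotopy action $\varphi_s^R(x) = \rho_R(sx)$ built from the nearest-point projection — except for one ingredient, and that omission is fatal. You define $f$ to send $x$ to the generalized geodesic from $x_0$ to $x$ parametrized so that it \emph{starts at $x_0$ at time $0$}; in the paper's notation this is $\iota_R$, whereas the map actually used is $f^{T,R} = \phi_T \circ \iota_R$ for a large time-shift $T$ chosen together with $R$ depending on $\e$. The shift $T$ is the essential idea: the metric $d_\FS$ weights $d_Z(c(t),c'(t))$ by $e^{-|t|}$, so a generalized geodesic is effectively ``marked'' by $c(0)$, and one must arrange the two geodesics to be within $\e$ of each other (after a flow-shift of at most $\alpha$) \emph{near their marking points}.

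With your $T=0$ version this fails. Take $c := f(\varphi_s x)$ and $c' := s\cdot f(x)$. Then $c(0)=x_0$ and $c'(0)=s x_0$, and for $t\leq 0$ both are constant at these points. A flow-shift $\tau\in[-\alpha,\alpha]$ gives $\phi_\tau(c)(0)=c(\tau)$, which is either $x_0$ itself (if $\tau\leq 0$) or a point at distance $\tau$ from $x_0$ \emph{along the geodesic from $x_0$ toward $\rho_R(sx)$}. That geodesic has no reason to pass anywhere near $s x_0$: already in $\IR^2$ with $x_0=(0,0)$, $sx_0=(\alpha,0)$, $\rho_R(sx)=(0,R)$ the geodesic is the vertical segment and stays at distance $\alpha$ from $sx_0$. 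So for every admissible $\tau$ one has $d_Z(\phi_\tau(c)(t),c'(t))$ of order $\alpha$ for $t$ near $0$, hence $d_\FS(\phi_\tau(c),c')$ is of order $\alpha$, not $\e$. Convexity of the $\CAT(0)$-distance function does give you that the two geodesics converge toward their (nearly coincident) endpoints — but with $T=0$ you are reading the $d_\FS$-integral near the \emph{starting} points, where they are $\alpha$-far apart. The paper's $\phi_T$ moves the marking point a long way down the geodesic, precisely into the regime where convexity makes the two geodesics $\e$-close; the conditions that $T$, $R-T$ and $R/(R-T)$ all be large are what make both the endpoint-discrepancy from the projection and the starting-point discrepancy $d(x_0,sx_0)$ negligible near time $0$. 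A secondary issue: you only argue the estimate for $x$ ``well inside the ball,'' but the conclusion must hold uniformly for all $x\in X$, including those near $\dd B_R(x_0)$ where $\rho_R$ is nontrivial; this is exactly Case~II in the paper's proof of Lemma~\ref{lem:f^T,R...-does-it}, and it again leans on the large time-shift.
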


  The proof of Proposition~\ref{prop:(alpha-e)-B-for-CAT(0)}
  will be discussed in a later subsection.
  The key ingredient that allows to deduce  
  Proposition~\ref{prop:CAT(0)-satisfy-B}
  from  Proposition~\ref{prop:(alpha-e)-B-for-CAT(0)} 
  are the long and thin covers
  for flow spaces from~\cite{Bartels-Lueck-Reich(2008cover)},
  that in turn generalize the long and thin cell structures 
  of Farrell-Jones~\cite[Sec.~7]{Farrell-Jones(1986a)}.
  
  \begin{definition}
    Let $R > 0$. 
    A collection $\calu$ of open subsets of $\FS$ is said to be 
    an \emph{$R$-long cover} of $A \subseteq \FS$ if for all
    $x \in A$ there is $U \in \calu$ such that
    \begin{equation*}
      \phi_{[-R,R]}(x) := \{ \phi_t(x) \mid t \in [-R,R] \} \subseteq U.
    \end{equation*}
  \end{definition}

  \begin{notation}(Periodic orbits)
    Let $\gamma > 0$.
    Write $\FS_{\leq \gamma}$ for the subset of $\FS$ consisting
    of all points $x$ for which there are $0 < \tau \leq \gamma$ 
    and $g \in G$ such that $\phi_\tau(x) = gx$. 
  \end{notation}

  \begin{theorem}[Existence of long thin covers]
    \label{thm:ex-long-thin-cover}
    Let $\FS$ be a cocompact $N$-flow space for $G$.
    Then there is $\tilde N$ such that for all
    $R > 0$ there exists $\gamma > 0$ and a collection $\calu$
    of open subsets of $\FS$ such that 
    \begin{enumerate}
    \item $\dim \calu \leq \tilde N$: any point of $\FS$
      is contained in at most $\tilde N + 1$ members of $\calu$,
    \item $\calu$ is an $R$-long cover of 
      $\FS \setminus \FS_{\leq \gamma}$,
    \item $\calu$ is $G$-invariant: for $g \in G$, $U \in \calu$
      we have $g(U) \in \calu$,
    \item \label{thm:ex-long-thin-cover:fin} $\calu$ has finite isotropy:
      for all $U \in \calu$ the group 
      $G_U := \{ g \in G \mid g(U) = U \}$ is finite.
    \end{enumerate}
  \end{theorem}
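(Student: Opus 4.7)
The plan is to adapt the classical Farrell--Jones long-thin-cell construction (as in~\cite{Farrell-Jones(1986a)}) to this purely metric setting by building a cover out of flow boxes, using the assumption $x \notin \FS_{\leq \gamma}$ to guarantee injectivity of the flow, and then controlling the dimension by a transversal-slicing argument.

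\textbf{Step 1: local flow boxes away from short periodic orbits.}
Fix $R > 0$. The first task is to pick $\gamma = \gamma(R) > 0$ small enough that for every $x \in \FS \setminus \FS_{\leq \gamma}$ the map $\IR \to \FS$, $t \mapsto \phi_t(x)$ is injective on $[-2R,2R]$ and, moreover, the $G$-orbit does not return: $g \phi_t(x) = \phi_s(x)$ with $|t|,|s| \leq 2R$ forces $g \in G_x$ and $t = s$. Such a $\gamma$ exists by the definition of $\FS_{\leq \gamma}$ together with properness of the $G$-action (any failure would produce a sequence of returns contradicting $x \notin \FS_{\leq \gamma}$ in the limit, using local compactness of the non-fixed part of $\FS$). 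Then for each such $x$ I would pick a small transversal slice $T_x$ through $x$ (codimension~$1$, so of dimension at most $N-1$), and form the flow box
\begin{equation*}
 B_x \;:=\; \phi_{(-R-1,\,R+1)}(T_x),
\end{equation*}
which by the injectivity just arranged is an open embedded image of $T_x \times (-R-1,R+1)$. By construction $B_x$ is $R$-long for every point of the smaller slab $\phi_{(-1,1)}(T_x)$.

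\textbf{Step 2: dimension control by a transversal cover.}
This is where the real work lies. I would cover $\FS \setminus \FS_{\leq \gamma}$ by a $G$-invariant family of flow boxes of bounded multiplicity. Cocompactness of the $G$-action means it suffices to produce such a family on a compact fundamental region, which by a standard argument amounts to finding, locally, a cover of the \emph{transversal} direction of multiplicity bounded in terms of $N$. Here I would invoke Ostrand's theorem applied to the $G$-invariant open subset $\FS \setminus \FS_{\leq \gamma}$, which has covering dimension at most $N$ and (crucially) is locally connected by hypothesis. Roughly: refine the collection $\{T_x\}_{x}$ to finitely many shrunken transversal slices on a fundamental domain, taken with multiplicity $\leq N+1$; then the corresponding flow boxes $B_x$, and all their $G$-translates, form a cover $\calu_0$ of $\FS \setminus \FS_{\leq \gamma}$ whose overlap multiplicity is governed by $N$ together with the number of $G$-translates of a single $B_x$ meeting a fixed point. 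The latter is finite by properness, and can be made uniform by shrinking slices; this yields $\calu_0$ of dimension $\leq \tilde N$ for some $\tilde N$ depending only on $N$.

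\textbf{Step 3: equivariance and finite isotropy.}
By construction $\calu_0$ is $G$-invariant. For $U = g \cdot B_x \in \calu_0$, any $h \in G_U$ preserves the flow box, hence permutes the flow lines inside it; using injectivity of the flow on $[-2R,2R]$ and properness, $G_U$ embeds in the product of the (finite) stabilizer of the slice and a finite set of admissible time translations, hence is finite. This gives the finite isotropy condition. The $R$-long condition is immediate from the construction of the $B_x$.

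\textbf{Main obstacle.} The delicate point is Step~2: producing the uniform dimension bound $\tilde N$ independent of $R$. Two things can go wrong. First, as $R$ grows the flow boxes get longer and are more likely to overlap themselves under $G$-translation, so one has to choose slices thin enough at each scale but without losing the cocompact handle on the cover. Second, near $\FS_{\leq \gamma}$ the flow is almost recurrent; this is precisely why the statement only covers the complement of $\FS_{\leq \gamma}$, and the choice of $\gamma = \gamma(R)$ must be fine-tuned so that after discarding $\FS_{\leq \gamma}$ the flow really is injective on intervals of length $2R$ across $G$-orbits. These are the points where I would expect the careful estimates of~\cite{Bartels-Lueck-Reich(2008cover)} to become indispensable, and where I do not see a shortcut avoiding the box-counting arguments inherited from the long-thin-cell machinery of Farrell--Jones.
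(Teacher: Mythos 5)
The proposal captures the right geometric picture (flow boxes built out of transversal slices), and you are right that this is the spirit of~\cite{Bartels-Lueck-Reich(2008cover)}; but as written there are genuine gaps, the most important of which you have yourself flagged but not closed. The central problem is Step~2. Shrinking slices does \emph{not} give a multiplicity bound independent of~$R$. Fix a fundamental domain and a finite collection of slices $T_{x_1},\dots,T_{x_k}$ there; a flow box $B_{x_i} = \phi_{(-R-1,R+1)}(T_{x_i})$ has length $\sim 2R$, and for a point $y$ the question of how many translates $gB_{x_i}$ contain $y$ reduces (using $G$-equivariance of the flow) to asking through how many translates $gT_{x_i}$ the flow segment $\{\phi_t(y)\mid |t|<R+1\}$ passes. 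That segment typically crosses $\sim R$ fundamental domains, so the naive overlap is $\sim R$, not $\sim N$. Making the slices thinner cannot help, because the segment must pass through \emph{some} slice in each translate of a fundamental domain it meets. This is exactly why the actual proof is an elaborate inductive construction producing a laminar collection of boxes (each flow segment entering and leaving at most a bounded number of boxes), and not a refinement by transversals. Invoking Ostrand's theorem on $\FS \setminus \FS_{\leq\gamma}$ gives an $(N{+}1)$-dimensional cover of that space by small sets, but nothing forces those small sets to assemble into long flow boxes; the tension between ``long'' and ``thin'' is precisely the content of the theorem and is not resolved by general dimension theory alone.

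Two smaller but real issues. First, the choice of $\gamma$ in Step~1 goes in the wrong direction: to guarantee that $t \mapsto \phi_t(x)$ is injective on $[-2R,2R]$ and has no $G$-returns there for all $x \notin \FS_{\leq\gamma}$, you need $\gamma$ \emph{large} (at least $4R$), not small --- $\FS_{\leq\gamma}$ only excludes periods up to time $\gamma$, so a small $\gamma$ leaves plenty of returns at intermediate times. (The theorem permits this: it merely asserts existence of \emph{some} $\gamma$.) Second, $\FS$ is only a metric space with a flow satisfying a covering-dimension and local-connectedness hypothesis on the non-fixed part; it is not a manifold, and there is no free notion of a ``codimension-one transversal slice $T_x$.'' One can manufacture substitutes (e.g.\ as level sets of carefully chosen continuous functions), but this is part of the work of~\cite{Bartels-Lueck-Reich(2008cover)}, not something to be assumed. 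To your credit, you explicitly acknowledge that you ``do not see a shortcut avoiding the box-counting arguments''; the point of this review is to confirm that suspicion and to pin down precisely where the naive flow-box picture fails.
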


  \begin{example} \label{ex:Z-on-R}
    Let $G := \IZ$.
    Consider $\FS := \IR$ with the usual $\IZ$-action and the flow
    defined by $\phi_t(x) := x+t$.
    If $\calu_R$ is an $R$-long $\IZ$-invariant cover of $\IR$ of
    finite isotropy then
    the dimension of $\calu_R$  grows linearly with $R$.
     
    Theorem~\ref{thm:ex-long-thin-cover} states that this is 
    the only obstruction to the existence of uniformly finite dimensional 
    arbitrary long $G$-invariant covers of $\FS$ of finite isotropy. 
  \end{example}

  \begin{remark} \label{rem:proof-of-ex-long-thin}
    Theorem~\ref{thm:ex-long-thin-cover} is more or 
    less~\cite[Thm.~1.4]{Bartels-Lueck-Reich(2008cover)},
    see also~\cite[Thm.~5.6]{Bartels-Lueck(2012CAT(0)flow)}.  
    The proof depends only on fairly elementary constructions,
    but is nevertheless very long.    
    (It would be nice to simplify this proof -- but I do not
    know where to begin.)

    In these references in addition an upper bound for the order
    of finite subgroups of $G$ is assumed.
    This assumption is removed in recent (and as of yet unpublished) 
    work of Adam Mole and Henrik R\"uping.
  \end{remark}

  \begin{remark} \label{rem:extend-to-all-of-FS}
    For the flow spaces, that have been relevant for the Farrell-Jones
    conjecture so far, it is possible to extend the
    cover $\calu$ from $\FS \setminus \FS_{\leq \gamma}$ to
    all of $\FS$.
    The only price one has to pay for this extension is that in 
    assertion~\ref{thm:ex-long-thin-cover:fin} one has to allow
    virtually cyclic groups instead of only finite groups. 
    Note that with this change Example~\ref{ex:Z-on-R} is no longer
    problematic; we can simply set $\calu_R := \{ \IR \}$. 

    It is really at this point where the family of virtually cyclic
    subgroups plays a special role and appears in proofs of
    the Farrell-Jones Conjecture. 
  \end{remark}

  \begin{remark}
     In the case of $\CAT(0)$ groups the extension of the cover from
     $\FS \setminus \FS_\gamma$ to $\FS$ is really the most
     technical part of the arguments 
     in~\cite{Bartels-Lueck(2012CAT(0)flow)}.
     
     It would be more satisfying to have a result that provides this
     extension (after allowing virtually cyclic groups) 
     for general cocompact flow spaces.
  \end{remark}
  
  \begin{remark}
     One may think of Theorem~\ref{thm:ex-long-thin-cover} as
     a (as of now quite difficult!) parametrized version of the 
     very easy fact  that $\IZ$ has finite  asymptotic dimension.
  \end{remark}

  \subsection*{Sketch of proof for 
          Proposition~\ref{prop:CAT(0)-satisfy-B}
          using Proposition~\ref{prop:(alpha-e)-B-for-CAT(0)}}
  
  The idea is  easy.
  We produce a map $F \colon \FS \to E$ that
  is suitably contracting along the flow lines of $\phi$.
  Then we can compose $f \colon X \to \FS$ from
  Proposition~\ref{prop:(alpha-e)-B-for-CAT(0)}
  with $F$ to produce the required map $F \circ f \colon X \to E$.  
 
  Let $G$ be a $\CAT(0)$-group. 
  Let $\e > 0$ be given.
  Let $\FS$ be the cocompact $N$-flow space for $G$ from 
  Proposition~\ref{prop:(alpha-e)-B-for-CAT(0)}. 
  As discussed in Remark~\ref{rem:extend-to-all-of-FS} there
  is $\tilde N$ such that for all 
  $R > 0$ there exists a collection $\calu$
  of open subsets of $\FS$ such that 
  \begin{enumerate}
    \item $\dim \calu \leq \tilde N$,
    \item $\calu$ is an $R$-long cover of  $\FS$,
    \item $\calu$ is $G$-invariant,
    \item $\calu$ has virtually cyclic isotropy:
      for all $U \in \calu$ the group 
      $G_U := \{ g \in G \mid g(U) = U \}$ is virtually cyclic.
  \end{enumerate}
  Let now $E := |\calu|$ be the nerve of the cover $\calu$. 
  The vertex set of this simplicial complex  is $\calu$ and we have 
  $|\calu| = \{ \sum_{U \in \calu} t_U U \mid t_U \in [0,1], \, 
           \sum_{U \in \calu} t_U = 1\;
                  \text{and} \; 
           \bigcap_{t_U \neq 0} U \neq \emptyset \}$.  
  Note that $|\calu|$ is a simplicial $(G,\VCyc)$-complex.
  To construct the desired map  $F \colon \FS \to E$ we first 
  change the metric on $\FS$.
  For (large) $\Lambda > 0$ we can define a metric that blows up
  the metric transversal to the flow $\phi$, and corresponds 
  to time along flow lines. 
  More precisely, 
  \begin{equation*}
   \begin{split}
    d_\Lambda (x,y) := \inf \Big\{ \sum_{i=1}^n \alpha_i + \Lambda\e_i \mid 
           \exists & x=x_0,x_1,\dots,x_n \; \text{such that} \\ 
           & d_\FS^\fol (x_{i-1},x_i) 
            \leq (\alpha_i,\e_i) \; \text{for} \; i=1,\dots,n \Big\}
   \end{split}
  \end{equation*}
  For $U \in \calu$, $x \in \FS$ let
  $a_{U} (x) := d_\Lambda ( x , \FS \setminus U )$
  and define $F \colon \FS \to |\calu|$ by
  \begin{equation*}
    F (x) := \sum_{U \in \calu} \frac{a_U(x)}{\sum_{V \in \calu} a_V(x)} U.
  \end{equation*}
  As $\calu$ is $G$-invariant, $F$ is $G$-equivariant.
  If $R > 0$ is sufficiently large (depending only on $\e$),  
  then there are $\Lambda > 0$ and $\delta > 0$ 
  (depending on everything at this point) such that 
  \begin{equation*}
    d_\FS^\fol ( x, x' ) \leq (\alpha, \delta) \; \implies
          d^1( F(x), F(x')) \leq \e. 
  \end{equation*}
  (More details for similar calculations can be found 
     in~\cite[Sec.~4.3, Prop.~5.3]{Bartels-Lueck-Reich(2008hyper)}.) 
  Thus we can compose with $F$ and conclude that 
  Proposition~\ref{prop:(alpha-e)-B-for-CAT(0)} 
  implies Proposition~\ref{prop:CAT(0)-satisfy-B}.
 
  \subsection*{The flow space for a $\CAT(0)$-space}
  
  This subsection contains an introduction to the flow space
  for $\CAT(0)$-groups from~\cite{Bartels-Lueck(2012CAT(0)flow)}.
  Let $Z$ be a $\CAT(0)$-space.
  
  \begin{definition}
    A \emph{generalized geodesic} in $Z$ is a continuous map 
    $c \colon \IR \to Z$ for which there exists an interval
    $(c_-, c_+)$ such that $c|_{(c_-,c_+)}$ is a geodesic and 
    $c|_{(-\infty,c_-)}$ and $c|_{(c_+,+\infty)}$ are constant. 
    (Here $c_- = -\infty$ and/or $c_+ = +\infty$ are allowed.)
  \end{definition}

  \begin{definition}
    The \emph{flow space for $Z$} is the space $\FS(Z)$ of all
    generalized geodesics $c \colon \IR \to Z$.
    It is equipped with the metric 
    \begin{equation*}
      d_\FS ( c, c' ) := \int_{\IR} \frac{d(c(t),c'(t))}{2 e^{|t|}}dt
    \end{equation*}
    and the flow 
    \begin{equation*}
      \phi_\tau( c) (t) := c( t + \tau ).
    \end{equation*}
  \end{definition}

  \begin{remark}
    The fixed point space for the flow  
    $\FS(Z)^\IR := \{ c \mid \phi_t(c) = c \; \text{for all} \; t \}$
    is via $c \mapsto c(0)$ canonically isometric to $Z$.

    The flow space $\FS(Z)$ is somewhat singular around $Z = \FS(Z)^\IR$.
    For example there are well defined maps $c \mapsto c(\pm\infty)$
    from $\FS(Z)$ to the 
    bordification~\cite[Ch.II.8]{Bridson-Haefliger(1999)} 
    $\bar Z$ of $Z$, but these maps fail to be continuous at $Z$.
  \end{remark}
  
  \begin{remark}
    The metric $d_\FS(c,c')$ cares most about $d(c(t),c'(t))$ for
    $t$ close to $0$.
    For example if $c(0) = c'(0)$  then $d_\FS(c,c')$ is bounded 
    by $\int_{0}^{\infty} \frac{t}{e^t} dt$.
    For this reason one can think of $c(0)$ as \emph{marking} 
    the generalized  geodesic $c$.
    If $c(0)$ is different from both $c(c_-)$ and $c(c_+)$
    (equivalently if $c_- < 0 < c_+$) then the triple
    $(c(c_-),c(0),c(c_+))$ uniquely determines $c$. 
  \end{remark}
  
  \begin{remark}
    An isometric action of $G$ on $Z$ induces an isometric action
    on $\FS(Z)$ via $(g \cdot c) (t) := g \cdot c(t)$.
    If the action of $G$ on $Z$ is in addition  cocompact, proper
    and $Z$ has  dimension at most $N$, then $\FS(Z)$ is a cocompact 
    $3N+2$-flow space for $G$ in the sense of 
    Definition~\ref{def:N-flow-space}, 
    see~\cite[Sec.~2]{Bartels-Lueck(2012CAT(0)flow)}.
 
    For cocompactness it is important that we allowed $c_- = -\infty$
    and $c_+ = +\infty$ in the definition of generalized geodesics. 
  \end{remark}

  \begin{remark}
    For hyperbolic groups there is a similar flow space constructed 
    by Mineyev~\cite{Mineyev(2005)}.
    This space is an essential ingredient for the
    proof that hyperbolic groups satisfy the assumptions of
    Theorem~\ref{thm:transfer-red-strict}.
    Mineyev's construction motivated the flow space for $\CAT(0)$ groups.

    However, for hyperbolic groups the construction is really much more 
    difficult. 
    A priori, there is really no local geometry associated to a 
    hyperbolic group, hyperbolicity is  just a condition on the large
    scale geometry and Mineyev extracts local information from this
    in the construction of his flow space.
    In contrast, for a $\CAT(0)$-group the corresponding $\CAT(0)$-space 
    provides local and global geometry right from the definition.
  \end{remark}

  \subsection*{Sketch of proof for 
         Proposition~\ref{prop:(alpha-e)-B-for-CAT(0)}}

  Let $Z$ be a finite dimensional $\CAT(0)$-space with an
  isometric, cocompact, proper action of the group $G$.
  Let $G = \langle S \mid R' \rangle$ be a finite presentation of $G$.
  Pick a base point $x_0 \in Z$.
  For $R > 0$ let $B_R(x_0)$ be the closed ball  in $Z$ of radius $R$ 
  around $x_0$.
  This will be our transfer space.
  Let $\rho_R \colon Z \to B_R(x_0)$ be the closest point projection.
  For $x, x' \in Z$, $t \in [0,1]$ we write 
  $t \mapsto (1-t) \cdot x + t \cdot x'$ for the straight line from
  $x$ to $x'$ parametrized by constant speed $d(x,x')$.
  For $g, h \in G$, $t \in [0,1]$, $x \in B_R(x_0)$ let
  \begin{eqnarray*}
    \varphi_g^R (x) & := & \rho_R ( g \cdot x), \\
    H^R_{g,h} (t,x) & := & \rho_R ( (1-t) \cdot g \varphi^R_h (x) + 
                                    t \cdot gh x ). 
  \end{eqnarray*}
  Then $H^R_{g,h}$ is a homotopy 
  $\varphi^R_g \circ \varphi^R_h \to \varphi^R_{gh}$.
  This data also specifies a homotopy action $(\varphi^R,H^R)$ 
  on $B_R(x_0)$. 
  We will use the map $\iota_R \colon B_R(x_0) \to \FS(Z)$ where
  $\iota_R(x)$ is the unique generalized geodesic $c$ in $Z$ with
  $c_- = 0$, $c_+ = d(x,x_0)$, $c(c_-) = c(0) = x_0$ and $c(c_+) = x$,
  i.e., the generalized geodesic from $x_0$ to $x$ that starts at time $0$
  at $x_0$.
  For $T \geq 0$ let
  $f^{T,R} := \phi_T \circ \iota_R \colon B_R(x_0) \to \FS(x_0)$.
  Proposition~\ref{prop:(alpha-e)-B-for-CAT(0)} follows from
  the next Lemma; this will conclude the sketch of proof for
  Proposition~\ref{prop:(alpha-e)-B-for-CAT(0)}.

  \begin{lemma} \label{lem:f^T,R...-does-it}
    Let $\alpha := \max_{s \in S} d(x_0, s x_0)$.
    For any $\e > 0$ there are $T, R > 0$ such that
    for all $x \in \FS$, $s \in S \cup S^{-1}$, $r \in R'$, $t \in [0,1]$
    we have
    \begin{itemize}
      \item  $d_\FS^\fol( f^{T,R}( \varphi^R_s (x) ), s \cdot f^{T,R}(x)) 
                 \leq (\alpha,\e)$,
      \item  $d_\FS^\fol ( f^{T,R}(H^{R}_r(t,x)), f^{T,R}(x))
                  \leq (\alpha,\e)$. 
    \end{itemize}
  \end{lemma}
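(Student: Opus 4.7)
The plan is to prove both bounds by unwinding the definitions in terms of the $\CAT(0)$-geometry of $Z$, and then combining $\CAT(0)$-convexity of distance with the exponential-in-$|t|$ weight in the integral defining $d_\FS$. I will choose $T$ and $R$ with $R \gg T \gg 0$ depending on $\e$ and $\alpha$.

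For the first inequality, unwind: $f^{T,R}(\varphi^R_s(x))$ is $\phi_T$ applied to the generalized geodesic $\gamma_1$ from $x_0$ to $\rho_R(sx)$, while $s \cdot f^{T,R}(x)$ is $\phi_T$ applied to the generalized geodesic $\gamma_2 = s \cdot \iota_R(x)$ from $sx_0$ to $sx$. The natural flow-shift candidate is $t_0 := d(x_0, \rho_R(sx)) - d(sx_0, sx)$, which equalizes the integration-parameter at which the two shifted generalized geodesics reach their terminal constant segments; the triangle inequality and the $1$-Lipschitz property of $\rho_R$ force $|t_0| \leq d(x_0, sx_0) \leq \alpha$. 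I then partition $\IR$ into the post-terminal region (both curves constant at points within $O(\alpha)$ of each other when $R$ is large), the pre-initial region (both constant at $x_0$ and $sx_0$, distance $\alpha$), and the intermediate region where both lie in the interior of their geodesics. On the intermediate region the reverse-parameterizations $\bar\gamma_1, \bar\gamma_2$ start at $\rho_R(sx)$ and $sx$, and $\CAT(0)$-convexity of the distance between two geodesics emanating from (nearly) the same point bounds $d(\bar\gamma_1(s), \bar\gamma_2(s))$ by $(s/\min(L_1,L_2)) \cdot O(\alpha)$ where $L_i$ are the lengths of $\gamma_i$. The intermediate and pre-initial regions lie at integration-parameter $t$ of magnitude at least $\approx T$, so the $e^{-|t|}$ weight shrinks their contribution to $d_\FS$ by a factor of order $e^{-T}$. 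Picking $T$ large in terms of $\e, \alpha$, then $R$ large enough to keep the post-terminal error small, yields $d_\FS(\phi_{t_0}(f^{T,R}(\varphi^R_s(x))), s \cdot f^{T,R}(x)) \leq \e$, which is the first claim.

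For the second inequality the same template works: the path $t \mapsto H^R_r(t,x)$ is built from $\CAT(0)$-straight-line interpolations composed with $\rho_R$, and because $r = 1$ in $G$, the total displacement would be zero were it not for $\rho_R$; in particular, the endpoints stay within $O(\alpha)$ of $x$ for $x$ in the interior of $B_R(x_0)$. Feeding this into $\phi_T \circ \iota_R$ produces generalized geodesics whose terminal segments are all within $O(\alpha)$ of each other, and a shift of size $\leq \alpha$ together with $\CAT(0)$-convexity and the exponential tail estimate again delivers the bound.

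The main obstacle is the interaction between $\rho_R$, which distorts geodesics in a genuine way near $\partial B_R(x_0)$ and cannot be captured purely by $\CAT(0)$-comparison in $Z$, and the finite effective window in $t$ demanded by $d_\FS$. The resolution is to take $R$ much larger than $T$, so that the collar of $B_R(x_0)$ on which $\rho_R$ is non-trivial is far from the marker-time $0$ in $\FS$ (which sits at parameter $\approx T$ along the geodesic), and the exponential tail $e^{-T}$ from the integrand then absorbs the $O(\alpha)$-size errors contributed by that collar and by the pre-initial segments. Because $S \cup S^{-1}$ and $R'$ are finite, a single choice of $T$ and $R$ works uniformly, completing the argument.
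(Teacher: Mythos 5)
Your overall strategy -- pick a flow-shift of size at most $\alpha$, decompose the time-line into terminal/intermediate/initial regions, use $\CAT(0)$-convexity on the geodesic part and the exponential weight in $d_\FS$ for the rest -- matches the paper's sketch in spirit, and your shift $t_0 = d(x_0,\rho_R(sx)) - d(sx_0,sx)$ does satisfy $|t_0| \leq \alpha$ once one uses $x \in B_R(x_0)$. However, your choice of parameter regime $R \gg T$ is wrong, and this is not a cosmetic issue: the argument genuinely fails there, and the paper's condition is the opposite one, namely $T$ close to $R$ (the sketch in the paper asks that $T$, $R-T$, and $R/(R-T)$ all be large).

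The problem is your claim that ``the intermediate and pre-initial regions lie at integration-parameter $t$ of magnitude at least $\approx T$.'' This is true only when $d(x_0, sx) \lesssim T$. When $x$ is near $\partial B_R(x_0)$ (so $d(x_0,sx) \approx R$, $d(sx_0,sx)\approx R$), the geodesic segments of $c = f^{T,R}(\varphi^R_s(x))$ and $c' = s \cdot f^{T,R}(x)$ occupy $u \in [-T, \approx R-T]$, which very much includes $u = 0$ when $R \gg T$. At $u = 0$ the marker points $c(0), c'(0)$ sit at arc-length $T$ from the two starting points $x_0, sx_0$. Your own convexity estimate (measured from the $sx$-end as in your $\bar\gamma_i$ formulation) gives a pointwise distance of order $\alpha \cdot (L_i - T)/L_i \approx \alpha (R-T)/R$ there, and no flow-shift of size $\leq \alpha$ can do better (in a Euclidean model the minimum distance from $c'(0)$ to the whole image of $c$ is already $\approx \alpha(1-T/R)$). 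With $R \gg T$ that quantity is $\approx \alpha$, it sits right at $u=0$ where the weight $e^{-|u|}$ is of order $1$, and so $d_\FS(\phi_{t_0}(c),c')$ stays of order $\alpha$ rather than dropping below $\e$. The exponential tail cannot absorb an $O(\alpha)$ error that occurs at $u \approx 0$.

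The fix, and the whole point of the paper's two cases, is to take $T$ close to $R$ in the sense $R/(R-T) \to \infty$ (while still $T \to \infty$ and $R-T \to \infty$). Then for $x$ near the boundary (the paper's Case II) the marker sits near the $sx$-end of both geodesics, where $\CAT(0)$-convexity from the nearly common endpoint makes the pointwise distance of order $\alpha(R-T)/R \to 0$ on the window around $u=0$, while the $O(\alpha)$ discrepancy at the terminal (constant) part and at the initial part is pushed out to $|u| \gtrsim R-T$ and $|u| \gtrsim T$ respectively, where the exponential weight kills it. For $x$ near the center (Case I, $d(x_0,sx)\lesssim$ some cutoff), both generalized geodesics are already constant at $sx$ for $u \geq 0$ once $T$ exceeds that cutoff, and only the $e^{-T}$ tail matters. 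You need both cases; the single unified estimate you wrote handles only the first.

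A small side remark: your justification ``$|t_0| \leq d(x_0,sx_0) \leq \alpha$ from the triangle inequality and $1$-Lipschitzness of $\rho_R$'' is correct in the end, but it is not purely a Lipschitz statement; you also need $d(sx_0,sx) = d(x_0,x) \leq R$ (i.e., $x \in B_R(x_0)$) to get the lower bound $t_0 \geq -\alpha$. Worth spelling out if you revise.
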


  \begin{proof}[Sketch of proof]
    We will only discuss the first inequality; 
    the second inequality involves essentially no additional difficulties.
    
    Let us first visualize the generalized geodesics 
    $c := f^{T,R}( \varphi^R_s (x) )$ and $c' := s \cdot f^{T,R}(x)$.
    The generalized geodesic $c$ starts at $c(c_-) = x_0$ and ends at 
    $c(c_+) = \varphi^R_s(x)$. 
    If $d(x_0,sx) \leq R$, then the endpoint $\varphi^R_s(x)$ 
    coincides with $sx$; otherwise we can prolong  $c$ (as a geodesic)
    until it hits $sx$.
    If $T \leq d(x_0, \varphi^R_s (x)$ then $c(0)$ is the unique point on 
    the image of $c$  of distance $T$ from $x_0$, 
    otherwise $c(0) = c(c_+) = \varphi^R_s(x) = \rho_R(sx)$.
    The generalized geodesic $c'$ starts at $c'(c'_-) = s x_0$ and 
    ends at $c'(c'_-) = sx$.
    If $T \leq d(sx_0 ,sx)$, then $c'(0)$ is the unique point  on 
    the image of $c'$  of  distance $T$ from $s x_0$,
    otherwise $c'(0) = c'(c'_+) = sx$.
    We  draw this as 
    \begin{equation*}
     \begin{tikzpicture}
      \clip (-1,-2) rectangle (11.3,1.5);
      \fill [black,opacity=.5] (1,-1) circle (2pt);
      \draw (.5,-1) node {$s x_0$};
      \fill [black,opacity=.5] (0,1) circle (2pt);
      \draw (-.5,1) node {$x_0$};
      \fill [black,opacity=.5] (10,-.5) circle (2pt);
      \draw (10.5,-.5) node {$sx$};
      \fill [black,opacity=.5] (8.78,-.31) circle (2pt);
      \draw (9.1,0) node {$\rho_R(sx)$};
      \fill [black,opacity=.5] (8,-.6) circle (2pt);
      \draw (8,-1) node {$c'(0)$};
      \fill [black,opacity=.5] (7,-.05) circle (2pt);
      \draw (7,.3) node {$c(0)$};
      %\draw[densely dotted, thin] (0,-1) circle (8.5);
      %\draw (9.1,2.8) node {$B_r(x_1)$};
      %\draw[densely dotted, thin] (0,2) circle (8.5); 
      %\draw (9.1,-1.8) node {$B_r(x_2)$};
      \draw[thin, densely dashed] (0,1) -- (10,-.5); 
      %\draw[thin, densely dashed] (0,-1) -- (10,-.5);
      %\begin{scope}
      %  \clip (0,-1) circle (8.5);
      %  \draw[thick] (0,-1) -- (10,-.5);
      %\end{scope}
      \draw[thick] (1,-1) -- (10,-.5); 
      \begin{scope}
        \clip (0,1) circle (8.85);
        \draw[thick] (0,1) -- (10,-.5);
      \end{scope}
     \end{tikzpicture}.
    \end{equation*}
    There are two basic cases to consider.\\[.7ex]
    \emph{Case I:} $d(sx,x_0)$ is small.\\
    Then $\rho_R(sx) = sx$, and both $c$ and $c'$ 
    converge to the constant geodesic at $sx$ with $T \to \infty$.
    Consequently $d_\FS(c,c')$ is small for large $T$.\\[.7ex]
    \emph{Case II:} $d(sx,x_0)$ is large.\\
    Then we may have $\rho_R(sx) \neq sx$. 
    Note that $d( \rho_R(sx), sx) \leq d( x_0, s x_0) \leq \alpha$.
    Let $t := d (c(0), sx) - d( c'(0), sx) \in [-\alpha, \alpha]$.
    Using  the $\CAT(0)$-condition one can then check that 
    $d_\FS ( \phi_t(c), c' )$  will be small provided that 
    $T$, $R - T$,  $\frac{R}{R-T}$ are large. 
  
    A more careful analysis of the two cases shows that
    it is possible to pick $R$ and $T$ (depending only on $\e$)
    such that for any $x$ one of the two cases applies and 
    therefore $d_\FS^\fol(c,c') \leq (\alpha,\e)$.
  \end{proof}

  \begin{remark} \label{rem:cocompact-is-important}
    The assumption that the action of $G$ on the $\CAT(0)$-space $Z$
    is cocompact is important for the proof of 
    Proposition~\ref{prop:CAT(0)-satisfy-B}, because it
    implies that the action of $G$ on the flow space $\FS(Z)$
    is also cocompact.
    This in turn is important for the construction of
    $R$-long covers: Theorem~\ref{thm:ex-long-thin-cover}
    otherwise only allows the construction of $R$-long covers
    for a cocompact subspace of the flow space.

    Nevertheless, there are situations where it is possible to 
    construct $R$-long 
    covers for flow spaces that are not cocompact.
    For example $\GL_n(\IZ)$ acts properly and isometrically but not
    cocompactly  on a $\CAT(0)$ space.
    But it is possible to construct $R$-long covers for the
    corresponding flow space~\cite{Bartels-Lueck-Reich-Rueping(2012)}.
    This uses as an additional input a construction of 
    Grayson~\cite{Grayson(1984)} and enforces a larger
    family of isotropy groups for the cover.
    This is the family $\calf_{n-1}$ mentioned in 
    Remark~\ref{rem:induction-GL_nZ}.   

    There are very general results of 
    Farrell-Jones~\cite{Farrell-Jones(1998)} 
    without a cocompactness assumption, but
    I have no good understanding of these methods.
  \end{remark}

  \section{$\IZ^n \rtimes \IZ$ as a Farrell-Hsiang group}
     \label{sec:Z^nxZ-is-Farrell-Hsiang}

  For $A \in \GL_n(\IZ)$ let $\IZ^n \rtimes_A \IZ$
  be the corresponding semi-direct product.
  We fix a generator $t \in \IZ$. 
  Then for $v \in \IZ^n$ we have $t \cdot v t^{-1} = Av$
  in $\IZ^n \rtimes_A \IZ$.
  The goal of this section is to outline a proof of the
  following fact from~\cite{Bartels-Farrell-Lueck(2011cocomlat)}.
  Recall that $\Ab$ denotes the family of abelian subgroups.
  In the case of $\IZ^n \rtimes_A \IZ$ these are all
  finitely generated free abelian. 

  \begin{proposition}
    \label{prop:IZ^nxIZ-is-Farrell-Hsiang}
    Suppose that no eigenvalue of $A$ over $\IC$ is a root of unity. 
    Then the group $\IZ^n \rtimes_A \IZ$ is a Farrell-Hsiang group
    with respect to the family $\Ab$ of abelian groups, i.e.,
    there is $N$  such that for any $\e > 0$
    there are
    \begin{enumerate}
    \item a group homomorphism $\pi \colon \IZ^n \rtimes_A \IZ \to F$ 
       where $F$ is finite,
    \item \label{prop:IZ^nxIZ-is-Farrell-Hsiang:complex}
      a simplicial $(\IZ^n \rtimes_A \IZ,\Ab)$-complex 
      $E$ of dimension at most $N$ 
    \item \label{prop:IZ^nxIZ-is-Farrell-Hsiang:map}
      a map $f \colon \coprod_{H \in \calh(F)} 
                \IZ^n \rtimes_A \IZ / \pi^{-1}(H) \to E$
      that is $\IZ^n \rtimes_A \IZ$-equivariant up to $\e$:
      $d^1(f(s x), s \cdot f(x)) \leq \e$ for all 
      $s \in S$, $x \in \coprod_{H \in \calh(F)} G/\pi^{-1}(H)$.  
    \end{enumerate}
    Here $S$ is any fixed generating set for $G$.
  \end{proposition}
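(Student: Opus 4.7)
The plan is the classical Farrell-Hsiang recipe: produce $\pi\colon G \twoheadrightarrow F$ from a congruence quotient, produce $E$ as a simplicial model of the affine $G$-action on $\IR^n$, and use the no-root-of-unity hypothesis to control both the isotropy of $E$ and the hyperelementary subgroups of $F$.

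Given $\e > 0$, fix a large integer $M$ (depending on $\e$) and let $k$ be the order of the reduction $\bar A$ in $\GL_n(\IZ/M)$. Set $F := (\IZ/M)^n \rtimes_{\bar A} \IZ/k$ with $\pi \colon G \twoheadrightarrow F$ the canonical surjection; then $\ker \pi = M\IZ^n \rtimes k\IZ$ by the choice of $k$. For $E$, take a sufficiently fine $G$-invariant simplicial model of $\IR^n$ equipped with the affine $G$-action ($\IZ^n$ translating, $t$ acting by $A$); with respect to the $l^1$-metric of Convention~\ref{conv:l1-metric} this action is simplicial, hence $d^1$-isometric. The eigenvalue hypothesis guarantees that $I - A^j$ is invertible over $\IQ$ for every $j \neq 0$, so the isotropy at any $x \in \IR^n$ equals the cyclic group $\{v t^j \mid v = (I - A^j)x \in \IZ^n\}$; consequently $E$ is a simplicial $(G, \Ab)$-complex of dimension $n$.

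For each hyperelementary $H \in \calh(F)$, construct the restriction $f_H \colon G/\pi^{-1}(H) \to E$ by choosing a spanning tree $T_H$ of the Schreier graph of $G$ on $G/\pi^{-1}(H)$ with respect to $S$, selecting a point $y_H \in E$, and setting $f_H(g \pi^{-1}(H)) := w_{T_H}(g) \cdot y_H$, where $w_{T_H}(g)$ labels the unique $T_H$-path from the trivial coset to $g\pi^{-1}(H)$. Since $G$ acts by $d^1$-isometries on $E$, the $\e$-equivariance of $f_H$ reduces to the finite check that each of the Schreier generators $h \in \pi^{-1}(H)$ (determined by non-tree edges of $T_H$) satisfies $d^1(h y_H, y_H) \leq \e$. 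Writing the hyperelementary extension as $C \rightarrowtail H \twoheadrightarrow P$ with $|P|$ a prime power, split into cases according to whether the image of $H$ in $\IZ/k$ has prime-power order. In the non-prime-power case, $C$ surjects onto a non-prime-power cyclic subgroup of $\IZ/k$, yielding an element $t^m v \in \pi^{-1}(H)$ with $m \neq 0$; place $y_H$ at the unique fixed point $(I - A^m)^{-1} v \in \IR^n$ of the affine map $x \mapsto A^m x + v$, whose existence and uniqueness is guaranteed by invertibility of $A^m - I$. In the prime-power case the relevant Schreier generators are translations by a sublattice of $\IZ^n$, and $y_H$ cannot be an exact fixed point; one must instead pass $E$ through a quotient by an $A$-invariant sublattice commensurable with $M\IZ^n$, chosen so that these translations vanish in the quotient while the cyclic isotropy conclusion is preserved.

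The chief technical difficulty, in my view, is harmonizing the two cases using a single $E$ and a single $M$: the no-root-of-unity hypothesis does the heavy lifting twice, through invertibility of $A^m - I$ (giving unique affine fixed points in the non-prime-power case) and through control of $|\det(A^m - I)|$ (bounding lattice commensurabilities in the prime-power case). Tuning $M$ so that both cases succeed simultaneously, and dovetailing the choice of spanning tree $T_H$ with the placement of $y_H$ so that every Schreier generator moves $y_H$ by at most $\e$, is the main obstacle I would expect to require careful arithmetic bookkeeping.
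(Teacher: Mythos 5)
There are several genuine gaps in this proposal, the most serious being that your target complex $E$ does not exist. You propose to take ``a sufficiently fine $G$-invariant simplicial model of $\IR^n$ equipped with the affine $G$-action'' and assert that $G$ then acts simplicially, hence by $d^1$-isometries. But $t$ acts by $A$, and when no eigenvalue of $A$ is a root of unity there is no locally finite simplicial structure on $\IR^n$ on which $A$ acts simplicially: a vertex set invariant under translation by $\IZ^n$ and under $A$ cannot be discrete, since for such $A$ the orbits accumulate (if some eigenvalue has modulus $\neq 1$, the orbit of any nonzero vertex accumulates at $0$ or escapes; if all eigenvalues lie on the unit circle without being roots of unity, orbits are dense on invariant tori). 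This is precisely why the paper does \emph{not} take $E$ to be a model of $\IR^n$ with the affine action. The paper's crucial idea, borrowed from Farrell--Jones, is to pass instead to $\IR^n \times \IR$ with the $G$-action $vt^k\cdot(x,\xi) = (v + A^k x, k+\xi)$ together with the \emph{flow} $\phi_\tau(x,\xi) = (x,\tau + \xi)$, and to take $E$ to be the nerve of a long thin cover of $\IR^n \times \IR$ produced by Theorem~\ref{thm:ex-long-thin-cover}. The long thin cover has uniformly bounded dimension and cyclic isotropy, and the resulting map $F_1$ is contracting in the flow ($\IZ$-)direction but expanding transversally; the expansion is then beaten by precomposing with the genuine contraction $F_0 \colon vt^k \mapsto (v/l, k)$ available once one has reduced to $\bar H = (l\IZ)^n \rtimes_A \IZ$. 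Your proposal has no substitute for this flow argument, and none of the spanning-tree constructions you sketch can supply one.

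Two further problems. First, your $\e$-equivariance argument is wrong: controlling the Schreier generators at a single point $y_H$ does not control $d^1(f(sx), s\cdot f(x))$ at \emph{all} cosets $x$, since the error accumulates as one walks through the spanning tree. The paper instead constructs maps that are \emph{exactly} $\bar H$-equivariant for suitable finite-index subgroups $\bar H$ (Propositions~\ref{prop:eps-maps-Z} and~\ref{prop:eps-maps-Z^n}), and obtains $\e$-equivariance by inducing these up to $G$ and contracting. Second, your choice of finite quotient ($M$ a single large modulus) and your dichotomy (whether the image of $H$ in $\IZ/k$ has prime-power order) do not match the structure the argument needs. The paper takes $s = p_1 p_2$ a product of \emph{two} primes, chosen by Dirichlet to satisfy $p_i \equiv 1 \bmod K$ where $K$ is a product of indices $[\IZ^n : (\id - A^j)\IZ^n]$; the two primes give the freedom, via Lemma~\ref{lem:hyp-elm-subgr} and Swan-style arithmetic on the hyperelementary extension $C \rightarrowtail H \twoheadrightarrow P$, to always find a prime $q \in \{p_1,p_2\}$ with either $\pi^{-1}(H) \cap \IZ^n \subseteq (q\IZ)^n$ or the image of $\pi^{-1}(H)$ in $\IZ$ contained in $q\IZ$, and the congruence $l \equiv 1 \bmod [\IZ^n : (\id - A^k)\IZ^n]$ is exactly what is needed in Lemma~\ref{lem:barH} to subconjugate $\pi^{-1}(H)$ into $(l\IZ)^n \rtimes_A \IZ$. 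Your proposal has no analogue of this arithmetic reduction and no mechanism to subconjugate the preimages into the standard subgroups for which the contracting maps can be built.

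In short: the overall skeleton (congruence quotient, isotropy in $\Ab$ via invertibility of $I-A^j$, case split on hyperelementary structure) points in the right direction, but the specific constructions do not work, and the two key technical ingredients of the paper's proof --- the flow/long-thin-cover argument in the $\IZ^n$-direction, and the Dirichlet/Swan arithmetic on hyperelementary subgroups --- are absent.
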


  \begin{remark}
    The Farrell-Jones Conjecture holds for abelian groups.
    Thus using Theorem~\ref{thm:Farrell-Hsiang-groups} and the
    transitivity principle~\ref{prop:transitivity} we deduce from
    Proposition~\ref{prop:IZ^nxIZ-is-Farrell-Hsiang}
    that the Farrell-Jones Conjecture holds for the 
    group $\IZ^n \rtimes_A \IZ$ from 
    Proposition~\ref{prop:IZ^nxIZ-is-Farrell-Hsiang}.
  \end{remark}
  
  \subsection*{Finite quotients of $\IZ^n \rtimes_A \IZ$.}
  
  We write $\IZ / s$ for the quotient ring (and underlying cyclic group) 
  $\IZ / s \IZ$.
  Let $A_s$ denote the image of $A$ in $\GL_n( \IZ / s)$.
  Choose $r, s \in \IN$ such that the order $|A_s|$ of  $A_s$ 
  in $\GL_n( \IZ / s)$ divides $r$.
  Then we can form $(\IZ/s)^n \rtimes_{A_s} \IZ/r$ and there is 
  canonical surjective group homomorphism
  \begin{equation*}
    \pi \colon \IZ^n \rtimes_{A} \IZ \twoheadrightarrow  
         (\IZ/s)^n \rtimes_{A_s} \IZ/r.
  \end{equation*}

  \subsection*{Hyper-elementary subgroups of 
                                       $(\IZ/s)^n \rtimes_{A_s} \IZ/r$.}
  
  \begin{lemma}
    \label{lem:hyp-elm-subgr}
    Let $s = p_1 \cdot p_2$ be the product of two primes.
    Let $r := s \cdot |A_s|$.
    If $H$ is a hyper-elementary subgroup of 
    $(\IZ / s)^n \rtimes_{A_s} \IZ / r$
    then there is $q \in \{ p_1, p_2 \}$ such that
    \begin{enumerate}
    \item \label{lem:hyp-elm-subgr:Z^n}
        $\pi^{-1}(H) \cap \IZ^n \subseteq (q\IZ)^n$  
        or
    \item \label{lem:hyp-elm-subgr:Z} the image of $\pi^{-1}(H)$ under 
      $\IZ^n \rtimes_{A_s} \IZ \twoheadrightarrow \IZ$
      is contained in $q \IZ$. 
    \end{enumerate}
  \end{lemma}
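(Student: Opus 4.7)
The plan is to translate the two alternatives via the Chinese Remainder decomposition and then argue by contradiction using the hyper-elementary structure of $H$ together with the cyclicity of its image in $\IZ/r$. I will write $N := (\IZ/s)^n = N_1 \oplus N_2$ with $N_i := (\IZ/p_i)^n$, set $J := H \cap N$ with $p_i$-primary parts $J_i \le N_i$, and let $\bar H \le \IZ/r$ denote the image of $H$. Then (a) with $q = p_i$ is equivalent to $J_i = 0$, and (b) with $q = p_i$ is equivalent to $\bar H \subseteq p_i(\IZ/r)$, i.e.\ to the image of $\bar H$ in $\IZ/p_i$ being trivial. Assuming for contradiction that neither alternative holds for either prime, we get $J_1, J_2 \ne 0$ and $\bar H$ surjecting onto both $\IZ/p_1$ and $\IZ/p_2$, so $p_1 p_2 \mid |\bar H|$. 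Because $\bar H$ is cyclic, it then contains the unique subgroup of $\IZ/r$ of order $p_1 p_2$, namely the kernel $\langle |A_s| \rangle$ of the action map $\IZ/r \to \GL_n(\IZ/s)$, $1 \mapsto A_s$.

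I then invoke the Schur--Zassenhaus splitting $H = C \rtimes P$, with $C$ cyclic of order coprime to the prime $\ell$ and $P$ an $\ell$-group, and split on $\ell$. If $\ell \notin \{p_1, p_2\}$, then $J$ has $\ell'$-order, hence $J \subseteq C$; being cyclic in $N_1 \oplus N_2$ with both primary parts nonzero, $|J| = p_1 p_2$. The quotient $\tilde C := C/J$ is the $\ell'$-Hall of $\bar H$ and so contains $\langle |A_s|\rangle$, so the preimage $C' \subseteq C$ of $\langle |A_s|\rangle$ is cyclic of order $(p_1 p_2)^2$. Writing a generator as $c = (v,k) \in N \rtimes \IZ/r$, the condition $k \in \langle |A_s|\rangle$ forces $A_s^k = I$, so the group law collapses to $(v,k)^j = (jv, jk)$; the order of $c$ then divides $\mathrm{lcm}(\mathrm{ord}_N(v), \mathrm{ord}_{\IZ/r}(k)) \mid p_1 p_2$, contradicting $|C'| = (p_1 p_2)^2$. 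If instead $\ell \in \{p_1, p_2\}$, say $\ell = p_1$, then $J_2 \subseteq C$ is cyclic of order $p_2$, $\tilde C = C/J_2$, and $p_2 \mid |\tilde C|$ forces $p_2^2 \mid |C|$. The unique cyclic subgroup $C_2' \le C$ of order $p_2^2$ contains $J_2$, hence $C_2' \cap N = J_2$ and the image of $C_2'$ in $\IZ/r$ is the unique order-$p_2$ subgroup $\langle p_1 |A_s|\rangle$. Since $A_s^{p_1 |A_s|} = (A_s^{|A_s|})^{p_1} = I$, this image acts trivially on $N$, and the same calculation bounds the order of a generator of $C_2'$ by a divisor of $\mathrm{lcm}(p_1 p_2, p_2) = p_1 p_2$, contradicting $|C_2'| = p_2^2$ because $p_2^2 \nmid p_1 p_2$. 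The case $\ell = p_2$ is symmetric.

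The main obstacle is the combinatorial bookkeeping: one must correctly identify the primary decomposition $J = J_1 \oplus J_2$ and the Hall decomposition $\bar H = \tilde C \times \tilde P$, and verify that the chosen preimage cyclic subgroup of $C$ indeed lands inside the trivially acting kernel of $\IZ/r \to \GL_n(\IZ/s)$. Once this is in place, the contradiction is clean: the group law on the preimage reduces to addition on a product of $N$ with a cyclic group acting trivially, inside which element orders can only divide $p_1 p_2$ and so cannot attain the required $(p_1 p_2)^2$ or $p_2^2$.
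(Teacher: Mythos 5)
Your proof is correct, and it takes a genuinely different route from the paper's. The paper writes $C\rightarrowtail H\twoheadrightarrow P$ with $|C|$ prime to $p$, and then cases on whether $C\cap(\IZ/s)^n$ is trivial: in the trivial case it reads off alternative~\ref{lem:hyp-elm-subgr:Z^n} directly, and in the nontrivial case it invokes Lemma~\ref{lem:prime_power} as a black box (cited from~\cite{Bartels-Farrell-Lueck(2011cocomlat)}, stated for general $s$, and reused in the poly-cyclic argument there) to produce a prime power $q^N$ missing from $|\pr(C)|$, which yields~\ref{lem:hyp-elm-subgr:Z} via the index of $\pr(H)$. You instead argue directly by contradiction, case on whether the prime $\ell$ with $H/C$ an $\ell$-group lies in $\{p_1,p_2\}$, and inline the order computation that is the content of Lemma~\ref{lem:prime_power}: once the image of a cyclic subgroup of $C$ lands inside the kernel $\langle |A_s|\rangle$ of $\IZ/r\to\GL_n(\IZ/s)$, the semidirect-product group law degenerates to $(v,k)^j=(jv,jk)$, so element orders divide $\mathrm{lcm}(\exp N,\ \mathrm{ord}\,k)$, which is too small to support the cyclic subgroup of $C$ that the contradiction hypothesis forces to exist. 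What the paper's route buys is modularity (Lemma~\ref{lem:prime_power} is stated once, in full generality, and reused); what yours buys is that the mechanism is visible on the page rather than hidden in a citation, and the case split on $\ell$ makes it transparent how the two alternatives~\ref{lem:hyp-elm-subgr:Z^n} and~\ref{lem:hyp-elm-subgr:Z} trade off against the size of the $\ell'$-Hall subgroup of $\bar H$. One small remark: both arguments implicitly require $p_1\neq p_2$ (yours at the final divisibility contradiction $p_2^2\nmid p_1p_2$, the paper's when choosing ``the prime from $\{p_1,p_2\}$ different from $p$''); this is harmless because the Dirichlet argument in the application picks distinct primes, but it is worth being aware that ``product of two primes'' in the statement means two distinct primes.
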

  
  To prove Lemma~\ref{lem:hyp-elm-subgr} we 
  recall~\cite[Lem.~3.20]{Bartels-Farrell-Lueck(2011cocomlat)}.

  \begin{lemma}    \label{lem:prime_power}
    Let $s$ be any natural number.  
    Let $r := s \cdot |A_s|$.
    Let $C$ be a cyclic subgroup of ${\IZ/s}^n \rtimes_{A_s} \IZ/r$ 
    that has nontrivial intersection with $(\IZ/s)^n$.

    Then there is a prime power $q^N$ ($N \geq 1$) such that
    \begin{itemize}
     \item $q^N$ divides $r = r's$,
     \item $q^N$ does not divide the order of the image of $C$ in $\IZ/r$,
     \item $q$ divides the order of $C \cap (\IZ/s)^n$.
    \end{itemize}
  \end{lemma}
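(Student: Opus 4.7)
The plan is to rephrase the conclusion in terms of $q$-adic valuations (writing $v_q(\cdot)$ for the $q$-adic valuation of a positive integer) and then reach a contradiction by examining a single Sylow subgroup of $C$. Set $k := |C \cap (\IZ/s)^n|$ and let $\mu$ be the order of the image of $C$ in $\IZ/r$; because $C$ is cyclic, $k \mid s$, $\mu \mid r$, and $|C| = k\mu$. Finding a prime power $q^N$ as in the statement is equivalent to finding a prime $q \mid k$ with $v_q(\mu) < v_q(r)$, since then $N := v_q(\mu) + 1$ satisfies all three bullet points at once. So the lemma reduces to this single existence claim.

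To establish it I would argue by contradiction: assume $v_q(\mu) = v_q(r)$ for every prime $q$ dividing $k$, fix such a $q$, and set $f := v_q(s) \geq 1$ and $g := v_q(|A_s|)$, so that $v_q(r) = f+g$. Passing to the $q$-Sylow $C_q \leq C$ produces a cyclic group of order $q^{v_q(k)+f+g}$ whose image in $\IZ/r$ is, under the assumption, the entire $q$-Sylow $P_q$ of order $q^{f+g}$. Choose a generator $\tilde c = (v', t')$ of $C_q$ with $t'$ generating $P_q$. A short gcd computation based on $r = s|A_s|$ and the fact that both $s/q^f$ and $|A_s|/q^g$ are coprime to $q$ gives $\gcd(|A_s|, t') = |A_s|/q^g$, so that $A_s^{t'}$ has order exactly $q^g$ in $\GL_n(\IZ/s)$.

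The key step is then to telescope the formula for $\tilde c^{q^{f+g}}$:
\begin{equation*}
  \tilde c^{q^{f+g}}
   \;=\; \Bigl(\sum_{i=0}^{q^{f+g}-1} A_s^{it'} v',\,0\Bigr)
   \;=\; \Bigl(q^f \sum_{i=0}^{q^g-1} A_s^{it'} v',\,0\Bigr)
   \;=:\; (q^f u',\,0),
\end{equation*}
where the middle equality uses that $A_s^{t'}$ has order $q^g$, so the sum decomposes into $q^f$ identical blocks. Under the CRT splitting $\IZ/s \cong \IZ/q^f \oplus \IZ/(s/q^f)$, applied coordinatewise to $(\IZ/s)^n$, multiplication by $q^f$ kills the $\IZ/q^f$-factor, so $q^f u'$ has order dividing $s/q^f$ and hence coprime to $q$. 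On the other hand $\langle q^f u' \rangle = C_q \cap (\IZ/s)^n$, which is the $q$-Sylow of $K := C \cap (\IZ/s)^n$ and so cyclic of order $q^{v_q(k)} \geq q$ — a contradiction. This delivers the prime $q$ with $v_q(\mu) < v_q(r)$ and completes the proof.

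I expect the main obstacle to be the careful $q$-adic bookkeeping, especially the verification that $A_s^{t'}$ has order exactly $q^g$: it is this maximality that forces the telescoped sum into $q^f \cdot (\IZ/s)^n$ and destroys the $q$-torsion that would otherwise have to survive. Everything else is essentially a sequence of CRT manipulations once the order of $A_s^{t'}$ has been pinned down.
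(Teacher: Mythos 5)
The paper does not actually supply a proof of this lemma: it is quoted and attributed to [Lem.~3.20] of Bartels--Farrell--L\"uck, ``The Farrell--Jones Conjecture for cocompact lattices in virtually connected Lie groups,'' so there is no in-paper argument to compare yours against. On its own terms your proof is correct. The reduction to ``find a prime $q\mid k:=|C\cap(\IZ/s)^n|$ with $v_q(\mu)<v_q(r)$'' is exactly right (using $\mu\mid r$ to turn the negation into an equality $v_q(\mu)=v_q(r)$, and $k\mid s$ since $(\IZ/s)^n$ has exponent $s$). Under that negated hypothesis the $q$-Sylow $C_q$ of $C$ has order $q^{v_q(k)+f+g}$, its image in $\IZ/r$ is all of $P_q$, and for a generator $\tilde c=(v',t')$ one indeed gets $\gcd(|A_s|,t')=|A_s|/q^g$ since $t'=m\cdot r/q^{f+g}$ with $\gcd(m,q)=1$; hence $A_s^{t'}$ has order $q^g$. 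The telescoping identity then places $\tilde c^{\,q^{f+g}}$ in $q^f\cdot(\IZ/s)^n\cong(\IZ/(s/q^f))^n$, which has exponent prime to $q$, while $\tilde c^{\,q^{f+g}}$ generates $C_q\cap(\IZ/s)^n$, the $q$-Sylow of $C\cap(\IZ/s)^n$, of order $q^{v_q(k)}\geq q$. Contradiction. One small observation: you only need that the order of $A_s^{t'}$ \emph{divides} $q^g$, so that $(A_s^{t'})^{q^g}=I$ and the telescoped sum picks up at least a factor $q^f$; the exact equality you establish is true, but the argument is slightly more robust than your framing (``maximality that forces...'') suggests.
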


  \begin{proof}[Proof of Lemma~\ref{lem:hyp-elm-subgr}]
    Let $H \subseteq (\IZ/s)^n \rtimes_{A_s} \IZ/r$ be hyper-elementary.
    There is a short exact sequence 
    $C \rightarrowtail H \twoheadrightarrow P$ 
    with $P$ a $p$-group and $C$ a cyclic group.
    The cyclic group $C$ can always be arranged to be of order prime
    to $p$.
    \begin{equation*}
      \xymatrix{
         (\IZ / s)^n \ar@{>->}[rr] & &
         (\IZ/s)^n \rtimes_{A_s} \IZ/r \ar@{->>}[rr]^{\pr} & &
         \IZ / r \\
         H \cap (\IZ / s)^n \ar@{>->}[rr] \ar@{>->}[u] & &
         H \ar@{->>}[rr]^{\pr} \ar@{>->}[u] & &
         \pr(H) \ar@{>->}[u] \\
         C \cap (\IZ / s)^n \ar@{>->}[rr] \ar@{>->}[u] & &
         C \ar@{->>}[rr]^{\pr} \ar@{>->}[u] & &
         \pr(C)  \ar@{>->}[u]
      } 
    \end{equation*}
    There are two cases.\\[.7ex]
    \emph{Case I:} $C \cap (\IZ/s)^n$ is trivial.\\
    Then $H \cap (\IZ/s)^n$ is a $p$-group.
    Let $q$ be the prime from $\{ p_1, p_2 \}$ that
    is different from $p$.
    Then~\ref{lem:hyp-elm-subgr:Z^n} will hold. 
    \\[.7ex]
    \emph{Case II:} $C \cap (\IZ/s)^n$ is nontrivial.\\
    Then there is a prime $q$ as in Lemma~\ref{lem:prime_power}.
    As $q$ divides $| C \cap (\IZ / s)^n |$ we have $q \in \{p_1,p_2\}$
    and $q \neq p$.
    It follows that $q$ divides $[\IZ / r : \pr(H)]$. 
    This implies~\ref{lem:hyp-elm-subgr:Z}. 
  \end{proof}
  
  \subsection*{Contracting maps}
  
  \begin{example}
    \label{ex:contracting-to-R^n}
    Consider the standard action of $\IZ^n$ on $\IR^n$.
    Let $\bar H := (l\IZ)^n \subseteq \IZ^n$ and
    $\varphi \colon \bar H \to \IZ^n$ be the isomorphism
    $v \mapsto \frac{v}{l}$. 
    Let $\res_{\varphi} \IR^n$ be the $\bar H$-space obtained by
    restricting the action of $\IZ^n$ on $\IR^n$ with $\varphi_l$.
    Then $x \mapsto \frac{x}{l}$ defines an $\bar H$-map
    $F \colon \IZ^n \to \res_{\varphi} \IR^n$.
    This map is contracting.
    In fact by increasing $l$ we can make $F$ as contracting as
    we like, while we can keep the metric on $\IR^n$ fixed.
    
    A variant of this simple construction will be used to 
    produce maps as in~\ref{prop:IZ^nxIZ-is-Farrell-Hsiang:map}
    of Proposition~\ref{prop:IZ^nxIZ-is-Farrell-Hsiang}. 
    This will finish the discussion of  the proof of 
    Proposition~\ref{prop:IZ^nxIZ-is-Farrell-Hsiang}.
  \end{example}

  \begin{proposition}
    \label{prop:eps-maps-Z}
    Let $S \subseteq \IZ^n \rtimes_A \IZ$ be finite.
    For any $\e > 0$ there is $l_0$ such that for all 
    $l \geq l_0$ the following holds.

    Let $\bar H := \IZ^n \rtimes_{A} (l\IZ) 
                   \subseteq \IZ^n \rtimes_A \IZ$.
    Then there is a simplicial $(\IZ^n \rtimes_A \IZ,\Ab)$-complex 
    $E$ of dimension $1$ and an $\bar H$-equivariant map 
    \begin{equation*}
          F \colon \IZ^n \rtimes_A \IZ  \to E
    \end{equation*}
    such that $d^1( F(g), F(h) ) \leq \e$ whenever $g^{-1}h \in S$.
  \end{proposition}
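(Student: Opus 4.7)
The plan is to exploit that $\IZ^n$ is normal in $G := \IZ^n \rtimes_A \IZ$, so the projection $\pi_\IZ \colon G \twoheadrightarrow \IZ$ is a $G$-equivariant map to a $1$-dimensional target. Composing $\pi_\IZ$ with the dilation $k \mapsto k/l$ gives a map to $\IR$ that is as contracting as we like, by taking $l$ large. This is the $\IZ$-direction analogue of the $\IZ^n$-direction construction sketched in Example~\ref{ex:contracting-to-R^n}, with the twist that here the action of $G$ on $E$ factors through a quotient rather than being a restriction to a sub-lattice.

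Concretely, I would take $E := \IR$, equipped with the simplicial structure whose vertex set is $\tfrac{1}{l}\IZ$ and whose $1$-simplices are the intervals $[k/l,(k{+}1)/l]$. Let $G$ act on $E$ through the homomorphism $g \cdot x := x + \pi_\IZ(g)/l$, so that $t$ acts by translation by $1/l$, and in particular $\bar H = \IZ^n \rtimes_A (l\IZ)$ acts by integer translations. Define $F \colon G \to E$ by $F(g) := \pi_\IZ(g)/l$.

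Three verifications remain, each essentially immediate. First, the $G$-action on $E$ is simplicial because $t$ shifts the vertex set by $1/l$, permuting vertices and edges. Second, the isotropy of every point equals $\ker \pi_\IZ = \IZ^n \in \Ab$, so $E$ is a simplicial $(G,\Ab)$-complex of dimension~$1$. Third, because $\pi_\IZ$ is a homomorphism, $F$ is manifestly $G$-equivariant (and therefore \emph{a fortiori} $\bar H$-equivariant); for $g^{-1}h \in S$ one has
\begin{equation*}
  d^1(F(g),F(h)) \;=\; \frac{|\pi_\IZ(g^{-1}h)|}{l} \;\le\; \frac{c}{l},
\end{equation*}
where $c := \max_{s \in S}|\pi_\IZ(s)|$. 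Setting $l_0 := \lceil c/\e \rceil$ then ensures $c/l \le \e$ for all $l \ge l_0$.

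I do not expect any real obstacle here: the $A$-twisting plays no role whatsoever, since one works entirely on the $\IZ$-quotient where it is killed, and $l$ enters only as a dilation factor on a space we completely control. The only mild choice is to refine the vertex set of $\IR$ from $\IZ$ to $\tfrac{1}{l}\IZ$ so that the $G$-action is simplicial while the complex remains $1$-dimensional. It is worth noting that the construction in fact produces a $G$-equivariant $F$, which is strictly stronger than the $\bar H$-equivariance asked for in the statement.
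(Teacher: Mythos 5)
Your construction breaks down at the metric estimate, and the culprit is precisely the "mild choice" of refining the vertex set to $\tfrac{1}{l}\IZ$. By Convention~\ref{conv:l1-metric} the metric $d^1$ on a simplicial complex is the simplicial $l^1$-metric, not the metric inherited from $\IR$. In the $l^1$-metric \emph{any} two distinct vertices $v \neq w$ have $d^1(v,w) = \lvert 1-0\rvert + \lvert 0-1\rvert = 2$, regardless of how close they appear in an ambient Euclidean picture. With your vertex set $\tfrac{1}{l}\IZ$, the image of $F$ is contained in the vertex set, so whenever $\pi_\IZ(g) \neq \pi_\IZ(h)$ (which happens, e.g., for $g^{-1}h = t \in S$) one has $d^1(F(g),F(h)) = 2$, not $\le c/l$. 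The displayed inequality in your argument silently replaces $d^1$ by the Euclidean distance $\lvert \cdot \rvert$, which is exactly what the convention forbids.

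The paper avoids this by keeping the vertex set equal to $\IZ$, so that $F$ lands in $\tfrac{1}{l}\IZ \subset \IR$, which for $l > 1$ consists mostly of non-vertex interior points of $1$-simplices. For two points $x,y \in \IR$ with $\lvert x - y\rvert \le 1$ one checks directly from the definition that $d^1(x,y) \le 2\lvert x-y\rvert$; hence with $c := \max_{s \in S}\lvert\pi_\IZ(s)\rvert$ and $l \ge \max(2c, 2c/\e)$ one gets $d^1(F(g),F(h)) \le 2c/l \le \e$ for $g^{-1}h \in S$. Note the trade-off you have stumbled on: with vertex set $\IZ$, only the contracted $\bar H$-action $(vt^k)\cdot\xi := \tfrac{k}{l}+\xi$ (for $l \mid k$) is simplicial, while the full translation action of $G$ by $\tfrac{k}{l}$ is not. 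Your refinement of the vertex set to $\tfrac{1}{l}\IZ$ was exactly what made the full $G$-action simplicial — and exactly what inflated the $l^1$-metric so that the contraction is lost. This is why the proposition only demands $\bar H$-equivariance of $F$; a genuine $G$-complex is produced later in the proof of Proposition~\ref{prop:IZ^nxIZ-is-Farrell-Hsiang} by passing to the induced complex $G \times_{\bar H} E$. So the extra $G$-equivariance you were pleased to obtain is not a bonus but a red flag: it cannot coexist with the contraction estimate in this construction.
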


  \begin{proof}
    We apply the construction of Example~\ref{ex:contracting-to-R^n}
    to the quotient $\IZ$ of $\IZ^n \rtimes_A \IZ$.
 
    Let $E := \IR$. 
    We use the standard way of making $E = \IR$ a simplicial complex
    in which $\IZ \subseteq \IR$ is the set of vertices.
    Let $\bar H$ act on $E$ via $(vt^k) \cdot \xi := \frac{k}{l} \xi$;
    this is a simplicial action.
    Finally define $F \colon \IZ^n \rtimes_A \IZ  \to E$ by
    $F(vt^k) := \frac{k}{l}$. 
    It is very easy to check that $F$ has the required properties 
    for sufficiently large $l$.
  \end{proof}

  \begin{proposition}
    \label{prop:eps-maps-Z^n}
    Let $S \subseteq \IZ^n \rtimes_A \IZ$ be finite.
    There is $N \in \IN$ depending only on $n$ such
    that for any $\e > 0$ there is $l_0$ such that for all
    $l \geq l_0$ the following holds.

    Let $\bar H :=  (l\IZ)^n \rtimes_A \IZ \subseteq \IZ^n \rtimes_A \IZ$.
    Then  there is   a simplicial $(\IZ^n \rtimes_A \IZ,\Cyc)$-complex 
    $E$ of dimension at most $N$ and an $\bar H$-equivariant map 
    \begin{equation*}
          F \colon \IZ^n \rtimes_A \IZ  \to E
    \end{equation*}
    such that $d^1( F(g), F(h) ) \leq \e$ whenever $g^{-1}h \in S$.
  \end{proposition}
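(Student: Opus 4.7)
The plan is to generalize Example~\ref{ex:contracting-to-R^n} (which handled the abelian case $\IZ^n$) to the semidirect product $G := \IZ^n \rtimes_A \IZ$, by building a simplicial $(G,\Cyc)$-complex that accommodates the $A$-twisting. My candidate for $E$ is $\IR^{n+1}$, viewed as the universal cover of the solvmanifold $M$ defined as the mapping torus of the toral automorphism induced by $A\colon T^n \to T^n$. Concretely, $G$ acts freely, properly, and cocompactly on $\IR^{n+1}$ via
\[
 (v, t^k) \cdot (y, s) := (A^k y + v,\ s + k),
\]
with quotient the compact aspherical $(n+1)$-manifold $M$.

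First I would triangulate $M$ and lift the triangulation to $\IR^{n+1}$; this endows $E := \IR^{n+1}$ with the structure of a simplicial $(G,\Cyc)$-complex of dimension $N := n+1$, and since the action is free, every isotropy group is trivial (hence cyclic), as required. Taking $N := n+1$ depends only on $n$, as the proposition requires.

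Next, to carry out the Farrell-Hsiang rescaling trick from Example~\ref{ex:contracting-to-R^n}, I would pass to the rescaled $G$-action
\[
 (v, t^k) \cdot_l (y, s) := \bigl(A^k y + v/l,\ s + k/l\bigr),
\]
which is conjugate to the original action via the dilation $(y, s) \mapsto (y/l, s/l)$; this preserves the simplicial structure on $E$. Under this rescaled action, the formula $F(v, t^k) := (v/l,\ k/l)$ is checked to be $G$-equivariant (by a direct computation mirroring the one in Example~\ref{ex:contracting-to-R^n} applied to both coordinates), hence in particular $\bar H$-equivariant.

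Finally, the contraction estimate follows, in principle, from $G$-invariance of the simplicial $l^1$-metric under simplicial automorphisms: $d^1(F(g), F(gs)) = d^1\bigl((0,0),\ s \cdot (0,0)\bigr) = d^1\bigl((0,0),\ (v_s/l, k_s/l)\bigr)$, a quantity depending only on $s$. Since $(v_s/l, k_s/l)$ has Euclidean norm $O(\|s\|/l)$, for $l$ large the points lie in a small neighborhood of the origin. The hard part will be this last step: under the rescaling, the fundamental domain shrinks in Euclidean size like $1/l^{n+1}$, so the lifted simplices shrink correspondingly, and the naive ratio of Euclidean displacement to local simplex size remains $O(\|s\|)$ rather than tending to zero. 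Resolving this requires a careful choice of triangulation of $M$ adapted to $l$ (or, equivalently, a choice of $\bar H$-equivariant non-$G$-equivariant map built coset-by-coset using representatives $g_c = (v_c, 1)$ for $c \in (\IZ/l)^n$), so that the local simplicial geometry near the basepoint is controlled uniformly in $l$. The quantitative verification that this yields $d^1 \leq \e$ for $l \geq l_0(\e)$ is the technical heart of the argument and completes the proof of the proposition.
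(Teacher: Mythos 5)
There is a genuine gap, and it is not a technical loose end that a cleverer triangulation of $M$ could patch: your strategy is obstructed at its root. Observe that for any simplicial complex $E$ with a simplicial action and any $\bar H$-equivariant map $F\colon G\to E$ (where $\bar H$ acts through your rescaled action $\cdot_l$, which is conjugate to the original $G$-action on the rescaled triangulation $T_l$ via the dilation $D_l$), one has, since $t\in\bar H$,
\[
  d^1\bigl(F(e),F(t)\bigr)\;=\;d^1\bigl(F(e),\,t\cdot_l F(e)\bigr)\;\ge\;\inf_{x\in E} d^1(x,\,t\cdot_l x).
\]
Because $D_l$ is a simplicial isomorphism from $T_l$ to the original triangulation $T$ conjugating $\cdot_l$ to $\cdot$, and because the $l^1$-metric is intrinsic to the simplicial structure, the infimum on the right equals $\inf_{y} d^1_T(y,\,t\cdot y)$ and is therefore independent of $l$. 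Since the $G$-action on $E=\IR^{n+1}$ is free and proper with cocompact quotient $M$, this displacement function attains a strictly positive minimum $\delta_0 > 0$. Hence $d^1(F(e),F(t))\ge\delta_0$ for every $l$, and you cannot achieve the estimate once $\e<\delta_0$. This is precisely the phenomenon in Example~\ref{ex:Z-on-R}: a free, uniformly finite-dimensional complex cannot absorb displacements in the $\IZ$-direction. The passage from $G$ to $\bar H=(l\IZ)^n\rtimes_A\IZ$ thins out only the $\IZ^n$-factor; the $\IZ$-quotient is untouched, and your rescaling $k\mapsto k/l$ has no group-theoretic source — which is exactly what your honest closing remark about "Euclidean displacement versus local simplex size" is detecting.

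The paper's proof treats the two coordinates asymmetrically. It rescales only the $\IZ^n$-direction: the map $F_0(vt^k)=(v/l,k)$, landing in $\res_\varphi(\IR^n\times\IR)$ where $\varphi\colon\bar H\to G$ is $vt^k\mapsto\tfrac{v}{l}t^k$, is $\bar H$-equivariant and contracts the $\IR^n$-coordinate while leaving the $\IR$-coordinate alone. The missing contraction in the $\IR$-direction then comes from a second, genuinely $G$-equivariant map $F_1\colon\IR^n\times\IR\to E$, where $E$ is the nerve of a long thin cover with respect to the flow $\phi_\tau(x,\xi)=(x,\xi+\tau)$, supplied by Theorem~\ref{thm:ex-long-thin-cover}. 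Two features of that construction are indispensable and have no counterpart in your proposal: the nerve $E$ is a $(G,\Cyc)$-complex rather than a free one, and this is exactly what permits a uniform bound on its dimension along the flow direction (compare Remark~\ref{rem:extend-to-all-of-FS} and Example~\ref{ex:Z-on-R}); and $F_1$ necessarily expands transverse to the flow, an expansion that is then cancelled by the surplus contraction of $F_0$ in the $\IR^n$-coordinate. It is this division of labour — rescaling in $\IZ^n$, long thin flow covers in $\IZ$ — that yields the estimate, and without some equivalent mechanism your proof cannot be completed.
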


  \begin{proof}[Sketch of proof]
    As in the proof of Proposition~\ref{prop:eps-maps-Z}
    we start with the construction from 
    Example~\ref{ex:contracting-to-R^n}, now
    applied to the subgroup $\IZ^n \subseteq \IZ^n \rtimes_A \IZ$.
    However, unlike the quotient $\IZ$, there is no homomorphism
    from  $\IZ^n \rtimes_A \IZ$ to the subgroup and it will be more
    difficult to finish the proof. 

    Let $\IZ^n \rtimes_A \IZ$ act on $\IR^n \x \IR$ via
    $vt^k \cdot (x,\xi) := (v + A^k(x),k + \xi)$.
    Let $\varphi \colon \bar H \to \IZ^n \rtimes_A \IZ$
    be the isomorphism $vt^k \mapsto \frac{v}{l}t^k$.
    The map $F_0 \colon \IZ^n \rtimes_A \IZ \to 
                              \res_{\varphi} \IR^n \x \IR$, 
    $(vt^k) \mapsto (v/l,k)$ is $\bar H$-equivariant and contracting 
    in the $\IZ^n$-direction, but not in the $\IZ$-direction.
    In order to produce a map that is also contracting in
    the $\IZ$-direction we use the flow methods from 
    Section~\ref{sec:flow-spaces}.

    There is $\IZ^n \rtimes_A \IZ$-equivariant flow on 
    $\IR^n \x \IR$ defined
    by $\phi_\tau(x,\xi) = (x,\tau + \xi)$. 
    It is possible 
    to produce a simplicial $(\IZ^n \rtimes_A \IZ,\Cyc)$-complex 
    $E$ of uniformly bounded dimension (depending only on $n$)
    and $\IZ^n \rtimes_A \IZ$-equivariant 
    map $F_1 \colon \IR^n \x \IR \to E$ 
    that is contracting in the  flow direction
    (but expanding in the transversal $\IR^n$-direction).
    To do so one uses Theorem~\ref{thm:ex-long-thin-cover};
    $E$ will be the nerve of a suitable long cover of $\IR^n \x \IR$.

    The fact that $F_1$ is expanding in the $\IR^n$-direction can be 
    countered by the contracting property of $F_0$.
    All together, the composition 
    $F_1 \circ F_0 \colon \IZ^n \rtimes_A \IZ \to \res_{\varphi} E$
    has the desired properties. 
  \end{proof}

  \begin{remark}
    As many other things, the idea of using a flow space
    in the proof of Proposition~\ref{prop:eps-maps-Z^n}
    originated in the work of Farrell and 
    Jones~\cite{Farrell-Jones(1988b)}.
    I found this trick very surprising when I first learned about it.
  \end{remark}

  \begin{lemma}
    \label{lem:barH}
    Let $\bar{H}$ be a subgroup of $\IZ^n \rtimes_A \IZ$
    and $l,k \in \IN$ such that 
    \begin{enumerate}
    \item \label{lem:barH:cap} $\bar{H} \cap \IZ^n \subseteq l\IZ$,
    \item \label{lem:barH:k} $\bar{H}$ maps to $k\IZ$ under the projection
       $\IZ^n \rtimes_A \IZ \to \IZ$,
    \item \label{lem:barH:lequiv1}
        the index $[\IZ^n : (\id - A^k)\IZ^n]$ is finite and 
        $l \equiv 1$ modulo $[\IZ^n : (\id - A^k)\IZ^n]$.
    \end{enumerate}
    Then $\bar{H}$ is subconjugated to $(l\IZ)^n  \rtimes_A \IZ$.
  \end{lemma}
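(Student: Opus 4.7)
The strategy is to split $\bar H$ as a semi-direct product and then conjugate by a single element of $\IZ^n$ to push the ``horizontal'' generator into $(l\IZ)^n \rtimes_A \IZ$. I will treat (b) as saying that the image of $\bar H$ under $\pi$ is exactly $k\IZ$ (if the image is a proper subgroup $k'\IZ$ with $k \mid k'$ one should, in the application, shrink $k$ accordingly so that (c) still holds; this does not affect the structure of the argument). Since $k\IZ$ is free abelian, the extension $\bar H \cap \IZ^n \rightarrowtail \bar H \twoheadrightarrow k\IZ$ splits; pick any lift $ut^k \in \bar H$ of the generator. Then $\bar H$ is generated by $\bar H \cap \IZ^n$ together with the single element $ut^k$.

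For $w \in \IZ^n$, conjugation by $w$ fixes every element of $\IZ^n$ and, using $tvt^{-1} = A(v)$, transforms the remaining generator as
\begin{equation*}
  w \cdot (ut^k) \cdot w^{-1} \; = \; \bigl(u + (\id - A^k) w\bigr) \cdot t^k.
\end{equation*}
Combined with (a), the conclusion $w \bar H w^{-1} \subseteq (l\IZ)^n \rtimes_A \IZ$ is thus equivalent to finding $w \in \IZ^n$ with $(\id - A^k) w \equiv -u \pmod l$. In other words, the whole statement reduces to showing that $\id - A^k$ is surjective as an endomorphism of $(\IZ/l)^n$.

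This surjectivity follows directly from hypothesis (c). Set $d := [\IZ^n : (\id - A^k)\IZ^n]$. By definition of $d$ we have $d \cdot \IZ^n \subseteq (\id - A^k)\IZ^n$, so the image of $\id - A^k$ on $(\IZ/l)^n$ contains $d \cdot (\IZ/l)^n$. The congruence $l \equiv 1 \pmod d$ forces $\gcd(l, d) = 1$, so $d$ is a unit in $\IZ/l$ and $d \cdot (\IZ/l)^n = (\IZ/l)^n$. Therefore $\id - A^k$ is bijective on $(\IZ/l)^n$ and the required $w$ exists.

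Picking such a $w$ finishes the argument, since conjugation by $w$ then carries both types of generators of $\bar H$ into $(l\IZ)^n \rtimes_A \IZ$. There is no serious obstacle here: the only substantive step is the translation of the numerical hypothesis (c) into the invertibility of $\id - A^k$ modulo $l$, which is a couple of lines; the rest is formal bookkeeping with the semi-direct product structure. The only delicate point in applying the lemma (rather than proving it) is to arrange $l$ and $k$ so that (c) genuinely reflects the exact image of $\bar H$ in $\IZ$ — and that is where the specific choices of $r$ and $s$ in the preceding finite-quotient setup pay off.
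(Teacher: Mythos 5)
Your proof is correct and follows essentially the same route as the paper's: reduce modulo $l$ (equivalently, use that $\bar H \cap (l\IZ)^n$ is killed by conjugation by elements of $\IZ^n$), pick a lift $ut^k$ of a generator of the image in $\IZ$, and use hypothesis (c) to conclude that $\id - A^k$ is invertible on $(\IZ/l)^n$ so that a suitable conjugating $w \in \IZ^n$ exists. You have merely made two points explicit that the paper leaves to the reader: the standard argument that $d\IZ^n \subseteq (\id-A^k)\IZ^n$ together with $\gcd(l,d)=1$ gives invertibility mod $l$, and the observation that condition (b) is tacitly read as giving the \emph{exact} image of $\bar H$ in $\IZ$.
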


  \begin{proof}
    Consider the image $\bar H_l$ of $\bar H$ under 
    $\IZ^n \rtimes_A \IZ \to (\IZ/l)^n \rtimes_A \IZ$.
    Then \ref{lem:barH:cap} implies that
    the restriction of the projection $(\IZ/l)^n \rtimes_A \IZ \to \IZ$
    to $\bar H_l$ is injective. 
    In particular $\bar H_l$ is cyclic.
    By \ref{lem:barH:k} there is $v \in \IZ^n$ such that 
    $vt^k \in \IZ^n \rtimes_A \IZ$ maps to a generator of $\bar H_l$.
    Assumption \ref{lem:barH:lequiv1} 
    implies that there is $w \in \IZ^n$ such that $v \equiv (\id - A^k)w$
    modulo $(l\IZ)^n$.
    A calculation shows that $w$ conjugates $\bar H$ to a subgroup of 
    $(l\IZ)^n  \rtimes_A \IZ$.
  \end{proof}

  \begin{proof}[Proof of Proposition~\ref{prop:IZ^nxIZ-is-Farrell-Hsiang}]
    Let $L$ be a large number.
    Since $A$ has no roots of unity as eigenvalues,
    the index $i_k := [\IZ^n : (\id - A^k) \IZ^n]$ is
    finite for all $k$.
    Let $K := i_1 \cdot i_2  \cdots  i_L$.
    By a theorem of Dirichlet there are infinitely many primes
    congruent to $1$ modulo $K$.
    Let $s = p_1 \cdot p_2$ be the product of two such primes,
    both $\geq L$,
    and set $r := s \cdot |A_s|$.
    
    We use the group homomorphism 
    $\pi \colon \IZ^n \rtimes_A \IZ \twoheadrightarrow  
         (\IZ/s)^n \rtimes_{A_s} \IZ/r$.
    Because of Lemma~\ref{lem:hyp-elm-subgr} we find for any
    hyper-elementary subgroup $H$ of $(\IZ/s)^n \rtimes_{A_s} \IZ/r$
    an $q \in \{p_1,p_2\}$ such that
    $\pi^{-1}(H) \subseteq \IZ^n \rtimes_A (q \IZ)$
    or $\pi^{-1}(H) \cap \IZ^n \subseteq (q \IZ)^n$.
    In the first case we set $l := q$.
    In the second case we have either
    $\pi^{-1}(H) \subseteq \IZ^n \rtimes_A (l \IZ)$   
    for some $l > L$ or
    we can apply Lemma~\ref{lem:barH} to deduce that (up to conjugation)
    $\pi^{-1}(H) \subseteq (q\IZ)^n \rtimes_A \IZ$
    and we again set $l := q$.

    Therefore it suffices to find  simplicial 
    $(\IZ^n \rtimes_A \IZ,\Ab)$-complexes $E_1$, $E_2$ whose 
    dimension is bounded by a number depending only on $n$ 
    (and not on $l$) and maps 
    \begin{eqnarray*}
      f_1 \colon & \IZ^n \rtimes_A \IZ / (l \IZ)^n \rtimes_A \IZ & \to E_1
      \\
      f_2 \colon & \IZ^n \rtimes_A \IZ / \IZ^n \rtimes_A (l\IZ) & \to E_2
    \end{eqnarray*}
    that are  $G$-equivariant up to $\e$.
    If $f \colon \IZ^n \rtimes_A \IZ \to E$ is the map from
    Proposition~\ref{prop:eps-maps-Z^n}, then we can set
    $E_1 := (\IZ^n \rtimes_A \IZ) \x_{(l \IZ^n) \rtimes_A \IZ} E$
    and define $f_1$ by $f(vt^k) :=  \big( (vt^k) , f((vt^k)^{-1}) \big)$.
    Similarly, we can produce $f_2$ using
    Proposition~\ref{prop:eps-maps-Z}.
  \end{proof}

\ignore{

======================PAPIERKORB=========================

\section{The Farrell-Jones Conjecture}
 
\subsection{Idempotents and units in group rings}

  \begin{definition}
    Let $R$ be a ring and $G$ be a group.
    The group ring $R[G]$ is the free $R$-module with basis $G$.
    Composition in the group induces the multiplication of $R[G]$.    
  \end{definition}

  Elements of $R[G]$ can be written as $r_1 \cdot g_1 + \dots r_n \cdot g_n$,
  with $n \in \IN$, $r_i \in R$, $g_i \in G$.
  For us rings will always be unital and we write $1_R$ for the unit in $R$.
  The unit in the group ring is then $1_{R[G]} = 1_R \cdot \e_G$, where
  $e_G$ is the identity element in $G$. 
  Using the injective ring homomorphism $R \to R[G]$, $r \mapsto r \cdot e_G$ 
  we often view  $R$ as a subring of $R[G]$. 
  This inclusion is split via the augmentation $R[G] \to R$, 
  $r_1 \cdot g_1 + \dots r_n \cdot g_n \mapsto r_1 + \dots + r_n$.  

  \begin{example}
    \begin{enumerate}
    \item For the infinite cyclic group $C = \IZ$ the group ring
      $R[C]$ can be identified with the Laurent polynomial ring
      $R[t,t^{-1}]$.
    \item For the finite cyclic group $C_n$ of order $n$, the group
      ring is a quotient of a polynomial ring, $R[C_n] \cong R[t] / (t^n-1)$.
    \end{enumerate}
  \end{example}
  
  Often the group ring $R[G]$ is in many ways much harder to understand
  than its coefficient ring $R$.
  For example we can ask what the 
  idempotents\footnote{An idempotent $p$ in a ring is an 
                                    element for which $p^2=p$.} 
  and units of $R[G]$ are.
   
  \begin{example}[Idempotents in group rings.]
    \label{ex:idempotents}
    $ $
    \begin{enumerate}
    \item  Every idempotent in $R$ is also an idempotent in $R[G]$, as
       $R$ is a subring of $R[G]$.
    \item If $R$ is an integral domain, then so is $R[C] = R[t,t^{-1}]$.
       In particular, $0_{R[G]}$ and $1_{R[G]}$ are the only idempotents in $R[G]$.
    \item If $g^n = e_G$ and $\frac{1}{n} \in R$, then 
       $\frac{g^1 + g^2 + \dots + g^n}{n}$ is an idempotent in $R$.
    \end{enumerate}
  \end{example}
 
  Zero and the unit are called the trivial idempotents 
  The last example above shows that in general there may be more idempotents
  in $R[G]$ then just the idempotents of $R$.
  Note that our example requires torsion in the group $G$.
  Conjecturally this is necessary in general.
  The \emph{Kaplansky conjecture} states that for a torsion free group $G$ and an 
  integral domain $R$, there are no non-trivial idempotents in $R[G]$.   
  \ABcomm{Reference? How old is this conjecture?}

  \begin{example}[Units in group rings.]
    \label{ex:units} $ $
    \begin{enumerate}
    \item Every unit in $R$ is also a unit in $R[G]$, as
       $R$ is a subring of $R[G]$.
    \item For $g \in G$ the element $g = 1_R \cdot g$ is a unit in
       $R[G]$.
       In particular, $G$ is a subgroup of the group of units
       $R[G]^\x$.
    \item If $R$ is an integral domain, then all units in $R[C] = R[t,t^{-1}]$
       are of the form $u \cdot t^n$ with $u \in R^\x$, $n \in \IZ$.
    \item If $g^5 = e_G$, then $1 - g - g^4 \in R[G]$.
      (Here $(1 - g - g^4)^{-1} = 1 - g^2 - g^3$.)
    \item If $r^n = 0_R$, then for any $g \in G$, $1 - r  \cdot g \in R[G]^\x$.
      (Here $(1 - r\cdot g)^{-1} = 1 + r \cdot g + \dots + r^{n-1} \cdot g^{n-1}$.)  
    \end{enumerate}
  \end{example}
  
  Units of the form $u \cdot g$ with $u \in R$ and $g \in G$ are called 
  canonical units. 
  The last two examples above show that in general there may exist more
  complicated units then just the canonical units.
  Note that in these two examples either the group contained torsion
  or the ring is not an integral domain.
  Conjecturally this is necessary:
  The \emph{Unit conjecture} states that for a torsion free group $G$ and an 
  integral domain $R$, all units in $R[G]$ are canonical.   
  \ABcomm{Reference? How old is this conjecture?}

  The separation of variables is an important theme for group rings.
  In questions about group rings the answer will of course 
  depend on the group and the ring.
  But ideally they do not interact much in the answer.
  A good example is the unit conjeture mentioned above:
  under mild assumptions (no torson in $G$, no zero divisors in $R$) 
  every unit in $R[G]$ can be split in a part coming from $R$ and a
  part coming from $G$.
  
  \subsection{$K_0$ and $K_1$ of a ring.}
  
  Let $R$ be a ring.
  Denote by $M_n(R)$ the  $n \x n$-matrices over $R$ and by
  $\GL_n(R)$ the invertible $n \x n$-matrices over $R$.
  Use $A \mapsto \left( \begin{array}{cc} A &  \\  & 0 \end{array} \right)$
  to define  
  inclusions $M_n(R) \to M_{n+1}(R)$ and  set
  $M(R) := \bigcup_{n \in \IN} M_n(R)$.
  Similarly, use 
  $U \mapsto \left( \begin{array}{cc} U & \\ & 1 \end{array} \right)$ 
  to define inclusions $\GL_n(R) \to \GL_{n+1}(R)$ and set 
  $\GL(R) := \bigcup_{n \in \IN} \GL_n(R)$.
  Then $\GL(R)$ acts by conjugation on 
  $\Idem(M(R)) := \{ P \in M(R) \mid P^2 = P \}$. 
  Block sum 
  $P \oplus Q := \left( \begin{array}{cc} P & \\ & Q \end{array} \right)$   
  defines a commutative semi-group structure on the 
  quotient $\Idem(M(R)) / \GL(R)$.

  \begin{definition}
    Let $R$ be a ring.
    \begin{enumerate}
    \item $K_0(R)$ is the Grothendieck group
      of $(\Idem(M(R)) / \GL(R), \oplus)$.
    \item $K_1(R)$ is the abelianization  
       $\GL(R)_{\ab} := \GL(R) / [\GL(R),\GL(R)]$ of $\GL(R)$.
    \end{enumerate}
  \end{definition}

  \begin{remark} 
    $ $
    \begin{enumerate}
    \item Every idempotent $p \in R = M_1(R)$ determines an 
      element $[p] \in K_0(R)$.
    \item Every unit $u \in R^\x = \GL_1(R)$ determines an
      element $[a] \in K_1(R)$.\\ 
      In particular, for a group ring we obtain a group homomorphism
      $G \to K_1(R[G])$, $g \mapsto [g]$. 
      As $K_1(R[G])$ is abelian this map factors over the abelianzation
      $G_{\ab}$. 
    \end{enumerate}
  \end{remark}

  Thus we can view elements of $K_0(R)$ and $K_1(R)$ as stable versions 
  of idempotents and units in $R$\footnote{Of course $K$-theory is more than
   just stabelization to matrices. Its definition also involves an equivalence
   on idempotent and respectively invertible matrices.}.

  \begin{definition}
    Let $R$ be a ring and $G$ be a group.
    \begin{enumerate}
    \item $\tilde K_0( R[G] )$ is the quotient of $K_0(R[G])$ by
       $K_0(R)$.
    \item $\Wh^R(G)$ is the quotient of $K_1(R[G])$ by the subgroup
      generated by $K_1(R)$ and $G$.
    \end{enumerate}
  \end{definition}
 
  Elements of $\tilde K_0(R[G])$ can be thought of stable idempotents that 
  do not come from $R$.
  Elements of $\Wh^R(G)$ can be thought of as stable non-canonical units.
 
  A ring $R$ is called regular if it is Noetherian and if every finitely generated
  $R$-module has a finite resolution by finitely generated projective $R$-modules. 
  
  The following conjectures can be viewed as $K$-theory analoga of the 
  Kaplansky conjecture and the Unit conjecture.

  \begin{conjecture}
  \label{conj:tilde-K_0-Wh=0}
    Let $R$ be a regular ring and $G$ be a torsion free group.
    Then 
    \begin{enumerate}
    \item $\tilde K_0(R[G]) = 0$;
    \item $\Wh^R(G) = 0$.  
    \end{enumerate}
  \end{conjecture}
   
  I am not sure where and when these conjectures appeared for the first time.
  For $R= \IZ$ they have certainly been studied for a long time.
  In this case we write $\Wh(G) = \Wh^\IZ(G)$.
 
  \begin{remark}
    Conjecture~\ref{conj:tilde-K_0-Wh=0} is very important for the
    classification of manifolds.
    For example, the $s$-cobordism theorem states that in dimension $\geq 5$
    $h$-cobordisms over a closed manifold with fundamental group $G$ 
    are classified by their Whitehead torsio, an element in $\Wh^\IZ(G)$.
    These and other applications of Conjecture~\ref{conj:tilde-K_0-Wh=0}
    are discussed for example in~\cite[Section~1]{Lueck(2009Hangzhou)}. 
  \end{remark}
  
  \subsection{Higher $K$-theory and the assembly map.}
   
  For a ring $R$, Quillen's higher $K$-groups $K_i(R)$, $i \geq 0$
  are defined as the homotopy groups of a spectrum~\cite{Quillen(1973)}.
  There is also a non-connective version of this spectrum and this yields a
  definition of $K$-groups $K_i(R)$ for all $i$.
  Associated to this non-connective spectrum there is a homology theory
  $X \mapsto H_*(X;\bfK_R)$ with the property that $H_i(\pt;\bfK_R) = K_i(R)$.
  For a ring $R$ and a group $G$ there is the Loday~\cite{Loday(1976)} 
  assembly map
  \begin{equation}
    \label{eq:loday-assembly}
      \alpha = \alpha(G,R) \colon H_*(BG; \bfK_R) \to K_*(R[G]).
  \end{equation}
  Hsiang~\cite{Hsiang(1983)} conjectured, that for $R = Z$ and $G$ torsion free 
  $\alpha(G,R) \ox \IQ$ is an isomorphism.
  The cokernel of $\alpha(G,\IZ)$ in degree $0$ is $\tilde K_0(\IZ[G])$.
  In degree $1$ its cokernel is $\Wh(G)$.
  Thus Hsiang's conjecture is in accordance with 
  Conjecture~\ref{conj:tilde-K_0-Wh=0}. 
  In fact, the Farrell-Jones conjecture implies that for $R$ regular and
  $G$ torsion-free $\alpha(G,R)$ is an isomorphism.

  \begin{remark}
    If $R$ is regular, then $K_i(R) = 0$ for $i < 0$.
    In this case $H_0(BG;\bfK_R) = K_0(R)$ and under this identification
    $\alpha \colon H_0(BG;\bfK_R) \to K_0(R[G])$ is $K_0(i)$, where 
    $i \colon R \to R[G]$ is the inclusion.
    As this is inclusion is split injective 
    by the augmentation $\e \colon R[G] \to R$, 
    the assembly map is split injective in degree $0$.

    Still assuming that $R$ is regular we have  
    $H_1(BG;\bfK_R) =  K_1(R)  \oplus G_\ab \ox K_1(R)$.
    If $R$ is in addition commutative, then
    $\alpha \colon H_1(BG;\bfK_R) \to K_1(R[G])$
    is also split injective~\cite[Lemma~1.2]{Lueck-Reich(2005)}. 
  \end{remark}

} %ignore

  %\Addcontentsline{toc<<}{section}{References} 
  %\bibliographystyle{abbrv}
  %\bibliography{dbdef,dbpub,dbpre,dbarthur}

\def\cprime{$'$} \def\polhk#1{\setbox0=\hbox{#1}{\ooalign{\hidewidth
  \lower1.5ex\hbox{`}\hidewidth\crcr\unhbox0}}}

\end{document}